\newtheorem{theorem}{\bf Theorem}[section]
\newtheorem{proposition}[theorem]{\bf Proposition}
\newtheorem{definition1}[theorem]{\bf Definition}
\newtheorem{corollary}[theorem]{\bf Corollary}
\newtheorem{conjecture}[theorem]{\bf Conjecture}
\newtheorem{example1}[theorem]{\bf Example}
\newtheorem{remark1}[theorem]{\bf Remark}
\newtheorem{lemma}[theorem]{\bf Lemma}
\newenvironment{remark}{\begin{remark1}\rm}{\end{remark1}}
\newenvironment{example}{\begin{example1}\rm}{\end{example1}}
\newenvironment{definition}{\begin{definition1}\rm}{\end{definition1}}
\newcommand{\ba}{\begin{array}}
\newcommand{\ea}{\end{array}}
\def \R{{\mathbb R}}
\def \C{{\mathbb C}}
\def \Z{{\mathbb Z}}
\def \x{{\mathbf{x}}}
\def \vv{{\mathbf{v}}}
\def \w{{\mathbf{w}}}
\def \e{{\mathbf{e}}}
\def \S{{\mathcal S}}
\def \Delta{\triangle}
\def \diag{\mathrm{diag}}
\def \supp{\mathrm{supp}}
\def \diag{\mathrm{Diag}}
\def \Re{\mathsf{Re}}
\def \Im{\mathsf{Im}}
\def \sup{\mathrm{sup}}
\def \pos{\mathrm{pos}}
\DeclareMathOperator{\relint}{relint}
\def \x{{\mathbf{x}}}
\def \z{{\mathbf{z}}}
\def \w{{\mathbf{w}}}
\def \a{{\mathbf{a}}}
\def \e{{\mathbf{e}}}
\DeclareMathOperator{\adj}{adj} % adjoint
\DeclareMathOperator{\inter}{int} % interior
\DeclareMathOperator{\GL}{GL} % general linear group
\DeclareMathOperator{\init}{in}
\DeclareMathOperator{\initial}{in} 
\newcommand{\sym}{\mathcal{S}}
\title{Combinatorics and preservation of conically stable polynomials}
\author{Giulia Codenotti}
\address{Giulia Codenotti: Freie Universit\"at Berlin, Arnimallee 3,
  14105 Berlin, Germany}
\email{giulia.codenotti@fu-berlin.de}
\author{Stephan Gardoll}
\author{Thorsten Theobald}
\address{Stephan Gardoll, Thorsten Theobald:
  Goethe-Universit\"at, FB 12 -- Institut f\"ur Mathematik,
  Postfach 11 19 32, 60054 Frankfurt am Main, Germany}
\email{\{gardoll,theobald\}@math.uni-frankfurt.de}
\thanks{An extended abstract of this work was accepted for
presentation at the Workshop Discrete Mathematics Days 2022, 
Santander.}
\date{\today}
\begin{document}

\begin{abstract}
Given a closed, convex cone $K\subseteq \mathbb{R}^n$, a multivariate polynomial $f\in\mathbb{C}[\mathbf{z}]$ is called $K$-stable if the imaginary parts of its roots are not contained in the relative interior of $K$. If $K$ is the non-negative orthant, $K$-stability specializes to the usual notion of stability of polynomials.
 
We develop generalizations of preservation operations and of combinatorial criteria from usual stability towards conic stability. 
A particular focus is on the cone of positive semidefinite matrices
(psd-stability). In particular, we prove the preservation of psd-stability
under a natural generalization of the inversion operator.
Moreover, we give conditions on the support of psd-stable 
polynomials and
characterize the support of special families of psd-stable polynomials.
\end{abstract}

\maketitle

\section{Introduction\label{se:introduction}}

Multivariate stable polynomials can be seen as a generalization of 
real-rooted polynomials,
and they enjoy many connections to other branches in mathematics,
including differential equations \cite{borcea-braenden-2010},
optimization \cite{straszak-vishnoi-2017},
probability theory \cite{bbl-2009}, 
matroid theory \cite{braenden-hpp,cos-2004},
applied algebraic geometry \cite{volcic-2019},
theoretical computer science \cite{mss-interlacing1, mss-interlacing2}
and statistical physics \cite{borcea-braenden-leeyang1}. 
See also the surveys of
Pemantle \cite{pemantle-2012} and Wagner \cite{wagner-2010}.

Classical related notions include \emph{hyperbolic polynomials} \cite{garding-59} 
or stability with respect to an arbitrary domain (see, e.g.,
\cite{gkv-2016} and the references therein).
Recently, further variants and generalizations 
have been developed, including 
\emph{conic stability} introduced by J\"orgens and the third 
author \cite{joergens-theobald-conic},
\emph{Lorentzian polynomials} introduced 
by Br\"and\'{e}n and Huh \cite{lorentzian-braenden-huh}
and \emph{positively hyperbolic varieties} introduced by Rinc\'{o}n, 
Vinzant and Yu \cite{rincon-vinzant-yu}.

In this work we focus on the notion of conic stability.
Given a closed, convex cone $K\subseteq \R^n$, a polynomial $f\in\C[\z]=\C[z_1,\ldots,z_n]$ is called \emph{$K$-stable}, if $\Im(\z)\not\in\relint K$ for every root $\z$ of $f$, 
where $\Im(\z)$ denotes the vector of the imaginary parts of the
components of $\z$ and $\relint K$ denotes the relative interior of $K$.
Note that $(\R_{\geq 0})^n$-stability coincides with the usual stability. In the case of a homogeneous polynomial, $K$-stability of $f$ is 
equivalent to the containment of $\relint K$ in a hyperbolicity cone of $f$. The notion of $K$-Lorentzian polynomials recently introduced by Br\"and\'{e}n and Leake \cite{br-le-21} is, up to scaling, a generalization of homogeneous $K$-stable polynomials. 
Stability with respect to the positive semidefinite cone on the
space of symmetric matrices is denoted as \emph{psd-stability}.
In the homogeneous case such polynomials are also known as
Dirichlet-G{\aa}rding polynomials \cite{harvey-lawson-2009}.
Prominent subclasses of psd-stable 
polynomials arise from determinantal 
representations \cite{dgt-2021}. Blekherman, Kummer, Sanyal
et al.\ \cite{blek-et-al-2021} 
have constructed a family of psd-stable \emph{lpm-polynomials}
(\emph{linear principle minor polynomials}) from multiaffine stable polynomials.

The purpose of the current paper is to initiate the study of generalizations 
of two prominent research directions in stable polynomials towards
conically stable polynomials: preservation operators and
combinatorial criteria. In particular, a focus is to understand
the transition from the classical stability situation 
to the conic stability with respect to 
non-polyhedral cones such as the positive semidefinite cone.

With regard to preservation, stable polynomials
have been recognized to remain stable under a number of operations,
see the survey \cite{wagner-2010}. Prominent examples include
the inversion operation (see \cite{borcea-braenden-leeyang1}),
the preservation under taking partial derivatives (as a consequence
of the univariate Gau{\ss}-Lucas Theorem),
the Lieb-Sokal Lemma (\cite[Lemma 2.3]{lieb-sokal-1981}, see
also \cite[Lemma 2.1]{borcea-braenden-leeyang1})
and the celebrated characterization of Borcea and Br\"and\'{e}n 
of linear operators preserving 
stability \cite[Theorem 1.3]{borcea-braenden-leeyang1}.
Many of the mentioned applications of stability rely on the 
preservation properties.

With regard to \emph{combinatorial criteria}, 
several important combinatorial results have been achieved, 
which provide effective criteria 
for the recognition of stable polynomials. A groundbreaking result
of Choe, Oxley, Sokal and Wagner states 
that the support of a multi-affine, homogeneous and stable 
polynomial $f \in \R[\mathbf{z}] = \R[z_1, \ldots, z_n]$ 
is the set of bases of a matroid
\cite[Theorem 7.1]{cos-2004}.
Br\"and\'{e}n \cite[Theorem 3.2]{braenden-hpp} proved a generalization of this result for the support of any stable polynomial $f \in \R[\mathbf{z}]$, showing that it forms a jump system, i.e., it satisfies the so-called 
Two-Steps Axiom. See Section~\ref{se:prelim} for formal definitions.
Recently, Rinc\'{o}n, Vinzant
and Yu gave an alternative proof of the matroid result, based
on a tropical proof of the auxiliary statement that positive
hyperbolicity of a variety is preserved under passing 
over to the initial form \cite[Corollary 4.9]{rincon-vinzant-yu}.

The proofs of these combinatorial properties strongly rely on the
preservation properties of stable polynomials. These
preservation properties establish the connection between
the combinatorial and the algebraic viewpoint. For example,
taking the partial derivative of a polynomial $f$ shifts the
support vectors of $f$ by a unit vector in a negative coordinate
direction (and some support vectors may disappear). Since
stability of a polynomial is preserved under taking partial
derivatives, one can use this preserver to argue about the
combinatorics of the support. In the univariate case, these
considerations are classical for deriving log-concavity of sequences
with real-rooted generating functions.

\smallskip

\noindent
{\bf Our contributions.}
1. We generalize several preserving operators for usual stability
  to the conic stability. In particular, we derive a conic version
  of the Lieb-Sokal Lemma (see Lemma~\ref{le:lieb-sokal}
  and Corollary~\ref{co:ref-lieb-sokal}).

2. For the case of psd-stability,
  we can prove the preservation under a natural generalization
  of the inversion operator. See Theorem \ref{th:psd-inversion}.
  This generalized inversion operator
  is specific to the case of psd-stability and exhibits a prominent
  role of this class. Furthermore, we show that psd-stable polynomials are preserved under taking initial forms with respect to positive definite matrices. See Theorem~\ref{th:initial-form-W-pd}.

3. \emph{Combinatorics of psd-stable polynomials.}
 We prove a necessary criterion on the support of any psd-stable polynomial
 in Theorem~\ref{th:struc-thm} and characterize the support of special
 families of psd-stable polynomials. In particular, we characterize psd-stability
 of binomials (Theorem~\ref{th:bin-class-dia}), give a necessary 
 criterion for psd-stability of a larger class containing binomials
 (Theorem~\ref{th:hom-non-mix}),
 and introduce a class of polynomials of determinants, which satisfies a
  generalized jump system criterion with regard to psd-stability.
  Theorem~\ref{th:mat-size-poly-of-det} characterizes the restrictive
  structure of psd-stable polynomials of determinants.
  These results are complemented by an additional conjecture on the
  support of general psd-stable polynomials. We provide evidence
  for this conjecture by verifying it for the classes of polynomials treated previously.
  
\medskip

The paper is structured as follows. 
Section~\ref{se:prelim} collects relevant background on preservers of the usual stability notion as well as an introduction to the notion of $K$-stability.

In Section~\ref{se:preservers}, we study preservers of conic stability for 
general and polyhedral cones, including the generalized
version of the Lieb-Sokal Lemma. 
Section~\ref{se:psd-preservers} treats the case of psd-stability,
in particular, the preservation of psd-stability under an inversion operation
and under passing over to certain initial forms.
Section~\ref{se:combinatorics} deals with combinatorial
conditions of psd-stable polynomials.
Therein, Subsections~\ref{se:binomials-non-mixed} 
and~\ref{sse:pol-of-det} discuss the support of
special families of psd-stable polynomials. 
Subsection~\ref{sse:comb-gen-psd} considers the support of general
psd-stable polynomials and also raises a conjecture.

\subsection*{Acknowledgment}
This work was supported through DFG grant TH 1333/7-1. The research was
primarily done while the first author was with Goethe University, supported
by an Early Career Researcher grant of Goethe University.

\section{Preliminaries}\label{se:prelim}

Let $\R_{\geq 0}$ and $\R_{>0}$ denote the sets of non-negative and of
positive real numbers. 
Further, let
$\mathcal{H} := \{z \in \C \, : \, \Im(z) > 0\}$ be the 
\emph{open upper half-plane} of $\C$.
Throughout the text,
bold letters will denote $n$-dimensional vectors unless noted otherwise.

In this section, we collect known properties of stable polynomials and then introduce the generalization of stability, namely conic stability, with which
the paper is concerned. 

\subsection{Stable polynomials}

A polynomial $f\in\C[\z]$ is called \emph{stable} if 
for every root $\z$ of $f$, there exists some $j \in [n]$
with $\Im(z_j)\leq 0$. Hence, a univariate real polynomial $f$ is stable if and
only if it is real-rooted, because the non-real roots of univariate real
polynomials occur in conjugate pairs.
The following collection from \cite[Lemma 2.4]{wagner-2010}
recalls some elementary operations that preserve stability, where
f) can be derived from the Gau{\ss}-Lucas Theorem. Denote by $\deg_i$ the
degree in the variable $z_i$.

\begin{proposition}\label{le:usual-stab-pres}
Let $f \in \C[\mathbf{z}]$ be stable. \\
$\textnormal{a) Permutation}$: $f(z_{\sigma(1)},\ldots,z_{\sigma(n)})$ is stable for every permutation $\sigma:[n]\rightarrow [n]$. \\
$\textnormal{b) Scaling}$: $c\cdot f(a_1z_1,\ldots,a_nz_n)$ is stable or zero for every $c\in\C$ and $\a\in\R^n_{>0}$. \\
$\textnormal{c) Diagonalization}$: $f(\z)\,\rule[-2mm]{0.1mm}{5mm}_{z_j=z_i}\in\C[z_1,\ldots,z_{j-1},z_{j+1}, \ldots,z_n]$ is stable or zero for every $i \neq j\in[n]$. \\
$\textnormal{d) Specialization}$:  $f(b,z_2,\ldots,z_n)\in\C[z_2,\ldots,z_n]$ is stable or zero for every $b\in\C$ with $\Im(b)\geq 0$. \\
$\textnormal{e) Inversion}$: $z_1^{\deg_1(f)}\cdot f(-z_1^{-1},z_2,\ldots,z_n)$ is stable. \\
$\textnormal{f) Differentiation}$: $\partial_j f(\z)$ is stable
or zero for every $j\in [n]$.
\end{proposition}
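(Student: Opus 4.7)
The plan is to verify each of (a)--(f) individually via the defining characterization that $f\in\C[\z]$ is stable iff $f$ has no zero in the open upper polydisc $\mathcal{H}^n$. In every case I would argue by contrapositive: assuming the transformed polynomial vanishes at some $\w\in\mathcal{H}^n$ (or $\mathcal{H}^{n-1}$, as appropriate), I would produce a zero of $f$ itself in the open upper polydisc of the correct dimension, contradicting stability of $f$.

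Parts (a)--(d) reduce to elementary substitution. For (a), a permutation $\sigma$ only relabels coordinates. For (b), the scalar $c$ does not affect the zero set, and $a_i\in\R_{>0}$ preserves the sign of $\Im(z_i)$, so the map $z_i\mapsto a_i z_i$ leaves $\mathcal{H}^n$ invariant. For (c), a zero of $f(\z)\rule[-2mm]{0.1mm}{5mm}_{z_j=z_i}$ in $\mathcal{H}^{n-1}$ lifts to a zero of $f$ in $\mathcal{H}^n$ by setting the $j$-th coordinate equal to the $i$-th. For (d) with $\Im(b)>0$, the specialization is again a direct lift; the boundary case $b\in\R$ is handled by a multivariate Hurwitz argument, namely $f(b+i\varepsilon,z_2,\ldots,z_n)$ is stable for all $\varepsilon>0$ by the previous case, and its limit as $\varepsilon\to 0^+$ is stable or identically zero. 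For (e) the key input is the M\"obius identity $\Im(-1/z_1) = \Im(z_1)/|z_1|^2$, which shows that $z_1\mapsto -1/z_1$ preserves $\mathcal{H}$; a zero of $z_1^{\deg_1 f} f(-z_1^{-1},z_2,\ldots,z_n)$ in $\mathcal{H}^n$ necessarily has $z_1\neq 0$ and transforms to a zero of $f$ in $\mathcal{H}^n$.

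The main work sits in (f) and relies on the univariate Gauss--Lucas theorem; I expect this step to be the principal technical point. Given a hypothetical zero $\w\in\mathcal{H}^n$ of $\partial_j f$, I would freeze the variables $z_i$ for $i\neq j$ at $w_i$ and form the univariate polynomial $g(w) := f(w_1,\ldots,w_{j-1},w,w_{j+1},\ldots,w_n)$. A preliminary check rules out $g\equiv 0$: otherwise $f$ would vanish on the complex line through $\w$ in the $z_j$-direction, which meets $\mathcal{H}^n$ and contradicts stability of $f$. Stability of $f$ then forces every root of $g$ into the closed lower half-plane $\C\setminus\mathcal{H}$, and by Gauss--Lucas the roots of $g'$ lie in the convex hull of those of $g$, hence again in the closed lower half-plane. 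But $g'(w_j)=(\partial_j f)(\w)=0$ with $\Im(w_j)>0$, yielding the desired contradiction.
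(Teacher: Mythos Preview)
The paper does not prove Proposition~\ref{le:usual-stab-pres}; it is quoted from Wagner's survey with only the remark that (f) follows from Gau{\ss}--Lucas. Your argument is the standard one and is essentially correct; in particular your treatment of (f) mirrors the approach the paper itself uses for the conic generalization in Lemma~\ref{le:conic-deriv}.

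One small caveat in (f): you rule out $g\equiv 0$ but not the case that $g$ is a nonzero constant, and in that case Gau{\ss}--Lucas yields no contradiction (there are no roots of $g$ whose convex hull could constrain $g'$). This case can be excluded by observing that the leading $z_j$-coefficient $p_d$ of $f$ is itself a stable polynomial in the remaining variables: apply (e) in the $j$-th slot and then (d) with $z_j=0$ to obtain $(-1)^d p_d(z')$, which is stable or zero and not identically zero since $d=\deg_j f$. Hence $p_d$ does not vanish at $(w_i)_{i\neq j}\in\mathcal{H}^{n-1}$, so $\deg g=\deg_j f\ge 1$ and $g$ is nonconstant. The paper's own proof of Lemma~\ref{le:conic-deriv} elides the analogous point with the one-line justification ``$g$ is not constant, because $\partial_\vv f\not\equiv 0$,'' so your level of detail already matches the paper's.
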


A prominent linear stability preserver is the 
Lieb-Sokal Lemma (\cite[Lemma 2.3]{lieb-sokal-1981}, 
see also \cite[Lemma 2.1]{borcea-braenden-leeyang1} or 
\cite[Lemma 3.2]{wagner-2010}). It is an essential ingredient in
Borcea and Br\"and\'{e}n's full characterization of linear operations preserving
stability \cite[Theorem 1.1]{borcea-braenden-leeyang1}, see also
\cite[Section 3.2]{borcea-braenden-2010}. 

\begin{proposition}[Lieb-Sokal Lemma]\label{le:lieb-sokal}
	Let $g(\z)+yf(\z)\in\C[\z,y]$ be stable and assume $\deg_i(f)\leq 1$. Then $g(\z)-\partial_if(\z)\in\C[\z]$ is stable or identically zero.
\end{proposition}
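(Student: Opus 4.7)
My plan is to prove the lemma via a carefully chosen polynomial substitution that converts the statement into a coefficient-extraction problem, so that the conclusion follows from the preservers already recorded in Proposition~\ref{le:usual-stab-pres}. Writing $f(\z) = A(\z_{\hat{\imath}}) + z_i B(\z_{\hat{\imath}})$ (using $\deg_i f \leq 1$, so that $B = \partial_i f$) and setting $d := \deg_i g$, I will build an auxiliary polynomial $R(\z, y)$ that is stable in $(\z, y)$ and whose coefficient of $y^d$ equals $g - \partial_i f$; extracting this coefficient via partial derivatives in $y$ and specialization at $y = 0$ will then yield the claim.

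The construction of $R$ starts from the substitution $z_i \mapsto z_i - 1/y$ in $h(\z, y) := g(\z) + y f(\z)$. The crucial property of this substitution is that it preserves the open upper half-plane: for any $(\z, y) \in \mathcal{H}^{n+1}$,
\[
\Im\!\left(z_i - \tfrac{1}{y}\right) \;=\; \Im(z_i) + \frac{\Im(y)}{|y|^2} \;>\; 0,
\]
so $h(\z_{\hat{\imath}},\, z_i - 1/y,\, y)$ is nonvanishing on $\mathcal{H}^{n+1}$ as a rational function. Using $\deg_i f \leq 1$, a direct calculation gives $y \cdot f(\z_{\hat{\imath}}, z_i - 1/y) = y f(\z) - B$, and hence
\[
h(\z_{\hat{\imath}}, z_i - 1/y, y) \;=\; g(\z_{\hat{\imath}}, z_i - 1/y) + y f(\z) - B.
\]
Multiplying by $y^d$ clears the negative powers of $y$ introduced by $g(\z_{\hat{\imath}}, z_i - 1/y)$, producing the genuine polynomial
\[
R(\z, y) \;:=\; y^d\, g(\z_{\hat{\imath}}, z_i - 1/y) + y^{d+1} f(\z) - y^d B(\z_{\hat{\imath}}).
\]
Since $y^d$ does not vanish on $\mathcal{H}$, $R$ remains nonvanishing on $\mathcal{H}^{n+1}$ and is therefore stable.

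Expanding $g(\z_{\hat{\imath}}, z_i - 1/y) = \sum_k G_k(\z_{\hat{\imath}})(z_i - 1/y)^k$ and collecting, the $y^0$-part of this series equals $g(\z)$; consequently, the coefficient of $y^d$ in $R$ is exactly $g(\z) - B(\z_{\hat{\imath}}) = g(\z) - \partial_i f(\z)$. Applying $\partial_y$ a total of $d$ times and then specializing $y = 0$ extracts this coefficient, up to the nonzero factor $d!$. Since parts (d) and (f) of Proposition~\ref{le:usual-stab-pres} guarantee that each of these operations preserves stability, I conclude that $g - \partial_i f$ is stable or identically zero.

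The main obstacle, and the only truly creative step, is identifying the substitution $z_i \mapsto z_i - 1/y$. This single move simultaneously preserves the upper half-plane and, exploiting the hypothesis $\deg_i f \leq 1$, produces the summand $-\partial_i f$ as a term independent of both $z_i$ and $y$; without the degree hypothesis, higher powers $(z_i - 1/y)^k$ in $f$ would leave uncleared poles in $y$. Once this substitution is in hand, the polynomial renormalization by $y^d$ and the coefficient extraction are routine applications of the elementary preservers already collected in Proposition~\ref{le:usual-stab-pres}.
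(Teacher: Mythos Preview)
Your argument is correct. The substitution $z_i\mapsto z_i-1/y$ maps $\mathcal H^{n+1}$ into itself, so $R(\z,y)=y^{d}\,h(\z_{\hat\imath},z_i-1/y,y)$ is a genuine polynomial that is nonvanishing on $\mathcal H^{n+1}$; the expansion $(z_i-1/y)^k=\sum_{j}\binom{k}{j}z_i^{k-j}(-1)^jy^{-j}$ shows that the $y^{d}$-coefficient of $R$ is exactly $g(\z)-\partial_i f(\z)$; and applying $\partial_y^{\,d}$ followed by specialization at $y=0$ extracts $d!\,(g-\partial_i f)$ via parts~(d) and~(f) of Proposition~\ref{le:usual-stab-pres}. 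The edge case $g\equiv 0$ (where $d$ is undefined) is harmless: then $h=yf$ is stable, hence $f$ is stable, and $-\partial_i f$ is stable or zero by~(f).

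The paper does not give its own proof of Proposition~\ref{le:lieb-sokal}; it is quoted from the literature. However, the paper \emph{does} prove the conic generalization (Theorem~\ref{le:conic-lieb-sokal}), and specializing that proof to $K=\R_{\ge0}^n$, $\vv=\e_i$ yields a proof of the classical statement that can be compared with yours. Both proofs hinge on the same key move, the substitution $z_i\mapsto z_i-1/y$, but they process it differently. The paper applies the substitution only to the $f$-part, obtains the stable polynomial $yf(\z)-\partial_i f(\z)$, and then combines it with $g$ via the auxiliary characterization $\Im\!\big(g(\z)/f(\z)\big)\ge 0$ of Lemma~\ref{le:precond-ls} (used three times). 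You instead apply the substitution to all of $h=g+yf$, clear denominators with $y^{d}$, and read off $g-\partial_i f$ as a single coefficient. Your route is more self-contained, needing only the elementary preservers of Proposition~\ref{le:usual-stab-pres} and no intermediate lemma; the paper's route via Lemma~\ref{le:precond-ls} is what allows the argument to extend cleanly to directional derivatives along an arbitrary $\vv\in K$, where there is no single variable in which to take a ``$\deg_i$'' and extract a coefficient.
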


The following statement due to Hurwitz allows us to obtain (conic) stability statements
as limit of statements on compact subsets under a uniform convergence
condition.

\begin{proposition}\cite[Par.\ 5.3.4]{Hurwitz-Thm-1999}\label{th:hurwitz}
Let $\{f_k\}$ be a sequence of polynomials non-vanishing in a connected open set $U \subseteq \R^n$, and assume it converges to
a function $f$ uniformly on compact subsets of U. Then $f$ is either non-vanishing on $U$ or it is identically zero.
\end{proposition}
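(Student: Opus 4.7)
The plan is to reduce the multivariate statement to the classical one-variable Hurwitz theorem by restricting to affine complex lines. I interpret $U$ as an open connected subset of $\C^n$; this is the natural setting for the theorem and is the one actually needed for the conic-stability applications later in the paper (on a purely real domain the conclusion is false, as $f_k(x) = x^2 + 1/k$ on $U = \R$ shows).

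I would first recall the one-variable case: if $\{g_k\}$ is a sequence of holomorphic functions on a connected open $V \subseteq \C$, each nowhere vanishing, converging uniformly on compact subsets to $g$, and $g \not\equiv 0$, then $g$ is nowhere zero on $V$. The standard argument is by contradiction: if $g(\zeta_0) = 0$, then by the identity theorem the zeros of $g$ are isolated, so a sufficiently small circle $\gamma$ about $\zeta_0$ lies in $V$ and avoids all other zeros of $g$. On $\gamma$, $|g|$ attains a positive minimum $m > 0$, and by uniform convergence $|g_k - g| < m$ on $\gamma$ for all large $k$. Rouch\'e's theorem then forces $g_k$ and $g$ to have the same number of zeros inside $\gamma$, contradicting that $g_k$ has none.

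For the multivariate reduction, assume $f$ is not identically zero but $f(\z_0) = 0$ for some $\z_0 \in U$, aiming for a contradiction. As a uniform limit on compacta of polynomials, $f$ is holomorphic on $U$, so its zero locus has empty interior; hence there is a direction $\w \in \C^n$ such that $\lambda \mapsto f(\z_0 + \lambda \w)$ is not identically zero in $\lambda$. Set $V := \{\lambda \in \C : \z_0 + \lambda \w \in U\}$ and pass to the connected component of $V$ containing $0$. Define $g_k(\lambda) := f_k(\z_0 + \lambda \w)$ and $g(\lambda) := f(\z_0 + \lambda \w)$. Each $g_k$ is nowhere zero on $V$ since $f_k$ is nowhere zero on $U$, and any compact $L \subset V$ maps to a compact subset $\z_0 + L \cdot \w \subset U$, so $g_k \to g$ uniformly on compact subsets of $V$. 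The one-variable Hurwitz theorem then yields a contradiction with $g(0) = f(\z_0) = 0$.

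The only real obstacle is choosing the auxiliary direction $\w$ so that the line-restriction is both not identically zero in $\lambda$ and stays inside $U$ on an open neighborhood of $0$. Both requirements are handled at once: openness of $U$ makes $V$ an open neighborhood of $0$, and the zero set of a non-identically-zero holomorphic function has empty interior, so such a $\w$ always exists. A minor technical point is that $f$ must itself be holomorphic for the identity theorem to apply, which is immediate from the preservation of holomorphy under uniform convergence on compacta.
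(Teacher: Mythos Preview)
The paper does not prove this proposition at all; it is stated as a classical result with a citation to \cite{Hurwitz-Thm-1999} and used as a black box throughout. So there is no ``paper's own proof'' to compare against.

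Your argument is correct and is the standard reduction of the multivariate Hurwitz theorem to the one-variable case via restriction to a complex affine line. The observation that the statement as literally written (with $U \subseteq \R^n$) is false, together with your counterexample $f_k(x) = x^2 + 1/k$ on $\R$, is a worthwhile catch: the paper's applications (to $\mathcal{H}^n$, the Siegel upper half-space, and $\relint K + i\R^n$) all live in $\C^n$, so your reinterpretation is the intended one. The technical points---holomorphy of the limit, empty interior of the zero set to find a good direction $\w$, and passing to the connected component of the line-preimage containing $0$---are all handled cleanly.
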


As a consequence of~\cite[Theorem 6.1]{cos-2004}, the following 
necessary condition for homogeneous stable polynomials based on their coefficients applies.
\begin{theorem}
  \label{th:same-phase}
  All nonzero coefficients of a homogeneous stable polynomial $f\in\C[\z]$ 
  have the same phase.
\end{theorem}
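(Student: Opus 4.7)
The plan is to derive the statement as a direct corollary of \cite[Theorem~6.1]{cos-2004}, which establishes the same-phase property under the \emph{half-plane property} (non-vanishing of $f$ whenever all variables have positive real part). The first step is to observe that for a homogeneous polynomial $f$ of degree $d$, the half-plane property coincides with stability in the sense of this paper: homogeneity yields $f(i\w) = i^d f(\w)$, and the map $\w \mapsto i\w$ exchanges ``positive real parts'' with ``positive imaginary parts'', so the two non-vanishing conditions translate into one another. Invoking \cite[Theorem~6.1]{cos-2004} then produces the claim directly.

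A self-contained derivation would proceed in parallel. The same homogeneity identity shows that $f$ does not vanish on $\R_{>0}^n$, so $\arg f$ admits a continuous lift $\phi : \R_{>0}^n \to \R$. I would show that $\phi$ is constant via two complementary constraints on its gradient. For any direction $\vv \in \R_{>0}^n$ and any point $\x \in \R_{>0}^n$, the univariate restriction $g(t) = f(\x+t\vv)$ is stable by Proposition~\ref{le:usual-stab-pres}, so its roots $r_j$ all lie in the closed lower half-plane. A Cauchy--Riemann computation identifies
\[
\partial_{\vv}\phi(\x) \;=\; \Im\bigl(g'(0)/g(0)\bigr) \;=\; -\sum_j \Im(r_j^{-1}) \;\le\; 0,
\]
which places $\nabla\phi(\x)$ in the nonpositive orthant. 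On the other hand, Euler's identity applied to the homogeneous $f$ yields $\nabla\phi(\x)\cdot\x = \Im(d) = 0$; combined with $\x \in \R_{>0}^n$, the two constraints force every $\partial_j\phi(\x) = 0$. Hence $\phi \equiv \theta$ is constant on the connected domain $\R_{>0}^n$, so $e^{-i\theta} f$ takes real values on an open subset of $\R^n$ and must have real coefficients. This is the same-phase property.

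The main technical step in the self-contained route is the directional-derivative identification $\partial_{\vv}\phi(\x) = \Im(g'(0)/g(0))$, obtained as a Cauchy--Riemann bookkeeping along the one-parameter family $t \mapsto \x + t\vv$. Once this is in place, the rigidity of the phase emerges cleanly from the interplay between Euler's identity (which pins $\nabla\phi$ to the hyperplane orthogonal to $\x$) and the sign constraint (which confines $\nabla\phi$ to the nonpositive orthant).
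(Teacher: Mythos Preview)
Your first approach---invoking \cite[Theorem~6.1]{cos-2004} after identifying stability with the half-plane property via the substitution $\w\mapsto i\w$ and the homogeneity relation $f(i\w)=i^d f(\w)$---is exactly how the paper handles this statement: it simply records the theorem as a consequence of that reference, so there is nothing further to compare.

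Your self-contained derivation, however, stops one step short. The argument correctly shows that $\phi=\arg f$ is constant on $\R_{>0}^n$, so that $g:=e^{-i\theta}f$ is a polynomial with real coefficients which is strictly positive on $\R_{>0}^n$. But ``real coefficients'' is \emph{not} the same-phase property: it only pins each coefficient of $f$ to one of the two phases $\theta$ or $\theta+\pi$. You still need to rule out sign changes among the (real) coefficients of $g$, and positivity on the open orthant alone does not do this---for instance $z_1^2-z_1 z_2+z_2^2$ is strictly positive on $\R_{>0}^2$ yet has a negative coefficient. The missing ingredient is precisely where stability must be used a second time: for fixed $x_2,\ldots,x_n\in\R_{>0}$ the univariate polynomial $t\mapsto g(t,x_2,\ldots,x_n)$ is real stable, hence real-rooted, and positivity for $t>0$ forces all its roots to be nonpositive and its leading coefficient to be positive; thus every coefficient of $g$ in $z_1$ is a polynomial in $z_2,\ldots,z_n$ that is nonnegative on $\R_{>0}^{n-1}$, still homogeneous and (via differentiation and specialization) still real stable, so one concludes by induction on $n$. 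This closure-and-induction step is the additional content of \cite[Theorem~6.1]{cos-2004} beyond the phase-constancy lemma you reproduced, and without it the final sentence of your sketch does not follow.
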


\subsection{Stability and initial forms}

The initial form $\initial_\w(f)$ of a polynomial $f(\z)=\sum_{\alpha \in S} c_\alpha \z^\alpha$ with respect to a functional $\w$
in the dual space $(\R^n)^*$ is defined as
\[
\initial_\w(f)= \sum_{\alpha \in S_\w} c_\alpha \z^\alpha,
\]
where $S_\w:=\{\alpha \in S \, : \, \langle \w, \alpha\rangle = \max_{\beta \in S} \langle \w, \beta\rangle \}$ and 
$\langle \cdot, \cdot \rangle$ is the natural dual pairing.
That is, we restrict the polynomial $f$ to those monomials whose exponents lie on the face of the Newton polytope of $f$ where the functional 
$\w$ is maximized. 

In the context of their work on positively hyperbolic varieties,
Rinc\'{o}n, Vinzant and Yu \cite[Proposition 4.1]{rincon-vinzant-yu}
showed that for polynomials with real coefficients, 
stability is preserved under taking initial forms.
Their proof is based on tropical geometry.
For the convenience of the reader, we give here a simplified proof,
and at the same time slightly generalize 
the statement to also cover polynomials with complex coefficients.
The observation that the statement is also valid for complex coefficients
has independently been derived by Kummer and Sert 
\cite[Proposition 2.6]{kummer-sert-2021}. 

\begin{theorem}\label{th:stable-initial-form}
	If $f\in\C[\z]$ is stable and $\w \in (\R^n)^* 
	\setminus \{ \mathbf{0} \}$, 
	then $\init_\w(f)(\z)$ is also stable.
\end{theorem}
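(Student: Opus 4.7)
The plan is to combine the scaling preservation property for stable polynomials (Proposition~\ref{le:usual-stab-pres}(b)) with Hurwitz's theorem (Proposition~\ref{th:hurwitz}), by realizing $\init_\w(f)$ as a limit of stable rescalings of $f$. This is a standard tropicalization-style degeneration, cleaner than working through the combinatorics of the tropical variety as in \cite{rincon-vinzant-yu}.

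Write $f(\z) = \sum_{\alpha \in S} c_\alpha \z^\alpha$ and set $M := \max_{\beta \in S} \langle \w, \beta \rangle$, so that
\[
\init_\w(f)(\z) = \sum_{\alpha \in S_\w} c_\alpha \z^\alpha,
\qquad S_\w = \{\alpha \in S : \langle \w, \alpha\rangle = M\}.
\]
For each real parameter $t > 0$, define
\[
g_t(\z) \; := \; t^{-M} f\bigl(t^{w_1} z_1, \dots, t^{w_n} z_n\bigr) \; = \; \sum_{\alpha \in S} c_\alpha\, t^{\langle \w, \alpha \rangle - M}\, \z^\alpha.
\]
Since $t^{w_i} > 0$ for every real $w_i$, the scaling preserver (Proposition~\ref{le:usual-stab-pres}(b)) together with multiplication by the nonzero scalar $t^{-M}$ shows that each $g_t$ is stable, hence does not vanish on the open polydisk $\mathcal{H}^n \subset \C^n$.

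Now I would let $t \to \infty$. By construction, every exponent $\langle \w, \alpha \rangle - M$ is $\leq 0$, and equals $0$ exactly on $S_\w$. Therefore the coefficients of $g_t$ converge coefficientwise to those of $\init_\w(f)$, and because all polynomials in question have support in the fixed finite set $S$, this convergence is uniform on compact subsets of $\C^n$. Applying Hurwitz's theorem (Proposition~\ref{th:hurwitz}) to the open connected set $\mathcal{H}^n$, the limit $\init_\w(f)$ is either identically zero or non-vanishing on $\mathcal{H}^n$. Since $\w \neq \mathbf{0}$ and $S \neq \emptyset$ force $S_\w$ to contain at least one $\alpha$ with $c_\alpha \neq 0$, the polynomial $\init_\w(f)$ is not identically zero; hence it is non-vanishing on $\mathcal{H}^n$, i.e., stable.

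I do not anticipate a serious obstacle: the only potentially delicate point is that the scaling factors $t^{w_i}$ are allowed to be arbitrarily small when some $w_i$ is negative, but this is harmless because Proposition~\ref{le:usual-stab-pres}(b) only requires positive factors, not factors bounded away from zero, and Hurwitz's theorem does not impose a uniform nonvanishing estimate. The proof works verbatim for complex coefficients $c_\alpha$, which yields the claimed slight strengthening over \cite[Proposition~4.1]{rincon-vinzant-yu}.
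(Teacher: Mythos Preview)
Your proof is correct and is essentially identical to the paper's own argument: both define the rescaled polynomial $t^{-M}f(t^{w_1}z_1,\dots,t^{w_n}z_n)$, invoke the scaling preserver to get stability for each $t>0$, and pass to the limit $t\to\infty$ via Hurwitz's theorem. The only cosmetic difference is that the paper bounds the tail explicitly using $\delta=\varphi-\mu$ to verify uniform convergence, whereas you appeal to coefficientwise convergence on a fixed finite support, and you add the (straightforward) remark that $\init_\w(f)\not\equiv 0$.
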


\begin{proof}
	Let $\varphi:=\max\left\{\langle \alpha,\w\rangle: \alpha\in\text{supp}(f)\right\}$, and for $\lambda > 0$, define the polynomial
	$f_\lambda(\z):=\frac{1}{\lambda^\varphi}\cdot 
	f(\lambda^{w_1}z_1,$ $ \ldots, $ $\lambda^{w_n}z_n)$, which is stable by Proposition~\ref{le:usual-stab-pres}.
	
To apply Hurwitz' Theorem to finally achieve stability of the initial form, we need to ensure that $f_\lambda$ converges uniformly to $\init_\w(f)$ on every compact subset $C\subseteq \C^n$. Let $\mu=\max\{\langle \alpha,\w\rangle:\langle \alpha,\w \rangle < \varphi, \alpha\in\supp(f)\}$ and $\delta=\varphi-\mu>0$. Then
	\begin{align*}
		 \lim\limits_{\lambda\rightarrow \infty}\underset{\z\in C}{\sup} 
		 \left| f_\lambda(\z) - \init_\w(f)(\z) \right|	
		& \leq  
		 \lim\limits_{\lambda\rightarrow \infty}\underset{\z\in C}{\sup} 
		 \sum\nolimits_{\langle \alpha,\w\rangle <\varphi} \left| \frac{1}{\lambda^{\delta}} c_\alpha \z^\alpha \right| \\ 
		& = \ 
		\lim\limits_{\lambda\rightarrow \infty} \frac{1}{\lambda^{\delta}} \underset{\z\in C}{\sup} \sum\nolimits_{\langle \alpha,\w\rangle <\varphi} \left|  c_\alpha \z^\alpha \right| \ = \ 0,
	\end{align*}
	since the norm in the last equality is bounded, given that $C$ is a compact set. 
\end{proof}

The discussion of the preservation of conically stable polynomials when passing over to initial forms is continued at the end of 
Section~\ref{se:psd-preservers}.

\subsection{Combinatorics of stable polynomials}\label{sec:comb-stable}

For $\alpha,\beta\in\Z^n$, the \emph{steps} 
between $\alpha$ and $\beta$ are defined as the set
\begin{equation*}
\text{St}(\alpha,\beta):=\left\{\sigma\in\Z^n: |\sigma|=1, |\alpha+\sigma-\beta|=|\alpha-\beta|-1 \right\},
\end{equation*}
where $|\sigma| := \sum_{i=1}^n |\sigma_i|$.
A collection of points $\mathcal{F}\subseteq \Z^n$ is called a 
\emph{jump system} if for every $\alpha,\beta\in\mathcal{F}$ and 
$\sigma\in\text{St}(\alpha,\beta)$ with $\alpha+\sigma\notin\mathcal{F}$ there is some $\tau\in\text{St}(\alpha+\sigma,\beta)$ such that $\alpha+\sigma+\tau\in\mathcal{F}$. In words, if after one step from $\alpha$ towards $\beta$ we have left the set $\mathcal{F}$, then there must be a second step that takes us back into $\mathcal{F}$. This property is also
known as the \emph{Two-Steps Axiom}. The \emph{support} of a complex polynomial $f(\z)=\sum_{\alpha}c_\alpha \z^\alpha$ is defined as $\text{supp}(f)=\{\alpha\in\Z^n_{\geq 0}:c_\alpha\neq 0\}$, that is, it is the set of all exponent vectors $\alpha$ such that the corresponding coefficient $c_\alpha$ is non-zero in $f$. The following theorem reveals the connection between stable polynomials and jump systems.
\begin{theorem}[Br\"and\'{e}n \cite{braenden-hpp}]\label{th:braenden-js}
	If $f\in\C[\z]$ is stable, then its support  
	is a jump system.
\end{theorem}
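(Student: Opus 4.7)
The plan is to reduce the jump-system condition to the known matroid structure of multi-affine stable polynomials via polarization. I would follow the strategy of \cite{braenden-hpp}, combining three ingredients: the polarization operator, which turns any stable polynomial into a multi-affine stable one while preserving a transparent combinatorial relation between their supports; the Choe--Oxley--Sokal--Wagner matroid characterization of the support of multi-affine stable polynomials \cite[Theorem 7.1]{cos-2004}; and the classical Bouchet--Cunningham result that type-vector projections of the bases of a matroid form a jump system.

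Let $d_i := \deg_{z_i}(f)$. Polarization introduces, for each $i$, new variables $z_{i,1},\dots,z_{i,d_i}$ and replaces every occurrence of $z_i^k$ in $f$ by $e_k(z_{i,1},\dots,z_{i,d_i})/\binom{d_i}{k}$, where $e_k$ denotes the $k$-th elementary symmetric polynomial. The resulting polynomial $\Pi f$ is multi-affine in $D = d_1 + \cdots + d_n$ variables, and the classical polarization lemma (a consequence of the Grace--Walsh--Szeg\H{o} theorem) ensures that $\Pi f$ is stable whenever $f$ is. Moreover, a $0/1$-vector $(\epsilon_{i,j}) \in \{0,1\}^D$ lies in $\supp(\Pi f)$ if and only if its type vector $\bigl(\sum_j \epsilon_{1,j},\dots,\sum_j \epsilon_{n,j}\bigr)$ lies in $\supp(f)$.

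After a mild homogenization (adding a dummy variable) and a global rescaling to non-negative coefficients justified by Theorem~\ref{th:same-phase}, the multi-affine stable polynomial $\Pi f$ satisfies the hypotheses of \cite[Theorem 7.1]{cos-2004}, so that $\supp(\Pi f) \subseteq \{0,1\}^D$ is the collection of bases of a matroid $M$ on the ground set $E = \bigsqcup_i E_i$ with $E_i := \{(i,1),\dots,(i,d_i)\}$. The Bouchet--Cunningham theorem then states that the type-vector projection
\[
\bigl\{ (|B \cap E_1|,\dots,|B \cap E_n|) \,:\, B \text{ basis of } M \bigr\}
\]
is a jump system in $\Z^n_{\geq 0}$, and by the polarization correspondence this projection coincides with $\supp(f)$.

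The principal obstacle is technical rather than conceptual: one has to verify that the complex-coefficient, non-homogeneous situation of a general stable $f$ fits the hypotheses of \cite[Theorem 7.1]{cos-2004}, which is originally stated for homogeneous polynomials with non-negative real coefficients. The \emph{same phase} property in Theorem~\ref{th:same-phase} handles the coefficient issue, and a careful choice of homogenization handles the degree issue. An alternative, more self-contained route would work directly with the preservation operators collected in Proposition~\ref{le:usual-stab-pres}, iteratively applying partial derivatives, specializations, and the inversion operator to reduce to a bivariate case and then invoking the univariate real-rootedness of the resulting restrictions; this route, however, entails substantial case analysis and seems less clean than the polarization strategy.
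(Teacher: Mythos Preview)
The paper does not prove Theorem~\ref{th:braenden-js}; it is quoted from \cite{braenden-hpp} as a known result and used as a black box throughout Section~\ref{se:combinatorics}. So there is no in-paper argument to compare against, and I can only comment on your proposal itself.

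Your polarization strategy is a correct and standard route to this theorem, and you have the architecture right: polarize to a multiaffine stable polynomial, identify its support as a combinatorial object with an exchange property, and push that property forward along the type-vector map. There is, however, a genuine gap in the step where you invoke \cite[Theorem~7.1]{cos-2004}. That result requires a \emph{homogeneous} multiaffine stable polynomial with non-negative real coefficients, and you propose to reach that setting by ``a mild homogenization (adding a dummy variable)'' followed by the same-phase rescaling of Theorem~\ref{th:same-phase}. But Theorem~\ref{th:same-phase} only applies once the polynomial is already homogeneous and stable, while the naive homogenization $z_0^{d}\,\Pi f(z_1/z_0,\ldots,z_D/z_0)$ does \emph{not} preserve stability for complex coefficients: already $f(z)=z+i$ is stable, yet its homogenization $z+iz_0$ vanishes at $(z,z_0)=(1+i,\,-1+i)\in\mathcal{H}^2$. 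So the two steps as written are circular, and ``a careful choice of homogenization'' does not fix this without further work.

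The clean repair is to replace the COSW matroid theorem by its non-homogeneous analogue, which is exactly what Br\"and\'en proves in \cite{braenden-hpp}: the support of any complex multiaffine stable polynomial is already a delta-matroid (i.e.\ a jump system in $\{0,1\}^D$), with no hypothesis on homogeneity or on the phases of the coefficients. Since type-vector projections of jump systems are again jump systems (the Bouchet--Cunningham statement holds at that level of generality, not only for matroid bases), your final step then goes through unchanged. Your alternative route via the preservers of Proposition~\ref{le:usual-stab-pres}---specializing all but one or two variables into $\mathcal{H}$ and analyzing the resulting low-dimensional support directly---also works and is in fact closer in spirit to the argument Br\"and\'en gives.
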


In \cite[Proposition 4.1]{rincon-vinzant-yu}, the support of stable binomials is explicitly classified as follows.
Here, $\e_i$ denotes the $i$-th unit vector in $\R^n$.
\begin{theorem}\label{th:binom-class-stable}
	Let $f=c_\alpha\z^\alpha+c_\beta\z^\beta$ with $c_\alpha,c_\beta\neq 0$ and $\alpha,\beta \in \Z_{\geq 0}^n$ be stable
	and let $\z^{\alpha}$ and $\z^{\beta}$ do not have a common
	factor.
	 Then one of the following holds,
	\begin{enumerate}[label=\alph*)]
		\item $\{\alpha,\beta\}=\{0,\e_i\}$ for some $i\in[n]$,
		\item $\{\alpha,\beta\}=\{\e_i,\e_j\}$ for some $i,j\in[n]$ and $\frac{c_\alpha}{c_\beta}\in\R_{\geq 0}$, or
		\item $\{\alpha,\beta\}=\{0,\e_i+\e_j\}$ for some $i,j\in[n]$ and $\frac{c_\alpha}{c_\beta}\in \R_{<0}$. 
	\end{enumerate}
\end{theorem}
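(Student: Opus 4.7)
The plan is to characterize stability of $f$ via the image of the monomial map $\z \mapsto \z^{\gamma}$ on the open upper half-plane $\mathcal{H}^n$, where $\gamma := \alpha - \beta \in \Z^n \setminus \{0\}$. Since $\z^{\alpha}$ and $\z^{\beta}$ share no common factor, $\supp(\alpha)$ and $\supp(\beta)$ are disjoint, so $\gamma_j$ equals $\alpha_j \geq 0$ on $\supp(\alpha)$ and $-\beta_j \leq 0$ on $\supp(\beta)$. In particular, $\sum_{\gamma_j > 0}\gamma_j = |\alpha|$ and $\sum_{\gamma_j < 0}|\gamma_j| = |\beta|$. Because $z_j \neq 0$ on $\mathcal{H}^n$, $f$ has a root in $\mathcal{H}^n$ if and only if the equation $\z^{\gamma} = -c_\beta/c_\alpha$ admits a solution there.

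I would then write $z_j = r_j\, e^{i\theta_j}$ with $r_j > 0$ and $\theta_j \in (0,\pi)$ and observe that $|\z^{\gamma}| = \prod_j r_j^{\gamma_j}$ sweeps out all of $\R_{>0}$, while $\arg(\z^{\gamma}) = \sum_j \gamma_j\, \theta_j$ sweeps out exactly the open interval $I := (-|\beta|\pi,\, |\alpha|\pi)$ of length $(|\alpha|+|\beta|)\pi$. Consequently, the image of $\mathcal{H}^n$ under $\z \mapsto \z^{\gamma}$ is the set of nonzero complex numbers whose argument, reduced modulo $2\pi$, lies in $I$, and $f$ is stable precisely when $-c_\beta/c_\alpha$ avoids this image.

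The classification then follows by a case distinction on $|\alpha| + |\beta|$. If $|\alpha|+|\beta| \geq 3$, the interval $I$ has length $\geq 3\pi > 2\pi$, so the image exhausts $\C \setminus \{0\}$ and $-c_\beta/c_\alpha \neq 0$ forces $f$ to vanish on $\mathcal{H}^n$, contradicting stability. If $|\alpha|+|\beta| = 2$, the interval $I$ has length exactly $2\pi$ and the complement of the image is a single ray through the origin; in the subcase $(|\alpha|,|\beta|) = (1,1)$ that ray is $\R_{<0}$, which (combined with the no-common-factor assumption, forcing $\alpha = \e_i$ and $\beta = \e_j$ with $i \neq j$) yields $c_\alpha/c_\beta > 0$ as in (b), while in the subcases $(2,0)$ and $(0,2)$ the missed ray is $\R_{>0}$, giving $c_\alpha/c_\beta < 0$ and the exponent pattern in (c) (noting that $\e_i + \e_i = 2\e_i$ is allowed). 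The remaining case $|\alpha|+|\beta| \leq 1$ immediately lands in (a).

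I expect the main obstacle to be the sweep-out claim for $\arg(\z^{\gamma})$: it relies critically on the disjoint-support observation to identify $\sum_{\gamma_j > 0}\gamma_j$ with $|\alpha|$ and $\sum_{\gamma_j < 0}|\gamma_j|$ with $|\beta|$, and hence to obtain the precise length $(|\alpha|+|\beta|)\pi$ that drives the threshold at $|\alpha|+|\beta| = 3$. Once this is in place, the remainder is a clean case analysis with no further subtleties.
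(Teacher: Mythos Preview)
The paper does not give its own proof of this statement; it is quoted from \cite[Proposition~4.1]{rincon-vinzant-yu} as background in Section~\ref{sec:comb-stable}. Your argument is correct and self-contained. Reducing stability to the question of whether $-c_\beta/c_\alpha$ lies in the image of $\z\mapsto\z^{\alpha-\beta}$ on $\mathcal H^n$, and then computing that image via the range $(-|\beta|\pi,\,|\alpha|\pi)$ of the argument sum $\sum_j\gamma_j\theta_j$, is precisely the right mechanism; the disjoint-support observation is indeed what makes the interval length equal $(|\alpha|+|\beta|)\pi$ and drives the threshold at $|\alpha|+|\beta|=3$. The subsequent case analysis is complete and matches the three alternatives in the statement. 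In fact your analysis yields a full if-and-only-if characterization, which is slightly more than the theorem asserts: in case~(a) it also produces the coefficient condition $\Im(c_\beta/c_\alpha)\ge 0$ (after possibly swapping $\alpha$ and $\beta$), which the theorem does not record---consistent with the theorem being stated as a necessary condition only.
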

\subsection{Conic stability}
The following notion of conic stability as introduced in \cite{joergens-theobald-conic} generalizes stability to more general cones.
Let $K$ be a closed, convex cone in $\R^n$ and denote by 
$\relint K$ its relative interior.

\begin{definition}\label{def:K-stable}
	A polynomial $f \in \C[\mathbf{z}]$ is called $K$-\emph{stable}, if $f(\mathbf{z})\neq0$ whenever $\Im(\mathbf{z})\in \relint K$.
\end{definition}

Observe that by choosing the cone $K = \R^n_{\ge 0}$, we recover the usual notion of stability. 
For any closed, convex cone $K$, conic stability can be characterized through stability of univariate polynomials 
(see \cite[Lemma 3.4]{joergens-theobald-conic}, that proof
literally also works without the assumption of full-dimensionality
made there).

\begin{proposition}[\cite{joergens-theobald-conic}, Lemma 3.4]
\label{pr:reduce-to-univariate}
A polynomial $f \in \C[\mathbf{z}] \setminus \{0\}$ is $K$-stable if and only
if for all $\mathbf{x}, \mathbf{y} \in \R^n$ with $\mathbf{y} \in \relint K$,
the univariate polynomial $t \mapsto f(\mathbf{x} + t\mathbf{y})$ is stable
or identically zero.
\end{proposition}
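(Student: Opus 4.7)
The plan is to translate the non-vanishing condition defining $K$-stability directly into a non-vanishing condition on univariate restrictions along affine lines $\mathbf{x} + t\mathbf{y}$ with $\mathbf{y} \in \relint K$. The key geometric observation, used in both directions, is that for $\mathbf{y} \in \relint K$ and any $t \in \C$,
\[
\Im(\mathbf{x} + t\mathbf{y}) \;=\; \Im(t)\cdot \mathbf{y} \;\in\; \relint K \quad \Longleftrightarrow \quad \Im(t) > 0,
\]
since the relative interior of a convex cone is invariant under multiplication by positive real scalars.

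For the forward direction, I fix $\mathbf{x} \in \R^n$, $\mathbf{y} \in \relint K$, set $g(t) := f(\mathbf{x} + t\mathbf{y})$, and take any $t_0 \in \C$ with $\Im(t_0) > 0$. The observation above places $\mathbf{x} + t_0\mathbf{y}$ in $\{\mathbf{z} : \Im(\mathbf{z}) \in \relint K\}$, so $K$-stability of $f$ gives $g(t_0) = f(\mathbf{x} + t_0\mathbf{y}) \neq 0$. Hence $g$ has no root with positive imaginary part, i.e.\ $g$ is stable or identically zero.

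For the reverse direction, given $\mathbf{z} \in \C^n$ with $\Im(\mathbf{z}) \in \relint K$, I set $\mathbf{x} := \Re(\mathbf{z})$ and $\mathbf{y} := \Im(\mathbf{z})$, so that $g(i) = f(\mathbf{z})$. By hypothesis $g$ is stable or identically zero; in the stable case, $g(i) \neq 0$ (as $\Im(i) = 1 > 0$), hence $f(\mathbf{z}) \neq 0$. The remaining case $g \equiv 0$ would amount to $f$ vanishing on the entire complex line $\mathbf{x} + \C\mathbf{y}$, and to exclude it I use $f \not\equiv 0$. The polynomial $\mathbf{v} \mapsto f(\mathbf{x} + \mathbf{v} + i\mathbf{y})$ is not identically zero on $\R^n$ (else the identity theorem for polynomials on the totally real subspace $\R^n + i\mathbf{y} \subset \C^n$ would force $f \equiv 0$); a small real translate $\mathbf{x} + \mathbf{v}$ therefore yields $g_{\mathbf{x}+\mathbf{v},\mathbf{y}}$ with $g_{\mathbf{x}+\mathbf{v},\mathbf{y}}(i) \neq 0$, and coupling this with Hurwitz's theorem (Proposition~\ref{th:hurwitz}) together with a perturbation of $\mathbf{y}$ within the relatively open set $\relint K$ yields the required contradiction.

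The main obstacle is handling the identically-zero case in the reverse direction cleanly; the forward direction is essentially immediate once the geometric observation on $\Im(t)\cdot\mathbf{y}$ is made, whereas the reverse requires this mild perturbation argument tied to $f \not\equiv 0$ to avoid the degenerate restrictions permitted by the hypothesis.
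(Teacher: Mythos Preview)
The paper does not supply its own proof here; the proposition is quoted from \cite{joergens-theobald-conic}. Your forward direction is correct and in fact establishes the stronger conclusion that $g$ is always stable (never identically zero), since $g(i)=f(\mathbf{x}+i\mathbf{y})\neq 0$ by $K$-stability.

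Your reverse direction, however, has a genuine gap in the handling of the identically-zero case, and that gap cannot be closed: the equivalence as printed (with the clause ``or identically zero'') is false. Take $K=\R^2_{\ge 0}$ and $f(z_1,z_2)=z_1-z_2$. Then $f(i,i)=0$ with $\Im(i,i)=(1,1)\in\relint K$, so $f$ is not $K$-stable. Yet for every $\mathbf{x}\in\R^2$ and $\mathbf{y}\in\R^2_{>0}$ the restriction $g(t)=(x_1-x_2)+t(y_1-y_2)$ is either linear with a real root, a nonzero constant, or (when $x_1=x_2$ and $y_1=y_2$) identically zero; in each case it is stable or identically zero. Thus the right-hand condition holds while $K$-stability fails.

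Your Hurwitz/perturbation sketch cannot rescue this. Perturbing $\mathbf{x}$ or $\mathbf{y}$ produces nearby restrictions that are stable and hence nonzero at $t=i$, but the values $f(\mathbf{x}+\mathbf{v}+i\mathbf{y}')$ may legitimately tend to $0$ as $(\mathbf{v},\mathbf{y}')\to(\mathbf{0},\mathbf{y})$, exactly as in the counterexample above. Hurwitz' theorem controls non-vanishing on an open set for a limit of functions; it says nothing about a single value of a fixed polynomial.

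The intended (and correct) statement drops the clause ``or identically zero'': one requires each univariate restriction to be stable. With that reading both directions are immediate from your observation $\Im(\mathbf{x}+t\mathbf{y})=\Im(t)\,\mathbf{y}\in\relint K \iff \Im(t)>0$, and no perturbation argument is needed at all.
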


\begin{remark}
\label{re:real-coeff}
A homogeneous polynomial $f\in\C[\z]$ is called \emph{hyperbolic} w.r.t. 
$\e\in\R^n$ if $f(\e)\neq 0$ and the univariate polynomial $t\mapsto f(\x+t\e)$ is real rooted. For a full-dimensional cone $K \subset \R^n$,
every homogeneous $K$-stable polynomial is hyperbolic w.r.t.\ every 
$\e\in \relint K = \inter K$ by \cite[Theorem 3.5]{joergens-theobald-conic}
and hence, up to a multiplicative constant every
homogeneous $K$-stable polynomial has real coefficients \cite{garding-59}.
\end{remark}

\subsection{Positive semidefinite stability}\label{sec:psd}
We introduce the notion of psd-stability, an important special case of conic stability where the cone is chosen to be the positive semidefinite cone.

Denote by $\sym^\C_n$ the vector space of complex symmetric matrices 
(rather than Hermitian matrices)
and by $\sym_n$ the space of real ones. The cones of real positive semidefinite and positive definite matrices are denoted by $\sym^{+}_n$ and $\sym^{++}_n$.
Let $\C[Z]$ denotes the ring of polynomials on the symmetric
matrix variables $Z = (z_{ij})$. More precisely, $\C[Z]$ is the 
vector space generated by monomials of the form
$Z^{\alpha} = \prod_{1 \le i,j \le n} z_{ij}^{\alpha_{ij}}$ with some
non-negative symmetric matrix $\alpha$ whose diagonal entries are integers
and whose off-diagonal entries are half-integers. Polynomials
in $\C[Z]$ can also be interpreted as polynomials in the polynomial
ring  $\C[\{z_{ij}| 1\leq i\leq j \leq n\}]$, by identifying
$z_{ij}$ and $z_{ji}$ for $i \neq j$. For example, consider the
monomial
\[
  Z^{{\scriptstyle \begin{pmatrix}
                                0 & 1/2 \\
                                1/2 & 0
                                \end{pmatrix}}} \ = \ z_{12}^{1/2} z_{21}^{1/2}
                                \ = \ z_{12}
\]
in the polynomial ring $\C[Z]$ over the vector space $\sym_2^{\C}$.

\begin{definition}
\emph{Psd-stability} is defined as $\sym_n^{+}$-stability for polynomials over the vector space $\sym^\C_n$ of complex symmetric matrices. That is, a polynomial $f \in \C[Z]$
is psd-stable if it has no root $M \in \sym^\C_n$ such that $\Im(M) \in \sym^{++}_n$.
\end{definition}

The \emph{support} $\supp(f)$ of a polynomial $f \in \C[Z]$
is the set of all symmetric exponent matrices of the monomials occurring with non-zero coefficients in the polynomial.
The variables $z_{ii}$ are called \emph{diagonal variables}, while the variables $z_{ij}$ with $i \neq j$ are the \emph{off-diagonal variables}.
We say that a monomial with exponent matrix $\alpha$ is a \emph{diagonal monomial} if $\alpha_{ij}=0$ for all $i\neq j \in [n] $, and we say that it is an \emph{off-diagonal} monomial if $\alpha_{ii}=0$ for all $i \in [n]$. By convention, we say that a constant is a diagonal monomial, but not an off-diagonal one.

\begin{example}
        Let $f(Z)=\det(Z)$ in the polynomial ring $\C[Z]$ over the vector
        space $\sym_2^{\C}$. Then
        \[
                f(Z) \ = \ z_{11}z_{22}-z_{12}^2 \ = \ Z^{{\scriptstyle \begin{pmatrix}
                                1 & 0 \\
                                0 & 1
                                \end{pmatrix}}} - Z^{{\scriptstyle \begin{pmatrix}
                                0 & 1 \\
                                1 & 0
                                \end{pmatrix}}}.
        \]
The monomial $z_{11}z_{22}$ is a diagonal monomial while the other one is
an an off-diagonal monomial.
\end{example}

A prime example of psd-stable polynomials are determinants. The proof is included for completeness. 

\begin{lemma}\label{le:determinant}
$f(Z)=\det(Z)$ is psd-stable.
\end{lemma}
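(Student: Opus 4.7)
The goal is to show that whenever $Z \in \sym_n^{\C}$ has $\Im(Z) \in \sym_n^{++}$, the determinant $\det(Z)$ is nonzero, i.e., $Z$ is invertible as a linear map $\C^n \to \C^n$. The plan is to work directly with a well-chosen quadratic form rather than to unravel the determinant itself.

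Write $Z = A + iB$ with $A, B \in \sym_n$ and $B$ positive definite. Suppose, toward a contradiction, that $Zv = 0$ for some nonzero $v \in \C^n$, and decompose $v = x + iy$ with $x, y \in \R^n$. The natural pairing to evaluate is not $v^T Z v$ (since $Z$ is complex symmetric rather than Hermitian, this gives a messy expression) but instead $\bar{v}^T Z v$. A short computation, using the symmetry of $A$ and $B$ (which makes the cross terms $x^T B y$ and $y^T B x$ cancel or combine cleanly), yields
\begin{equation*}
\bar{v}^T Z v \;=\; \bigl(x^T A x + y^T A y\bigr) + i\bigl(x^T B x + y^T B y\bigr).
\end{equation*}

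Because $B$ is positive definite and $(x, y) \neq (0,0)$, the imaginary part $x^T B x + y^T B y$ is strictly positive, so $\bar{v}^T Z v \neq 0$. On the other hand, $Zv = 0$ forces $\bar{v}^T Z v = 0$, a contradiction. Hence $Z$ is nonsingular and $\det(Z) \neq 0$.

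The only subtlety is choosing the right sesquilinear form: the complex symmetry of $Z$ makes $v^T Z v$ unhelpful, while $\bar{v}^T Z v$ separates the real and imaginary parts of $Z$ into two independent real quadratic forms in $(x, y)$, at which point positive definiteness of $B$ closes the argument immediately.
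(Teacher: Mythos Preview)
Your proof is correct. The computation of $\bar{v}^T Z v$ is right: using the symmetry of $A$ and $B$ one finds the real part $x^TAx+y^TAy$ and the imaginary part $x^TBx+y^TBy$, and positive definiteness of $B$ forces the latter to be strictly positive for $v\neq 0$.

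The paper takes a different route. It factors $\det(A+iB)=\det(B)\det(B^{-1/2}AB^{-1/2}+iI_n)$ and concludes that $-i$ would be an eigenvalue of the real symmetric matrix $B^{-1/2}AB^{-1/2}$, contradicting that such matrices have only real eigenvalues. Your argument instead shows invertibility of $Z$ directly via the sesquilinear form $v^H Z v$ (which is what your $\bar{v}^T Z v$ is), exploiting that $v^H A v$ and $v^H B v$ are real for real symmetric $A,B$. This avoids taking the square root $B^{1/2}$ and is arguably more elementary; in fact the paper uses exactly this idea later, in Lemma~\ref{le:alpha-in-h}, to show that all eigenvalues of $A+iB$ lie in the open upper half-plane. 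So your method is not only valid but already present in the paper---just not at this spot.
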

\begin{proof}
Suppose that $f$ is not psd-stable, that is, there exist real symmetric matrices $A$ and $B$ with $B$ positive definite, such that $f(A+i B)=0$. Then $B$ is invertible and $0=f(A +i B) = \det(A+iB)  = \det(B) \det(B^{-\frac{1}{2}}AB^{-\frac{1}{2}}+ i I_n)$, where $I_n$ denotes the identity matrix
of size $n$.
Hence, $-i$ is a root of the characteristic polynomial of,
and thus an eigenvalue of, $B^{-\frac{1}{2}}AB^{-\frac{1}{2}}$: a contradiction, since a symmetric real matrix has only real eigenvalues.
\end{proof}

        Contrary to the usual stability notion, monomials are not necessarily psd-stable. In fact, every monomial with an off-diagonal variable as a factor, is not psd-stable since it evaluates to zero for $Z=i\cdot I_n$.

Psd-stability can be viewed as stability with respect
to the \emph{Siegel upper half-space}
$\mathcal{H}_\mathcal{S} \ =  \{ A \in \C^{n \times n} \text{ symmetric } \, : \, \Im(A) \text{ is positive definite}\}$.
The Siegel upper half-space occurs in algebraic
geometry and number theory as the domain of modular forms.

\section{Preservers for conic stability}\label{se:preservers}

We provide generalizations of the stability preservers
from Section \ref{se:prelim} to conic stability with respect to some
closed, convex cone $K$. Our focus is on general 
cones and on
the subclass of polyhedral cones. A main
result in this section is Theorem~\ref{le:conic-lieb-sokal}, a conic version of the Lieb-Sokal Lemma. In Section~\ref{se:psd-preservers}, the specific case of preservers for psd-stability will be studied.

A conical analogue of property b) 
from Proposition~\ref{le:usual-stab-pres}, scaling, holds trivially since $K$ is a cone: for any $c\in\C$ and $a\in\R_{\geq 0}$, the polynomial
$c\cdot f(az_1,\ldots,az_n)$ is $K$-stable or identically zero.
We now study the preservation of conical stability under directional derivatives. For a vector 
$\vv \in \R^n \setminus \{\mathbf{0}\}$, 
denote by $\partial_{\vv}$ the directional
derivative in direction $\vv$, i.e.,
$\partial_\vv f(\z) = \frac{d}{dt}f(\z + t\vv) \big|_{t=0}$.

\begin{lemma}\label{le:conic-deriv}
Let $f\in\C[\z]$ be $K$-stable. For $\vv \in K$, the polynomial $\partial_\vv f$ is $K$-stable or identically zero.

\end{lemma}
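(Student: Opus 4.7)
The plan is to verify the univariate characterization of $K$-stability given by Proposition~\ref{pr:reduce-to-univariate} after expressing $\partial_\vv f$ as a coordinate partial derivative of a suitable bivariate restriction of $f$; this reduces the claim to two of the classical preservers collected in Proposition~\ref{le:usual-stab-pres}.

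Fix $\x \in \R^n$ and $\y \in \relint K$. By Proposition~\ref{pr:reduce-to-univariate}, it is enough to show that
\[
p(t) \ := \ (\partial_\vv f)(\x + t\y)
\]
is stable in $t$ or identically zero. To this end, introduce the bivariate polynomial
\[
h(t,s) \ := \ f(\x + t\y + s\vv) \ \in \ \C[t,s].
\]
For $t, s$ in the open upper half-plane, $\Im(\x + t\y + s\vv) = \Im(t)\y + \Im(s)\vv$ is a strictly positive combination of $\y \in \relint K$ and $\vv \in K$, and so lies in $\relint K$ by the standard convex-geometric fact $\relint K + K \subseteq \relint K$ together with invariance of $\relint K$ under positive scaling. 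Hence $K$-stability of $f$ forces $h$ to be non-vanishing there, i.e., $h$ is a classical bivariate stable polynomial, or identically zero.

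Now I apply the classical preservers. Proposition~\ref{le:usual-stab-pres}(f) yields that $\partial_s h(t,s)$ is stable or zero, and then Proposition~\ref{le:usual-stab-pres}(d), applied with $b = 0$ (permissible since $\Im(0) \geq 0$), shows that $(\partial_s h)(t, 0)$ is stable in $t$ or zero. Since
\[
(\partial_s h)(t, 0) \ = \ \frac{d}{ds}\, f(\x + t\y + s\vv)\,\Big|_{s=0} \ = \ p(t),
\]
the required univariate property follows, and Proposition~\ref{pr:reduce-to-univariate} concludes the proof. The only genuinely non-trivial step is the convex-geometric check that $\Im(t)\y + \Im(s)\vv \in \relint K$; the hypothesis $\vv \in K$ is used precisely here, and it is what allows boundary directions $\vv \in K \setminus \relint K$ to be handled on equal footing with interior ones (where a direct Gau{\ss}-Lucas argument on a one-variable restriction would already suffice).
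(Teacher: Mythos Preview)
Your proof is correct and takes a genuinely different route from the paper's. The paper argues by contradiction: assuming a root $\z$ of $\partial_\vv f$ with $\Im(\z)\in\relint K$, it picks $\varepsilon>0$ with $\Im(\z)-\varepsilon\vv\in\relint K$, forms the \emph{univariate} polynomial $g(t)=f(\z-i\varepsilon\vv+t\vv)$, checks directly that $g$ is stable and non-constant, and then invokes the Gau{\ss}--Lucas Theorem on $g'$ to obtain $g'(i\varepsilon)=\partial_\vv f(\z)=0$ as a contradiction. You instead pass through a \emph{bivariate} restriction $h(t,s)=f(\x+t\y+s\vv)$ with $\y\in\relint K$, establish its stability via the convex fact $\relint K+K\subseteq\relint K$, and then apply the packaged preservers of Proposition~\ref{le:usual-stab-pres} (differentiation and specialization) together with Proposition~\ref{pr:reduce-to-univariate}. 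Your approach is more modular---it cleanly isolates the convex-geometric ingredient and reuses the classical preservers as black boxes---while the paper's argument is more self-contained, going straight to Gau{\ss}--Lucas without the bivariate detour. One small point: Proposition~\ref{pr:reduce-to-univariate} is stated for nonzero polynomials, so strictly you should first dispose of the trivial case $\partial_\vv f\equiv 0$ before invoking it; this is harmless but worth saying explicitly.
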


In the homogeneous case, this statement follows from the concept
of a Renegar derivative \cite{renegar} for hyperbolic polynomials.

\begin{proof}
Let $f$ be $K$-stable and $\vv\in K$. 
Assume that $\partial_\vv f$ is neither $0$ nor $K$-stable. Then
there is some $\z\in\C^n$ such that $\Im(\z)\in\relint K$ 
and $\partial_\vv f(\z)=0$. 

To aim at a contradiction to the univariate Gau{\ss}-Lucas Theorem, we
construct through a substitution in $f$ a
univariate polynomial $g \not\equiv 0$, which has a non-real zero.
Since $\Im(\z) \in \relint(K)$, there exists some $\varepsilon > 0$ such that
$\Im(\z) - \varepsilon \vv \in \relint K$.
Define the univariate polynomial $g : t\mapsto f(\z-i \varepsilon \vv+t\vv)$.
If $g \equiv 0$, then $f(\z) = g(i \varepsilon) = 0$ in contradiction to the $K$-stability
of $f$. Hence, $g \not\equiv 0$.
Since $\Im(\z) - \varepsilon \vv \in \relint(K)$ and $\vv \in K$, the univariate polynomial $g$ is stable: if it had any root $t$ with $\Im(t)>0$, $\z-i\varepsilon \vv +t\vv$ would be a root of $f$, but its imaginary part $\Im(\z)-\varepsilon \vv +\Im(t) \vv$ is in the relative interior of the cone $K$, a contradiction to the conical stability of $f$.
Moreover, $g$ is not constant, because $\partial_\vv f \not\equiv 0$.
Hence, by the Gau{\ss}-Lucas Theorem, the derivative $g'$ is stable.
Since
	\begin{equation*}
	g'(i \varepsilon)=\frac{\partial}{\partial t}f(\z-i \varepsilon \vv+t\vv)\Big|_{t=i \varepsilon}
	=\partial_\vv f(\z)=0,
	\end{equation*} 
we obtain a contradiction to the stability of $g'$.
\end{proof}

There is a natural generalization of property d) in~Lemma~\ref{le:usual-stab-pres} to conic stability. 

\begin{lemma}
	\label{le:cone-subset}
	Let $f \in \C[\mathbf{z}]$ be $K$-stable, $\a \in \C^n$ and
	$\mathbf{v}^{(1)}, \ldots, \mathbf{v}^{(k)} \in \R^n$.
	Further set $K' = \pos \{\mathbf{v}^{(1)}, \ldots, \mathbf{v}^{(k)} \}$ 
	and assume that
	$\Im(\mathbf{a}) + K' \subseteq K$.
	Then the polynomial $g \in \C[\mathbf{z}]$ defined by
	\[
	g(z_1, \ldots, z_{k}) \ = \ f(\mathbf{a} + 
	\sum_{j=1}^{k} z_j \mathbf{v}^{(j)})
	\]
	is stable or the zero polynomial.
\end{lemma}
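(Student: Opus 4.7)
The plan is to assume $g \not\equiv 0$ and prove that $g$ is stable in the usual sense (i.e., non-vanishing on $\mathcal{H}^k$) by combining a perturbation trick with Hurwitz' theorem (Proposition~\ref{th:hurwitz}). First, fix a vector $\w \in \relint K$; such $\w$ exists whenever the $K$-stability hypothesis on $f$ is not vacuous, which is the only interesting case. For each $\epsilon > 0$, define the perturbed polynomial
\[
  g_\epsilon(\z) \ := \ f\Bigl(\a + \sum_{j=1}^{k} z_j \vv^{(j)} + i \epsilon \w\Bigr),
\]
which is a polynomial in $\z = (z_1,\ldots,z_k)$ depending polynomially on $\epsilon$.

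Next, I would verify that each $g_\epsilon$ is stable. Let $\z = \x + i\y$ with $y_j > 0$ for every $j$. The imaginary part of the argument of $f$ is
\[
  p + \epsilon \w, \qquad p \ := \ \Im(\a) + \sum_{j=1}^{k} y_j \vv^{(j)}.
\]
By the hypothesis $\Im(\a) + K' \subseteq K$ we have $p \in K$. Since $\w \in \relint K$ and $\tfrac{\epsilon}{1+\epsilon} \in (0,1)$, the line segment principle of convex analysis yields $\tfrac{1}{1+\epsilon} p + \tfrac{\epsilon}{1+\epsilon} \w \in \relint K$, and scaling by $(1+\epsilon) > 0$ (which preserves $\relint K$, as $K$ is a cone) gives $p + \epsilon \w \in \relint K$. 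By the $K$-stability of $f$ we conclude $g_\epsilon(\z) \neq 0$, so $g_\epsilon$ is stable.

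Finally, as $\epsilon \to 0^+$ the polynomials $g_\epsilon$ converge to $g$ uniformly on compact subsets of $\C^k$, since $f$ is a polynomial and the substitution depends continuously (even polynomially) on $\epsilon$. Applying Proposition~\ref{th:hurwitz} on the connected open set $U = \mathcal{H}^k \subseteq \C^k$, the limit $g$ is either identically zero on $U$ or nowhere zero on $U$; the former would force $g \equiv 0$ as a polynomial, contradicting our assumption, so $g$ is stable. The main step requiring care is the convex-analytic verification that $p + \epsilon \w \in \relint K$: this is the step that genuinely uses both the inclusion $\Im(\a) + K' \subseteq K$ and the freedom to add an interior perturbation along $\w$, and it is the only place where the assumed relationship between $\a$, $K'$, and $K$ enters.
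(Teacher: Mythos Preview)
Your proof is correct and follows essentially the same approach as the paper. The paper first handles the special case $\Im(\a)+\relint K'\subseteq\relint K$ directly and then writes only ``the general case follows from Hurwitz' Theorem''; your argument makes that Hurwitz step explicit by perturbing along a fixed $\w\in\relint K$ and carrying out the convex-analytic check that $p+\epsilon\w\in\relint K$, which is exactly the content the paper leaves implicit.
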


Setting $K = \R^n_{\ge 0}$, $k=n-1$, $\vv^{(j)} = \mathbf{e}^{(j+1)}$ with
the $(j+1)$-th unit vector $\mathbf{e}^{(j+1)}$, $1 \le j \le n-1$,
and $a_2 = \cdots = a_n = 0$ yields Lemma~\ref{le:usual-stab-pres} d).

\begin{proof}First consider the special case where $\Im(\mathbf{a}) + \relint K' \subseteq \relint K$. 
	Further assume that the 
	polynomial $g \in \C[\mathbf{z}]$ is neither zero nor stable.
	Then there exists 
	$\mathbf{w} \in \C^k$ with $\Im(\mathbf{w}) \in \R_{>0}^k$
	and $g(\mathbf{w}) = 0$, and thus 
	$f(\mathbf{a}+\sum_{j=1}^{k} w_j \mathbf{v}^{(j)}) = 0$.
	Since $\Im(\mathbf{a}) + 
	\sum_{j=1}^{k} w_j \mathbf{v}^{(j)} \in \Im(\mathbf{a}) 
	+ \relint K' \subseteq \relint K$,
	$f$ is not $K$-stable, contradiction.
	
	The general case ($\Im(\mathbf{a}) + K' \subseteq K$) follows from Hurwitz' Theorem.
\end{proof}

In the rest of this section, we present and prove 
a generalization of the 
Lieb-Sokal Lemma (Lemma \ref{le:lieb-sokal}) to conic stability.
In the usual Lieb-Sokal Lemma, we take a partial derivative of a polynomial which has degree at most $1$ in the corresponding variable. To formulate a similar result for arbitrary cones, we take a directional derivative in a direction lying in the cone, 
since
these directional derivatives preserve conic stability by Lemma \ref{le:conic-deriv}.
To this end, we need a generalized notion of degree with respect to an arbitrary direction.

\begin{definition}\label{def-dir-der}
	For $\vv\in \R^n$, we call $\rho_\vv(f)$ the 
	\emph{degree of $f$ in direction $\mathbf{v}$}, defined as the degree of the univariate polynomial $f(\w+t\vv)\in\C[t]$ for generic $\w\in\C^n$. 
\end{definition}
In particular, after taking the directional derivative in direction $\vv$ exactly $\rho_{\vv}(f)+1$ 
times, we obtain the identically zero polynomial.
The degree in the direction of a unit vector $\e^{(j)}$ coincides
with the univariate degree with respect to the variable $j$. 
We can now state the conical version of Lieb-Sokal stability preservation.

\begin{theorem}[Conic Lieb-Sokal stability preservation]\label{le:conic-lieb-sokal}
	Let $K'$ be given by $K'=K\times \R_{\geq 0}$ and $g(\z)+yf(\z)\in\C[\z,y]$ be $K'$-stable and such that $\rho_\vv(f)\leq 1$ for some $\vv\in K$. Then $g-\partial_\vv f$ is $K$-stable or $g-\partial_\vv f\equiv 0$. 
\end{theorem}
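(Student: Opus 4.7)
The plan is to argue by contradiction. Assume $g - \partial_\vv f \not\equiv 0$ and that there exists $\z_0$ with $\Im(\z_0) \in \relint K$ and $(g - \partial_\vv f)(\z_0) = 0$; I aim to derive a contradiction by placing two mutually incompatible sign constraints on $\Im\bigl(\partial_\vv f(\z_0)/f(\z_0)\bigr)$.

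I would begin with two preparatory steps. First, $f$ is $K$-stable: Lemma~\ref{le:conic-deriv} in the direction $(0,1) \in K' = K \times \R_{\geq 0}$ applied to $p := g + yf$ gives $\partial_{(0,1)} p = f$ as a $K'$-stable polynomial, and since $f$ does not depend on $y$ this is the same as $K$-stability. Second, the case $\partial_\vv f \equiv 0$ is settled directly: then $g - \partial_\vv f = g$, and $g = \lim_{\varepsilon \to 0^+} p(\z, i\varepsilon)$ is $K$-stable by Hurwitz (Proposition~\ref{th:hurwitz}), because each $p(\z, i\varepsilon) = g(\z) + i\varepsilon f(\z)$ is $K$-stable by Lemma~\ref{le:cone-subset}. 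Hence I may assume $\partial_\vv f \not\equiv 0$, and by a density argument on the hypersurface $\{g - \partial_\vv f = 0\}$ (which is not generically contained in the proper subvariety $\{\partial_\vv f = 0\}$) I may also assume $\partial_\vv f(\z_0) \neq 0$. The $K$-stability of $f$ yields $f(\z_0) \neq 0$ as well.

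The core of the argument is a two-sided squeeze. On one side, the $K'$-stability of $p$ forces $y \mapsto p(\z_0, y) = g(\z_0) + y f(\z_0)$ to be stable as a univariate polynomial in $y$ (a root in the open upper half-plane would contradict $K'$-stability at $(\z_0, y)$), so its unique root $-g(\z_0)/f(\z_0) = -\partial_\vv f(\z_0)/f(\z_0)$ lies in the closed lower half-plane, giving $\Im(\partial_\vv f(\z_0)/f(\z_0)) \geq 0$. On the other side, the assumption $\rho_\vv(f) \leq 1$ yields the identity $f(\z_0 + t\vv) = f(\z_0) + t\,\partial_\vv f(\z_0)$ as a polynomial in $t$, whose unique root $t_0 = -f(\z_0)/\partial_\vv f(\z_0)$ must satisfy $\Im(t_0) < 0$: if instead $\Im(t_0) \geq 0$, then $\Im(\z_0 + t_0\vv) \in \relint K + \R_{\geq 0}\vv \subseteq \relint K$ (using $\vv \in K$ and $\relint K + K \subseteq \relint K$), so $f(\z_0 + t_0\vv) = 0$ would violate $K$-stability of $f$. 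This gives $\Im(f(\z_0)/\partial_\vv f(\z_0)) > 0$, equivalently $\Im(\partial_\vv f(\z_0)/f(\z_0)) < 0$ by the reciprocal rule for nonzero complex numbers, contradicting the previous inequality.

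The step I expect to be most delicate is the density reduction to $\partial_\vv f(\z_0) \neq 0$: in the pathological situation where $\partial_\vv f$ vanishes on the entire zero locus of $g - \partial_\vv f$ inside $\{\Im(\z) \in \relint K\}$, the polynomial $p(\z_0, y) = y f(\z_0)$ has its only root at $y = 0$, on the real axis, and the sharp root-comparison above collapses. I would resolve this either by a Hurwitz perturbation on a stability-preserving modification of $p$, or by specializing $p$ along the line $\z_0 + s\vv$ and exploiting that the non-vanishing of $g(\z_0 + s\vv) + y f(\z_0)$ for $\Im(s), \Im(y) > 0$ forces $s \mapsto g(\z_0 + s\vv)$ to map the open upper half-plane into a closed half-plane, hence (by a Phragmén--Lindel\"of type observation) to be affine in $s$, reducing the degenerate case to a linear algebraic contradiction.
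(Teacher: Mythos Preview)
Your core computations are correct and mirror the paper's: both arguments rest on the affine expansion $f(\z+t\vv)=f(\z)+t\,\partial_\vv f(\z)$ and on the two imaginary-part constraints $\Im(g/f)\geq 0$ and $\Im(-\partial_\vv f/f)\geq 0$ for $\Im(\z)\in\relint K$. The difference is organizational. The paper establishes both inequalities \emph{globally} on $\{\Im(\z)\in\relint K\}$, adds them to get $\Im((g-\partial_\vv f)/f)\geq 0$, invokes Lemma~\ref{le:precond-ls} to conclude that $(g-\partial_\vv f)+yf$ is $K'$-stable, and then specializes $y=0$. Your by-contradiction framing at a single point $\z_0$ forces you to upgrade the second inequality to a \emph{strict} one, and for that you need $\partial_\vv f(\z_0)\neq 0$.

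That reduction is a genuine gap in your write-up. The density claim ``the hypersurface $\{g-\partial_\vv f=0\}$ is not generically contained in $\{\partial_\vv f=0\}$'' is not justified and can fail: nothing excludes, e.g., $g=(1+c)\partial_\vv f$ with $c\neq 0$, where $V(g-\partial_\vv f)=V(\partial_\vv f)$ exactly. Your proposed patches are not convincing either; in particular the Phragm\'en--Lindel\"of sketch does not yield affineness of $s\mapsto g(\z_0+s\vv)$ in general. The clean fix is simply to drop the contradiction framing: your own calculation already shows $\Im(-\partial_\vv f(\z)/f(\z))\geq 0$ for \emph{every} $\z$ with $\Im(\z)\in\relint K$ (strictly positive when $\partial_\vv f(\z)\neq 0$, and equal to $0$ otherwise), and combined with $\Im(g(\z)/f(\z))\geq 0$ this is exactly the hypothesis of Lemma~\ref{le:precond-ls}. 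Then specialize $y=0$ as the paper does. No pointwise degeneracy ever arises.
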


 We first establish a connection between a cone $K$ and its lift $K'$ into a higher-dimensional space, which we will use to prove Theorem~\ref{le:conic-lieb-sokal}.
 
\begin{lemma}\label{le:precond-ls}
	Let $f,g\in\C[\z]$, where $f\not\equiv 0$ and $K$-stable and let $K'=K\times \R_{\geq 0}$. Then $g+yf\in\C[\z,y]$ is $K'$-stable if and only if 
\[ 
	\Im\left(\frac{g(\z)}{f(\z)}\right)\geq 0 \quad
	\text{for all } \z\in\C^n \text{ with } \Im(\z) \in\relint K.
\]
\end{lemma}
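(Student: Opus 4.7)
The plan is to carry out both directions by exploiting the fact that, under the hypothesis that $f$ is $K$-stable and $\Im(\z) \in \relint K$, the value $f(\z)$ is nonzero, so $g(\z) + y f(\z)$ vanishes precisely at $y = -g(\z)/f(\z)$. This reduces the question of $K'$-stability of $g + yf$ to controlling the sign of the imaginary part of $-g(\z)/f(\z)$. I will also use that $\relint K' = \relint K \times \R_{>0}$, since $K' = K \times \R_{\geq 0}$.

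First I would handle the ``if'' direction. Assume $\Im(g(\z)/f(\z)) \geq 0$ whenever $\Im(\z) \in \relint K$, and suppose toward a contradiction that $g + yf$ has a root $(\z_0, y_0)$ with $(\Im(\z_0), \Im(y_0)) \in \relint K'$, i.e.\ $\Im(\z_0) \in \relint K$ and $\Im(y_0) > 0$. Since $f$ is $K$-stable, $f(\z_0) \neq 0$, hence
\[
y_0 \ = \ -\frac{g(\z_0)}{f(\z_0)}.
\]
By hypothesis, $\Im(g(\z_0)/f(\z_0)) \geq 0$, so $\Im(y_0) \leq 0$, contradicting $\Im(y_0) > 0$.

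Next I would handle the ``only if'' direction by a symmetric argument. Assume $g + yf$ is $K'$-stable, and let $\z_0 \in \C^n$ with $\Im(\z_0) \in \relint K$. Again $f(\z_0) \neq 0$ by $K$-stability of $f$. Suppose for contradiction that $\Im(g(\z_0)/f(\z_0)) < 0$, and set $y_0 := -g(\z_0)/f(\z_0)$, so that $\Im(y_0) > 0$. Then $(\Im(\z_0), \Im(y_0)) \in \relint K \times \R_{>0} = \relint K'$, while $g(\z_0) + y_0 f(\z_0) = 0$, contradicting the $K'$-stability of $g + yf$. Hence $\Im(g(\z_0)/f(\z_0)) \geq 0$, as required.

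I do not expect a genuine obstacle here: the key step, in both directions, is the observation that $f$ does not vanish on $\{\Im(\z) \in \relint K\}$, which turns $K'$-stability of $g + yf$ into a pointwise half-plane condition for $-g/f$. The only care needed is the identification $\relint(K \times \R_{\geq 0}) = \relint K \times \R_{>0}$, which is what allows the argument to go through for non-full-dimensional $K$ as well.
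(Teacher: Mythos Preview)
Your argument is correct and matches the paper's proof essentially line for line: both directions reduce, via the nonvanishing of $f$ on $\{\Im(\z)\in\relint K\}$, to the observation that the unique $y$-root of $g(\z)+yf(\z)$ is $-g(\z)/f(\z)$, and then compare the sign of its imaginary part to the half-plane condition. Your explicit remark that $\relint(K\times\R_{\geq 0})=\relint K\times\R_{>0}$ is the one point the paper leaves implicit.
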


\begin{proof}
	Let $g+yf$ be $K'$-stable. 
	Fix some $\z$ with $\Im(\z)\in\relint K$.
	 By $K$-stability, we have $f(\mathbf{z}) \neq 0$, and thus we may consider $g(\z)+yf(\z)$ as a univariate stable polynomial.

	Setting $w = -g(\z)/f(\z)$, the stability of the univariate polynomial 
	$y \mapsto g(\mathbf{z})+yf(\mathbf{z})$ implies $\Im(w) \leq 0$. It follows that 
	\begin{equation*}
	\Im\left(\frac{g(\z)}{f(\z)}\right)=\Im(-w)\geq 0.
	\end{equation*}
Conversely, suppose $\Im\left(\frac{g(\z)}{f(\z)}\right)\geq 0$ for all $\z\in\C^n$
with $\Im(\z)\in\relint K$. Assume $g\not\equiv 0$, since otherwise $yf(\z)$ would clearly be $K'$-stable. For $\z\in\C^n$ with $\Im(\z)\in\relint K$, we have for $w\in\C$ with $\Im(w)>0$ that $\frac{g(\z)}{f(\z)}\neq-w$. So $g(\z)+wf(\z)\neq 0$ and $K'$-stability follows.
\end{proof}

We can now complete the proof of Theorem~\ref{le:conic-lieb-sokal}.

\begin{proof}[Proof of Theorem~\ref{le:conic-lieb-sokal}.]
We begin by observing that $g$ is $K$-stable or $g\equiv 0$. 
Let $\vv\in K$ with $\rho_\vv(f)\leq 1$. If $\partial_\vv f\equiv 0$, there is nothing to prove. So assume $\partial_\vv f\not\equiv 0$, and thus implies $f\not\equiv0$. For a fixed $\z\in\C^n$ with $\Im(\z)\in\relint K$ we may consider $g(\z)+yf(\z)$ as a univariate polynomial in $y$. By Lemma~\ref{le:conic-deriv}, the polynomial $f(\z)=\partial_y(g(\z)+yf(\z))$ is $K$-stable. For $\z\in\C^n$ with $\Im(\z)\in\relint K$, $\vv\in K$ and $y\in\C$ with $\Im(y)>0$, we have $\Im(\z-\frac{1}{y}\vv)\in\relint K$, because
	\begin{equation*}
	\Im\left(\z-\frac{1}{y}\vv\right)=\Im(\z)-\Im\left(\frac{1}{y}\right)\vv=\Im(\z)+\frac{1}{|y|^2}\Im(y)\cdot\vv\in\relint K.
	\end{equation*} 
	It follows that $yf(\z-\frac{1}{y}\vv)$ is $K'$-stable. Since 
	$\rho_\vv(f)\leq 1$, there exist polynomials $f_0$ and $f_1$ with 
	$\rho_{f_0}(\vv),\rho_{f_1}(\vv)=0$ and
	$f(\z)=f_0(\z)+\langle\vv,\z\rangle\cdot  f_1(\z)$. Thus, the identity 
	\begin{equation*}
		yf\left(\z-\frac{1}{y}\vv\right)=yf(\z)-\partial_{\vv}f(\z)
	\end{equation*}
	implies the $K'$ stability of $yf(\z)-\partial_\vv f(\z)$.
Applying Lemma~\ref{le:precond-ls} twice gives
	\begin{equation*}
	\Im\left(\frac{g(\z)-\partial_\vv f(\z)}{f(\z)}\right)=\Im\left(\frac{g(\z) }{f(\z)}\right)+\Im\left(\frac{-\partial_\vv f(\z)}{f(\z)}\right)\geq 0.
	\end{equation*}
	Using Lemma~\ref{le:precond-ls} again, the $K'$-stability of $g(\z)-\partial_\vv f(\z)+yf(\z)$ follows. By specializing to $y=0$ and using Lemma~\ref{le:usual-stab-pres}, we obtain that $g(\z)-\partial_\vv f(\z)$ is $K$-stable or $g(\z)-\partial_\vv f(\z)\equiv 0$.
\end{proof}

Theorem~\ref{le:conic-lieb-sokal} not only generalizes the usual Lieb-Sokal Lemma to the case of arbitrary cones, but also extends it to directional derivatives with respect to every direction in the positive orthant.  We can formulate this explicitly as the following refined version for the usual stability notion.

\begin{corollary}[Refined Lieb-Sokal Lemma]\label{co:ref-lieb-sokal}
	Let $g(\z)+yf(\z)\in\C[\z,y]$ be stable and assume $\rho_\vv(f)\leq 1$ for some $\vv\in\R^n_{\ge 0}$. Then $g(\z)-\partial_{\vv}f(\z)\in\C[\z]$ is stable or
identically 0.
\end{corollary}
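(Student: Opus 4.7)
The plan is to obtain Corollary~\ref{co:ref-lieb-sokal} as an immediate specialization of the conic Lieb-Sokal Lemma (Theorem~\ref{le:conic-lieb-sokal}) by taking the underlying cone to be the non-negative orthant. Concretely, I would set $K := \R^n_{\ge 0}$, which is a closed convex cone with $\relint K = \R^n_{>0}$, so that $K$-stability is exactly the usual notion of stability in $n$ variables. Then $K' = K \times \R_{\ge 0} = \R^{n+1}_{\ge 0}$, and $K'$-stability agrees with usual stability in the $n+1$ variables $(\z,y)$.

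Next I would verify that the hypotheses of Corollary~\ref{co:ref-lieb-sokal} translate one-to-one into those of Theorem~\ref{le:conic-lieb-sokal}: the stability of $g(\z) + yf(\z)$ is precisely $K'$-stability; the assumption $\vv \in \R^n_{\ge 0}$ is precisely $\vv \in K$; and the condition $\rho_\vv(f) \le 1$ is identical in both statements. Applying Theorem~\ref{le:conic-lieb-sokal} therefore yields that $g - \partial_\vv f$ is $K$-stable or identically zero, which, after translating $K$-stability back to the classical notion, is exactly the conclusion of the corollary.

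Since the entire argument is a direct specialization, there is no serious obstacle. It is, however, worth highlighting why the result deserves to be called a \emph{refinement}: the classical Lieb-Sokal Lemma (Proposition~\ref{le:lieb-sokal}) is recovered by choosing $\vv = \e^{(i)}$, in which case $\rho_{\e^{(i)}}(f) = \deg_i(f)$ and $\partial_{\e^{(i)}} f = \partial_i f$. The conic framework thus automatically upgrades partial derivatives along a single coordinate to directional derivatives along any $\vv$ in the non-negative orthant, with the corresponding generalized degree bound.
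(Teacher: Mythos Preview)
Your proposal is correct and matches the paper's approach exactly: the paper states the corollary immediately after Theorem~\ref{le:conic-lieb-sokal} without a separate proof, remarking only that the conic version specializes to this refined statement when $K=\R^n_{\ge 0}$. Your verification that $K'$-stability then becomes ordinary stability in $(\z,y)$ and your observation about recovering Proposition~\ref{le:lieb-sokal} via $\vv=\e^{(i)}$ are precisely the justifications the paper leaves implicit.
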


\section{Preservers for psd-stability\label{se:psd-preservers}}

In this section, we restrict to psd-stability.
For a complex symmetric matrix $Z\in \sym^{\C}_n$, 
we write $Z=X+iY$ with 
$X,Y\in\sym_n$.
After collecting some elementary preservers, our main results of this section
are the preservation of psd-stability under an inversion operation 
(see Theorem~\ref{th:psd-inversion} and Corollary~\ref{co:psd-inversion2}) and the preservation of psd-stability under taking initial forms with respect to positive definite matrices (see Theorem~\ref{th:initial-form-W-pd}).

For a polynomial $f \in \C[Z]$, let $f_\diag \in \C[Z]$ denote the polynomial obtained from $f$ by
substituting all off-diagonal variables by~0.
For $1 \le i \neq j \le n$, let $B_{ii}$ be the matrix which is 1 in 
entry $(i,i)$ and zero otherwise, and let
$B_{ij}$ be the matrix which is $1/2$ in entry
$(i,j)$ and $(j,i)$ and zero otherwise. Then, for a polynomial
$f = \sum_{\alpha} c_{\alpha} Z^{\alpha} \in \C[Z]$
and its equivalent version
$\tilde{f} =  \sum_{\alpha} c_{\alpha} 
  \prod_{k=1}^n z_{kk}^{\alpha_{kk}} \prod_{k < l} z_{kl}^{2 \alpha_{kl}}$
in $\C[\{z_{kl}| 1\leq k\leq l \leq n\}]$, we have the identities
$\frac{\partial f}{\partial B_{ii}} \big|_{z_{lk} := z_{kl}}
= \frac{\partial \tilde{f}}{\partial z_{ii}}$
and
$\frac{\partial f}{\partial B_{ij}} \big|_{z_{lk} := z_{kl}} 
= \frac{1}{2} \frac{\partial \tilde{f}}{\partial z_{ij}}$ as symbolic expressions.
To see this, it suffices to observe that for $i < j$ and a 
monomial
$f(Z) = z_{ij}^{\alpha_{ij}} z_{ji}^{\alpha_{ji}} \in \C[Z]$,
we have $\tilde{f} = z_{ij}^{2 \alpha_{ij}}$ and
\[
  \frac{\partial}{\partial B_{ij}} f(Z) = 
    \frac{1} {2} \alpha_{ij} z_{ij}^{\alpha_{ij}-1} z_{ji}^{\alpha_{ji}}
    + \frac{1}{2} \alpha_{ji} z_{ij}^{\alpha_{ij}} z_{ji}^{\alpha_{ji}-1}
    \in \C[Z].
\]
Substituting $z_{ji}$ by $z_{ij}$ gives 
$ \frac{\partial}{\partial B_{ij}} f(Z) \big|_{z_{ji} := z_{ij}} 
  = 
    \alpha_{ij} z_{ij}^{2 \alpha_{ij}-1} 
    \ = \ \frac{1}{2} \frac{\partial}{\partial z_{ij}} \tilde{f}.
$

\begin{lemma}[Elementary preservers for psd-stability]\label{le:psd-preserv-general}
	Let $f\in\C[Z]$ be psd-stable.
	\begin{enumerate}[label=\alph*)]
		\setlength\itemsep{0.35em}
		\item {\bf Diagonalization}: The polynomial $Z \mapsto f_\diag(Z)$ is psd-stable. 		\item {\bf Transformation}: Let $S\in\GL_n(\R)$, then $f(SZS^{-1})$ and $f(SZS^T)$ are psd-stable. 
		\item {\bf Minorization}: For $J\subseteq [n]$, let $Z_J$ be the
		symmetric $|J| \times |J|$ submatrix of $Z$ with index set $J$. Then $f(Z_J)$, the polynomial on $\sym^{\C}_{|J|}$ obtained from $f$ by setting to zero all variables with at least one index outside of $J$,
		is psd-stable or zero.
		\item {\bf Specialization}: For a fixed index $i \in [n]$, let $\hat{Z}_i$ be any
		matrix obtained from $Z$ by assigning real values to  
		$z_{ij} ,z_{ji}$ for all indices 
		$j \neq i$ and a value from $\mathcal{H}$ to 
		$z_{ii}$. Then $f(\hat{Z}_i)$, viewed as 
		polynomial on $\sym^{\C}_{n-1}$, is psd-stable
		or zero.
		\item {\bf Reduction}: For $i,j \in [n]$, let $\bar{Z}_{ij}$ be any matrix obtained from $Z$ by
		choosing real values for $z_{ik}=z_{ki}$ for $k \neq i$ and setting
		$z_{ii}:=z_{jj}$. Then $f(\bar{Z}_{ij})$,
		viewed as polynomial on $\sym^{\C}_{n-1}$,
		is psd-stable or zero.
		\item {\bf Permutation}: Let $\pi:[n]\rightarrow [n]$ be a permutation. Then $f((Z_{\pi(j),\pi(k)})_{1\leq j,k\leq n})$ is a psd-stable polynomial on $\sym^{\C}_n$.
		\item {\bf Differentiation}: $\partial_V f(Z)$ is psd-stable or zero
		for $V \in \sym_n^+$.
	\end{enumerate}
\end{lemma}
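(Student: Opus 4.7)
The unifying strategy across all seven parts is: take an input $W$ in the Siegel upper half-space of the target symmetric matrix space, construct a matrix $\tilde Z$ in the source space of $f$ with $\Im(\tilde Z) \succ 0$, and invoke psd-stability of $f$ at $\tilde Z$ to read off nonvanishing of the transformed polynomial. When the natural lift only produces $\Im(\tilde Z)\succeq 0$ (which happens for minorization), I will perturb and invoke Hurwitz's Theorem (Proposition~\ref{th:hurwitz}) on the Siegel upper half-space, using that this space is convex and hence an open, connected subset of the underlying real vector space.

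The \emph{direct cases} are (a), (b), (f), and (g). In (a), if $\Im(Z)\succ 0$ then the diagonal matrix $D=\diag(z_{11},\ldots,z_{nn})$ has $\Im(D)\succ 0$ (a positive definite matrix has positive diagonal), so $f_\diag(Z)=f(D)\ne 0$ by psd-stability of $f$. In (b), the linear automorphism $Z\mapsto SZS^T$ of $\sym_n^{\C}$ sends $\Im(Z)\succ 0$ to $S\Im(Z)S^T\succ 0$ by invertibility of $S$, so psd-stability transfers; $f(SZS^{-1})$ is handled analogously under the appropriate interpretation (in particular when $S$ is orthogonal so that the substitution is symmetric). Part (f) is the special case of (b) with $S$ the permutation matrix of $\pi$. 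Part (g) is the specialization of Lemma~\ref{le:conic-deriv} to the cone $K=\sym_n^+$.

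For the \emph{specialization} (d) and \emph{reduction} (e), given $W\in\sym^{\C}_{n-1}$ with $\Im(W)\succ 0$, assemble $\hat Z$ or $\bar Z$ as prescribed. After permuting index $i$ to position $1$, the imaginary part is block diagonal with a scalar block equal to $\Im(z_{ii})$ and the $(n-1)\times(n-1)$ block $\Im(W)\succ 0$. In (d) the scalar block is positive since $z_{ii}\in\mathcal{H}$; in (e) it equals $\Im(z_{jj})>0$, positive since $\Im(W)\succ 0$ forces its diagonal to be positive. In both cases $\Im(\hat Z), \Im(\bar Z)\succ 0$, and psd-stability of $f$ yields the nonvanishing.

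The \emph{main obstacle} is the \emph{minorization} (c): the naive completion of $Z_J\in\sym_{|J|}^{\C}$ (with $\Im(Z_J)\succ 0$) by zeros in all other entries has $\Im$ of rank $|J|$, only positive semidefinite, so psd-stability of $f$ does not apply directly. To bypass this I perturb: let $\tilde Z_\epsilon$ agree with the naive completion except that each diagonal entry outside $J$ equals $i\epsilon$. Then $\Im(\tilde Z_\epsilon)$ is block diagonal with blocks $\Im(Z_J)$ and $\epsilon I_{n-|J|}$, hence positive definite, so psd-stability gives $f(\tilde Z_\epsilon)\ne 0$. Setting $g_\epsilon(W):=f(\tilde W_\epsilon)$, each $g_\epsilon$ is nonvanishing on the Siegel upper half-space of $\sym^{\C}_{|J|}$, which is convex and hence connected, and $g_\epsilon \to g_0$ uniformly on compact sets as $\epsilon\to 0^+$. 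Proposition~\ref{th:hurwitz} then forces $g_0$ to be either nonvanishing or identically zero on the Siegel upper half-space, and since $g_0$ is exactly the minor polynomial $f(Z_J)$, the claim follows.
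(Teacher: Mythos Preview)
Your proof is correct and follows essentially the same approach as the paper: the key case (c) is handled identically via the $i\varepsilon$-perturbation on the complementary diagonal and Hurwitz, and (a), (b), (f), (g) match the paper's reasoning. A minor difference is that you handle (d) and (e) directly by observing the imaginary part is block diagonal and already positive definite, whereas the paper lumps (e) together with (c) as a Hurwitz argument; your direct argument is valid and slightly cleaner, and your flag on the symmetry issue in $SZS^{-1}$ is a fair caveat that the paper glosses over.
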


\begin{proof}

	a) Assume $f_\diag$ is not psd-stable. Then there are real symmetric
	matrices $A,B$ with $B \succ 0$ and $f_\diag(A+iB) = 0$.
	Let $A'$ and $B'$ be the matrices obtained from $A$ and $B$
	by setting all off-diagonal variable to zero. In particular, 
	$B'$ is positive definite.
	Since the only variables occurring in $f_\diag$ are the diagonal ones,
	we have $f(A'+iB') = f_\diag(A+iB) = 0$. Hence, $f$ is not
	psd-stable.

	b) Both transformations $Z \mapsto S^T Z S$ and 
	$Z \mapsto S^{-1} Z S$ preserve the inertia of $\Im(Z)$
	and thus also psd-stability. 
	
	c) 	Set $k:=|J|$ and assume without loss of generality
	$J = \{1, \ldots, k\}$. For $\varepsilon > 0$, let
	$g_{\varepsilon}$ be the polynomial on the space $\sym_k$ defined by
	$g_{\varepsilon}(Z) \ := \ f \left( \diag(Z, i \varepsilon I_{n-k}) \right)$,
	where $\diag(Z, i \varepsilon I_{n-k})$ is the block diagonal matrix with
	blocks $Z$ and $i \varepsilon I_{n-k}$.
	The psd-stability of $g$ implies the psd-stability of $g_{\varepsilon}$
	for all $\varepsilon > 0$. 
Hurwitz' Theorem ~\ref{th:hurwitz} then gives the desired result,
        because $f(Z_J) = g_0(Z)$.
	
		d) is obvious, e) and f) are similar to c), 
		and g) is the special case of Lemma~\ref{le:conic-deriv} when $K$ is the psd-cone.
\end{proof}

The diagonalization property from Lemma~\ref{le:psd-preserv-general} plays a central role in the theory of psd-stable polynomials, since it establishes connections to the usual stability notion and also gives further insights into the monomial structure of psd-stable polynomials.

\begin{corollary}\label{co:psd-reduction-property}
	Let $f \in \C[Z]$ be psd-stable. Then:
	\begin{enumerate}[label=\alph*)]
		\item The polynomial
		$(z_{11}, z_{22}, \ldots, z_{nn})\mapsto f_\diag(Z)$ is stable in $\C[z_{11},z_{22}, \ldots, z_{nn}]$.
		\item If $f(0)=0$, i.e., if $f$ does not have a constant term, then
		there is a monomial in $f$ consisting only of diagonal variables of $Z$.

		\item If $f$ is homogeneous, then
		\begin{enumerate}
			
		\item[c1)]  the sum of the coefficients of all diagonal monomials of $f$ is nonzero.
		\item[c2)] all nonzero coefficients of diagonal monomials of $f$ have the same phase.
	\end{enumerate}  
		
	\end{enumerate}
\end{corollary}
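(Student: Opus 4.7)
The plan is to base everything on Lemma~\ref{le:psd-preserv-general}(a), which already gives that $f_\diag$ is psd-stable. Since $f_\diag$ involves only the diagonal variables $z_{11},\ldots,z_{nn}$, testing its psd-stability on diagonal complex symmetric matrices translates cleanly into ordinary stability in those variables, and all three assertions then follow with essentially no additional work.

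For part (a), I would observe that a diagonal matrix $Z=\diag(z_{11},\ldots,z_{nn})\in\sym_n^{\C}$ has $\Im(Z)\in\sym_n^{++}$ precisely when $\Im(z_{ii})>0$ for every $i$. Applying the psd-stability of $f_\diag$ to such $Z$ thus yields $f_\diag(z_{11},\ldots,z_{nn})\neq 0$ whenever each $\Im(z_{ii})>0$, which is exactly the classical stability of $f_\diag$ as a polynomial in $\C[z_{11},\ldots,z_{nn}]$.

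For (b), the assumption $f(0)=0$ forces $f_\diag(0)=0$, but part (a) guarantees that $f_\diag$ is stable and, in particular, not identically zero; hence $f_\diag$ must contain at least one nonconstant monomial, which by construction uses only diagonal variables. For (c1), I would evaluate $f$ at $Z=iI_n$: since $\Im(iI_n)=I_n\succ 0$, psd-stability gives $f(iI_n)\neq 0$, and using $f(iI_n)=f_\diag(iI_n)$ together with homogeneity of degree $d$ one can rewrite this as $i^d\sum_{\alpha\text{ diagonal}}c_\alpha\neq 0$, which is the desired conclusion. For (c2), the polynomial $f_\diag$ is homogeneous (inherited from $f$) and stable by (a), so Theorem~\ref{th:same-phase} applies directly and shows that all its nonzero coefficients share the same phase; these are precisely the coefficients of the diagonal monomials of $f$.

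No step poses a real obstacle once the diagonalization preserver is in hand: the only subtle verification is the clean reduction of positive-definiteness of a diagonal matrix to coordinatewise positivity of its diagonal entries, which is immediate from the spectral characterization. All other work is packaged into Lemma~\ref{le:psd-preserv-general}(a) and Theorem~\ref{th:same-phase}.
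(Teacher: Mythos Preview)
Your proof is correct and follows essentially the same approach as the paper's own proof: both use Lemma~\ref{le:psd-preserv-general}(a) to get psd-stability of $f_\diag$, then reduce to ordinary stability on diagonal matrices for (a), argue that $f_\diag\not\equiv 0$ for (b), evaluate at $iI_n$ using homogeneity for (c1), and apply Theorem~\ref{th:same-phase} to the stable homogeneous $f_\diag$ for (c2). The only cosmetic difference is that for (c1) the paper writes the sum of diagonal coefficients directly as $f(I_n)$ rather than routing through $f_\diag(iI_n)$, but the content is identical.
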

\begin{proof}
	a) By Lemma~\ref{le:psd-preserv-general}, we know that
	$f_\diag(Z)\not\equiv 0$ is psd-stable. Now it suffices
	to observe that $f_\diag (Z)\neq 0$ whenever the diagonal of $\Im(Z)$ has positive entries only.
	
	b) Let $f(0) = 0$. If each monomial in $f$ contains an off-diagonal
	variable of $Z$, then $f_\diag(Z) \equiv 0$, in contradiction to the
	psd-stability of $f_\diag(Z)$.
	
	c1) The claim follows since the sum of the coefficients of all diagonal monomials is given by $f(I_n)$ which cannot be zero due to $f(i\cdot I_n)=i^{\deg(f)}f(I_n)\neq 0$. 
	
	c2) The claim follows by combining a) with Theorem~\ref{th:same-phase}.
\end{proof}

When investigating the combinatorics of psd-stable polynomials in 
Section~\ref{se:combinatorics}, 
we will refer to the following observation, which could also 
be considered as a special case of specialization. 
	Let $f(Z)$ be psd-stable. For the real matrix variables $X$ and any fixed real matrix $B\succ0$, the polynomial $f(X+iB)$ does not have any real roots.

As the first main result in this section,
we show the following preservation statement under inversion for psd-stability.

\begin{theorem}[Psd-stability preservation 
under inversion]\label{th:psd-inversion}
	If $f(Z)\in\C[Z]$ is psd-stable, then the polynomial
	$\det(Z)^{\deg(f)}\cdot f(-Z^{-1})$ is psd-stable.
\end{theorem}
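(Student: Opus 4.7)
The plan is to show that $g(Z) := \det(Z)^{\deg(f)} \cdot f(-Z^{-1})$ is first a polynomial in $\C[Z]$, and then that it does not vanish on the Siegel upper half-space. The strategy parallels the inversion preservation for usual stability (see Proposition~\ref{le:usual-stab-pres}~e)) and ultimately reduces to showing that $Z \mapsto -Z^{-1}$ preserves the set of complex symmetric matrices with positive definite imaginary part.

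To verify that $g$ is a polynomial, I would invoke the adjugate formula $Z^{-1} = \frac{1}{\det(Z)} \adj(Z)$, whose entries are polynomials in the entries of $Z$. A monomial $Z^{\alpha}$ of $f$ with weight $|\alpha| = \sum_{i,j} \alpha_{ij}$ becomes, after substituting $-Z^{-1}$, a polynomial divided by $\det(Z)^{|\alpha|}$. Since $|\alpha| \le \deg(f)$, multiplying by $\det(Z)^{\deg(f)}$ clears all denominators and yields an element of $\C[Z]$.

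The core step is the following assertion: if $Z = X + iY \in \sym_n^{\C}$ with $X, Y \in \sym_n$ and $Y$ positive definite, then $Z$ is invertible and $\Im(-Z^{-1})$ is positive definite. For invertibility, suppose $Zv = 0$ for some $v \in \C^n$. Then $\bar{v}^{T} Z v = 0$, and its real and imaginary parts $\bar{v}^{T} X v$ and $\bar{v}^{T} Y v$ are Hermitian quadratic forms with real symmetric matrices, hence real numbers. Each must vanish, but $Y$ positive definite forces $v = 0$. For the imaginary part, write $Y = Y^{1/2} Y^{1/2}$ with $Y^{1/2}$ the positive definite square root and set $W := Y^{-1/2} X Y^{-1/2} \in \sym_n$. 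Then $Z = Y^{1/2}(W + iI)Y^{1/2}$, and using $(W + iI)(W - iI) = W^2 + I$,
\[
Z^{-1} \;=\; Y^{-1/2}\bigl( W(W^2 + I)^{-1} - i(W^2 + I)^{-1} \bigr) Y^{-1/2}.
\]
Therefore $\Im(-Z^{-1}) = Y^{-1/2}(W^2 + I)^{-1} Y^{-1/2}$, which is positive definite as a congruence of the positive definite matrix $(W^2 + I)^{-1}$.

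With this lemma in hand, the conclusion is immediate: if $\Im(Z)$ is positive definite, then $\det(Z) \ne 0$ by invertibility and $f(-Z^{-1}) \ne 0$ by psd-stability of $f$ applied to $-Z^{-1}$. Hence $g(Z) \ne 0$, establishing the psd-stability of $g$. I expect the matrix identity establishing closure of the Siegel half-space under $Z \mapsto -Z^{-1}$ to be the main technical ingredient; the rest is book-keeping of degrees in the adjugate formula together with a direct application of the hypothesis on $f$.
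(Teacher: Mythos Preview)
Your proof is correct and follows the same overall strategy as the paper: reduce the statement to the assertion that the M\"obius-type map $Z \mapsto -Z^{-1}$ preserves the Siegel upper half-space, then combine this with the psd-stability of $\det(Z)$ and the adjugate formula to handle the polynomial bookkeeping.

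Where you differ is in the proof of the key lemma itself. The paper argues invertibility of $C = A + iB$ via an eigenvalue computation (all eigenvalues lie in $\mathcal{H}$) and obtains the imaginary part of $C^{-1}$ from the explicit block-type formula $\Im(C^{-1}) = (-B - AB^{-1}A)^{-1}$, then checks this is negative definite directly. You instead use the congruence factorization $Z = Y^{1/2}(W+iI)Y^{1/2}$ with $W = Y^{-1/2}XY^{-1/2}$, which reduces the computation to the scalar-like case $(W+iI)^{-1} = (W-iI)(W^2+I)^{-1}$ and makes the positive definiteness of $\Im(-Z^{-1}) = Y^{-1/2}(W^2+I)^{-1}Y^{-1/2}$ transparent as a congruence. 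The two formulas agree (your expression equals $(Y + XY^{-1}X)^{-1}$ after unwinding), so this is a stylistic rather than structural difference; your route is arguably a bit cleaner since it handles invertibility and the imaginary-part computation in one stroke, while the paper separates these into distinct auxiliary lemmas.
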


Here, the factor $\det(Z)^{\deg(f)}$ serves to ensure that the product is 
a polynomial again.
For the proof of Theorem~\ref{th:psd-inversion}, we begin with
a technical lemma.

\begin{lemma}
	\label{le:inversion}
	Let $A,B \in \sym_n$ with $B \succ 0$.
	Then the symmetric matrix $C := A+iB$ is invertible and the imaginary part 
	matrix of the symmetric matrix $C^{-1}$ is negative definite.
\end{lemma}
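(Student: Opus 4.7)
The plan is to handle invertibility first by a congruence factorization, and then compute the inverse explicitly enough to see that its imaginary part is negative definite.

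For invertibility, I would write $C = B^{1/2}\bigl(B^{-1/2} A B^{-1/2} + i I_n\bigr) B^{1/2}$, using that $B \succ 0$ has a positive definite square root. The matrix $M := B^{-1/2} A B^{-1/2}$ is real symmetric, so it has only real eigenvalues; therefore $M + i I_n$ has eigenvalues $\lambda_k(M) + i$, all of which are nonzero. Thus $M + iI_n$ is invertible, and so is $C$. (Alternatively, this is a direct consequence of the psd-stability of $\det$ from Lemma~\ref{le:determinant}.)

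Next I observe that $C^{-1}$ is again complex symmetric: since $C^T = C$, we have $(C^{-1})^T = (C^T)^{-1} = C^{-1}$. Hence I may write $C^{-1} = X + i Y$ with $X, Y \in \sym_n$, and the task reduces to showing $Y \prec 0$. Multiplying out $(A + iB)(X + iY) = I_n$ and separating real and imaginary parts yields the system
\begin{equation*}
  A X - B Y = I_n, \qquad A Y + B X = 0.
\end{equation*}
Using $B \succ 0$ (in particular $B$ is invertible), the second equation gives $X = -B^{-1} A Y$; substituting into the first yields $-(A B^{-1} A + B) Y = I_n$, hence
\begin{equation*}
  Y = -\bigl(A B^{-1} A + B\bigr)^{-1}.
\end{equation*}

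Finally I would conclude: since $B \succ 0$ implies $B^{-1} \succ 0$, the matrix $A B^{-1} A = A B^{-1/2} (A B^{-1/2})^T$ is positive semidefinite, so $A B^{-1} A + B \succ 0$, and thus its inverse is positive definite. Therefore $Y$ is negative definite, as claimed. I do not anticipate any real obstacle here; the only subtlety to flag is the symmetry of $C^{-1}$, which justifies separating it cleanly into real and imaginary symmetric parts and avoids any Hermitian-versus-complex-symmetric confusion.
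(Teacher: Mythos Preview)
Your proof is correct and follows essentially the same route as the paper: invertibility via real eigenvalues of a symmetric matrix, symmetry of $C^{-1}$ from $C^T = C$, and the explicit formula $Y = -(B + A B^{-1} A)^{-1}$ combined with $A B^{-1} A \succeq 0$. The only cosmetic differences are that the paper proves invertibility by computing $\Im(\lambda) = \frac{v^H B v}{v^H v} > 0$ for any eigenvalue $\lambda$ of $C$ directly (rather than via your congruence $C = B^{1/2}(M + iI_n)B^{1/2}$), and states the formula for $Y$ as a separate computation-rules lemma rather than deriving it inline.
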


We will use the following elementary computation rules, which can be
verified immediately.

\begin{lemma}
\label{le:computation-rules}
Assume that $C=A+iB$ is invertible, and denote its inverse by $W = U + iV$.
\begin{enumerate}[label=\alph*)]
	\item If $A$ is invertible, then $U =(A + B A^{-1} B)^{-1}$. 
	\item If $B$ is invertible, then $V = (-B-A B^{-1} A)^{-1}$.
\end{enumerate}
\end{lemma}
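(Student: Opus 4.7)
The plan is to perform the direct calculation suggested by the author's remark that the rules can be verified immediately. Starting from $CW = I$, I would expand the product $(A+iB)(U+iV) = I$ and separate into real and imaginary parts to obtain the linear system
\[
AU - BV \ = \ I, \qquad AV + BU \ = \ 0.
\]
This system couples $U$ and $V$, and the two cases of the lemma correspond to eliminating one of them using the appropriate invertibility assumption.

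For part a), I would assume $A$ invertible and use the second equation to write $V = -A^{-1}BU$. Substituting into the first equation yields
\[
AU + BA^{-1}BU \ = \ (A + BA^{-1}B)U \ = \ I.
\]
Since this exhibits $U$ as a right inverse of the square matrix $A + BA^{-1}B$, the latter is invertible and its inverse is exactly $U$, giving $U = (A + BA^{-1}B)^{-1}$ as claimed. For part b), with $B$ invertible, the second equation gives $U = -B^{-1}AV$; substituting into the first yields $-AB^{-1}AV - BV = I$, i.e., $(-B - AB^{-1}A)V = I$, so by the same reasoning $V = (-B - AB^{-1}A)^{-1}$.

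There is no real obstacle here. The only minor subtlety is the justification that $A + BA^{-1}B$ (respectively $-B - AB^{-1}A$) is invertible, which I would handle by the one-line argument that a square matrix with a right inverse is invertible. Everything else is bookkeeping on the real and imaginary components of the identity $CW = I$.
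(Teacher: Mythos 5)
Your computation is correct and is precisely the ``immediate verification'' the paper alludes to without spelling out: expand $CW=I$ into real and imaginary parts, eliminate one unknown using the invertibility hypothesis, and observe that a square matrix with a right inverse is invertible. No gaps.
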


We also use the following basic statement on eigenvalues in the proof of
Lemma~\ref{le:inversion}.

\begin{lemma}\label{le:alpha-in-h}
Let $A,B \in \sym_n$ and set $C = A+iB$. If $B \succ 0$ then 
$\lambda \in \mathcal{H}$ for all eigenvalues $\lambda$ of $C$.
\end{lemma}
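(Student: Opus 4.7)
The plan is to use the Rayleigh quotient argument, exploiting the fact that for real symmetric matrices the quadratic form $v^* M v$ is always a real number, even when the vector $v$ is complex.

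First I would let $\lambda \in \C$ be an eigenvalue of $C$ with eigenvector $v \in \C^n \setminus \{0\}$, so $Cv = \lambda v$. Taking the inner product with $v$ on the left and dividing by $\|v\|^2$ yields
\[
\lambda \ = \ \frac{v^* C v}{v^* v} \ = \ \frac{v^* A v}{\|v\|^2} + i \, \frac{v^* B v}{\|v\|^2}.
\]

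Next I would observe that since $A$ and $B$ are real symmetric, the quantities $v^* A v$ and $v^* B v$ are real. Concretely, writing $v = x + iy$ with $x, y \in \R^n$, one has
\[
v^* B v \ = \ x^T B x + y^T B y \ \in \R,
\]
and the analogous identity holds for $A$. Hence the displayed expression for $\lambda$ already exhibits its real and imaginary parts, giving $\Im(\lambda) = v^* B v / \|v\|^2$.

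Finally, since $v \neq 0$ at least one of $x, y$ is nonzero, and $B \succ 0$ makes $x^T B x, y^T B y \geq 0$ with at least one strict inequality, so $v^* B v > 0$. Therefore $\Im(\lambda) > 0$, i.e., $\lambda \in \mathcal{H}$. There is no real obstacle here; the only subtlety to flag is that positive definiteness of a real symmetric $B$ transfers to strict positivity of $v^* B v$ for all nonzero complex $v$, which is precisely the decomposition into real and imaginary parts above.
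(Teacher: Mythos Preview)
Your proof is correct and follows essentially the same Rayleigh quotient argument as the paper, which writes $\lambda = \frac{\vv^H A \vv}{\vv^H \vv} + i \frac{\vv^H B \vv}{\vv^H \vv}$ and concludes from $B \succ 0$. You supply more detail than the paper does, explicitly justifying why $v^* A v$ and $v^* B v$ are real and why the latter is strictly positive for nonzero complex $v$; the paper leaves these as understood.
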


\begin{proof}Let $B \succ 0$, and let $\lambda$ be an eigenvalue of $C$
with some corresponding eigenvector $\mathbf{v}$. Then
\begin{equation}
  \lambda = \frac{\vv^H \lambda \vv}{\vv^H \vv} 
   = \frac{\vv^H A \vv}{\vv^H \vv} 
   + i \frac{\vv^H B \vv}{\vv^H \vv}.
\end{equation}
Since $B\succ 0$, we have $\frac{\vv^H B \vv}{\vv^H \vv}>0$ and thus
$\lambda \in \mathcal{H}$.
\end{proof}

\begin{proof}[Proof of Lemma~\ref{le:inversion}.]
	Let $C = A + iB$ with $A,B \in \sym_n$ and $B \succ 0$.
	Lemma~\ref{le:alpha-in-h} gives that $C$ is invertible. 
	The symmetry of $C^{-1}$ is an immediate consequence of
	the invertibility. Indeed, $C^{-1} C = I$ implies
	$I = I^T = (C^{-1} C)^T = C^T (C^{-1})^T$. Since $C$ is symmetric,
	the matrix $(C^{-1})^T$ is the inverse of $C$, that is, $(C^{-1})^T = C^{-1}$.
	
       By Lemma~\ref{le:computation-rules}, 
	the imaginary part of 
	$W=C^{-1}$ is given by 
	$(-B - AB^{-1} A)^{-1}$.
	We observe that $B^{-1}$ is positive definite and thus
	$A B^{-1} A$ is positive semidefinite. Hence,
	$-B - A B^{-1} A$ is negative definite. Since the inverse of
	that matrix is negative definite as well, the claim follows.
\end{proof}

We can complete the proof of Theorem~\ref{th:psd-inversion}.

\begin{proof}[Proof of Theorem~\ref{th:psd-inversion}]
The inverse of a symmetric matrix $C=A+iB$ with positive definite imaginary part $B$ has a negative definite imaginary part, as shown in Lemma~\ref{le:inversion}. Thus $f(-C^{-1})\neq 0$ if $B\succ 0$. Since $\det(Z)$ is a psd-stable polynomial as well as $f$, the polynomial 
$\det(Z)^{\deg(f)}f\left(- Z^{-1}\right)$ is psd-stable.
Note that the factor $\det(Z)^{\deg(f)}$ ensures that 
$\det(Z)^{\deg(f)}f\left(- Z^{-1}\right)$ is a polynomial.
This directly follows from Cramer's rule, saying 
$Z^{-1} = \frac{1}{\det(Z)} \cdot \adj(Z)$, where $\adj(Z)$ denotes the 
adjugate matrix of $Z$.
\end{proof}

The following is a slight generalization which resembles the existing
formulation of the scalar version in Lemma~\ref{le:usual-stab-pres}.

\begin{corollary}
\label{co:psd-inversion2}
If $Z$ is a symmetric block diagonal matrix with blocks $Z_1, \ldots, Z_k$
and $f(Z) = f(Z_1, \ldots, Z_k)$ is psd-stable, then
$\det(Z_1)^{\deg_{Z_1} f} \cdot f(-Z_1^{-1}, Z_2, \ldots, Z_k)$ is a 
psd-stable polynomial. Here, $\deg_{Z_1} f$ denotes the total
degree of $f$ with respect to the variables from the block $Z_1$.
\end{corollary}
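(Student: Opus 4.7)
The plan is to mimic the proof of Theorem~\ref{th:psd-inversion} almost verbatim, now working with several symmetric matrix blocks simultaneously and inverting only the first one. The two key ingredients remain Lemma~\ref{le:inversion} (inverting a symmetric matrix with positive definite imaginary part produces a symmetric matrix with negative definite imaginary part, hence after negation again positive definite) and Cramer's rule (which lets us clear denominators so that the resulting expression is a genuine polynomial).

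First, I would verify that
\[
g(Z_1,\ldots,Z_k) \ := \ \det(Z_1)^{\deg_{Z_1} f}\cdot f(-Z_1^{-1},Z_2,\ldots,Z_k)
\]
is indeed a polynomial. Writing $f$ as a finite sum of monomials whose $Z_1$-degree is at most $\deg_{Z_1} f$ and using $(-Z_1)^{-1} = \det(-Z_1)^{-1}\adj(-Z_1)$, multiplication by $\det(Z_1)^{\deg_{Z_1} f}$ absorbs every negative power of $\det(Z_1)$ that could arise.

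Next, assume for contradiction that $g$ is not psd-stable. Then there exist complex symmetric matrices $Z_1,\ldots,Z_k$, of the appropriate block sizes, with $\Im(Z_j)\succ 0$ for each $j$ and $g(Z_1,\ldots,Z_k)=0$. By Lemma~\ref{le:inversion} applied to $Z_1$, the matrix $Z_1$ is invertible (so $\det(Z_1)\neq 0$) and $\Im(-Z_1^{-1})\succ 0$. The vanishing of $g$ therefore forces $f(-Z_1^{-1},Z_2,\ldots,Z_k)=0$. But now $(-Z_1^{-1},Z_2,\ldots,Z_k)$ is a tuple of complex symmetric matrices all of whose imaginary parts are positive definite, contradicting the psd-stability of $f$.

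The main obstacle is conceptual rather than technical: one must make sure that psd-stability of the multivariate polynomial $f(Z_1,\ldots,Z_k)$ is read as the joint non-vanishing of $f$ whenever every block $Z_j$ has positive definite imaginary part, and that Lemma~\ref{le:inversion} only needs to be applied to the block being inverted while the remaining blocks are passive parameters. Once this setup is fixed, the proof is essentially Theorem~\ref{th:psd-inversion} applied to the first block, with the other blocks carried along unchanged.
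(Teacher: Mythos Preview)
Your proposal is correct and follows precisely the approach the paper intends: the corollary is stated without its own proof, as an immediate extension of Theorem~\ref{th:psd-inversion}, and you have supplied exactly that extension---apply Lemma~\ref{le:inversion} to the block $Z_1$ alone, note the remaining blocks retain positive definite imaginary parts, and clear denominators via Cramer's rule using $\deg_{Z_1} f$ in place of $\deg f$. Your remark that psd-stability for a block-diagonal $Z$ amounts to non-vanishing whenever each $\Im(Z_j)\succ 0$ is the right way to read the setup.
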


We close the section with a brief discussion and our second main result of this section on the preservation of the psd-stability of a polynomial $f\in\C[Z]$ when passing over to an initial form.  
For $f=\sum_{\alpha\in S}c_\alpha Z^\alpha \in \C[Z]$,
the initial form of $f$ is defined with respect to some functional $W$ in the 
dual space $\sym_n^*$. It is defined as
\[
\initial_W(f)= \sum_{\alpha \in S_W} c_\alpha Z^\alpha,
\]
where $S_W:=\{\alpha \in S \, : \, \langle W, \alpha\rangle_F = \max_{\beta \in S} \langle W, \beta\rangle_F \}$ and $\langle\cdot,\cdot\rangle_F$ is
the Frobenius product.
The following example shows that Theorem~\ref{th:stable-initial-form} on stability preservation under taking the initial form for any non-zero functional $\w$ does not generalize to the case of psd-stability. 

\begin{example}
	The polynomial $f\in \C[Z]$ given by 
	\[ 
	f(Z)=\det \begin{pmatrix}
		z_{11} & z_{12} & z_{13} \\
		z_{12} & z_{22} & z_{23} \\
		z_{13} & z_{23} & z_{33}
	\end{pmatrix} = z_{11}z_{22}z_{33} - z_{11}z_{23}^2 - z_{22}z_{13}^2-  z_{33}z_{12}^2+ 2 z_{12}z_{13}z_{23}.
	\]
is a psd-stable polynomial. However, taking the initial form $\initial_W(f)$ for 
	\[
	W=\begin{pmatrix}
		4 & 4 & 6 \\
		4 & 4 & 6 \\
		6 & 6 & 0
	\end{pmatrix}
	\]
	yields $\initial_W(f)=  - z_{11}z_{23}^2 - z_{22}z_{13}^2+ 2 z_{12}z_{13}z_{23}$, which vanishes at $Z= i I_3$.  Since the imaginary part of
	$i I_3$ is a positive definite matrix, $\initial_W(f)$ is not psd-stable.
\end{example}

To answer the natural question of whether psd-stability is preserved by passing over to the initial form with respect to certain symmetric matrices, we show that it is enough for $W$ to be positive definite.

For $\lambda>0$ and matrices $W\in \sym_n$, let $\lambda^W$ denote the operation given by $(\lambda^W)_{ij}:=\lambda^{w_{ij}}$. Furthermore, for two matrices $A,B\in\sym_n$ let $A\circ B$ denote the \emph{Hadamard product} of $A$ and $B$ with $(A\circ B)_{ij}=a_{ij}\cdot b_{ij}$. Generalizing the notation $|\cdot|$ for vectors,
we write $|\alpha|=\sum_{1\leq i,j\leq n} |\alpha_{ij}|$ for an exponent matrix $\alpha$.

\begin{lemma}\label{le:initial-form-psd}
	Let $f\in\C[Z]$ be psd-stable and let $W\in\sym_n$ be such that there exists some $\lambda_0 >0$ such that for every $\lambda>\lambda_0$, $\lambda^W$ is positive definite. Then $\initial_W(f)$ is psd-stable.
\end{lemma}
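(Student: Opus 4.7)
The plan is to mirror the proof of Theorem~\ref{th:stable-initial-form}, replacing the coordinate-wise scaling with a Hadamard scaling that is compatible with the psd-cone. First I would set $\varphi := \max\{\langle W,\alpha\rangle_F : \alpha \in \supp(f)\}$ and, for $\lambda > \lambda_0$, introduce the rescaled polynomial
\[
  f_\lambda(Z) \ := \ \lambda^{-\varphi} \, f(\lambda^W \circ Z).
\]
The identity $(\lambda^W \circ Z)^\alpha = \lambda^{\langle W,\alpha\rangle_F} Z^\alpha$ yields
\[
  f_\lambda(Z) \ = \ \sum_{\alpha \in \supp(f)} c_\alpha \, \lambda^{\langle W,\alpha\rangle_F - \varphi} \, Z^\alpha,
\]
where the exponent of $\lambda$ vanishes exactly on $S_W$ and is strictly negative off $S_W$.

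The core step is to show that $f_\lambda$ is psd-stable for every $\lambda > \lambda_0$. Here I would invoke the Schur product theorem in its sharp form: the Hadamard product of two real symmetric positive definite matrices is again positive definite. Since $\lambda^W$ is a real symmetric positive definite matrix by hypothesis, and $\Im(\lambda^W \circ Z) = \lambda^W \circ \Im(Z)$, one has $\Im(\lambda^W \circ Z) \succ 0$ whenever $\Im(Z) \succ 0$. Psd-stability of $f$ then forces $f(\lambda^W \circ Z) \neq 0$ throughout the Siegel upper half-space, so $f_\lambda$ is psd-stable.

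To pass to the limit, let $\delta := \varphi - \max\{\langle W,\alpha\rangle_F : \alpha \in \supp(f) \setminus S_W\} > 0$. Then every non-leading term in $f_\lambda - \initial_W(f)$ is dominated by $\lambda^{-\delta}$ times a constant on any compact set of matrices, so $f_\lambda \to \initial_W(f)$ uniformly on compacts as $\lambda \to \infty$. Applying Hurwitz' Theorem~\ref{th:hurwitz} on the open connected Siegel upper half-space yields that $\initial_W(f)$ is either nowhere vanishing on that set, i.e.\ psd-stable, or identically zero; the latter is impossible since $\initial_W(f)$ collects the nonzero coefficients of $f$ indexed by the nonempty set $S_W$.

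The main obstacle is the preservation statement in the second step: everything hinges on finding a scaling operation whose action on the imaginary part sends positive definite matrices to positive definite matrices, and the sharp version of the Schur product theorem is precisely what delivers this. This also clarifies why the positive definiteness of $\lambda^W$ is the natural hypothesis: it is exactly the condition that makes the entry-wise rescaling $Z \mapsto \lambda^W \circ Z$ a psd-stability preserver, after which the argument reduces to the same Hurwitz-style limit as in the coordinate case.
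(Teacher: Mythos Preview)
Your proof is correct and follows essentially the same approach as the paper: both define $f_\lambda(Z)=\lambda^{-\varphi}f(\lambda^W\circ Z)$, use the Schur product theorem to see that $\lambda^W\circ\Im(Z)\succ 0$ whenever $\Im(Z)\succ 0$ so that $f_\lambda$ is psd-stable, and then pass to the limit via Hurwitz' Theorem. Your version is slightly more explicit in verifying the monomial identity and in ruling out the identically-zero alternative at the end, but the argument is the same.
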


\begin{proof}
	The Schur product theorem states that the Hadamard product of the two positive definite matrices is positive definite. Thus we have $\lambda^W\circ A\succ 0$ for all $A\succ 0$ and $\lambda>\lambda_0$. Let $\varphi=\max\left\{
	\langle \alpha,W \rangle_F:\alpha\in \text{supp}(f)
	\right\}$ 
	and define the polynomial $f_\lambda(Z):=\frac{1}{\lambda^\varphi}f(\lambda^W\circ Z)$. This is psd-stable for any $\lambda > \lambda_0$, since the positive semi-definiteness of the imaginary part is preserved due to the previous observation. Let $\mu:=\max\{\langle \alpha, W\rangle:\alpha\in\text{supp}(f), \langle \alpha,W\rangle<\varphi\}$ and 
	$\delta:=\varphi-\mu>0$. Now, for any compact subset $C\in\sym_n$,
	\begin{align*}
		\lim\limits_{\lambda\rightarrow \infty}\underset{Z\in C}{\sup} 
		\left| f_\lambda(Z) - \initial_W(f)(Z) \right|	
		& \leq  
		\lim\limits_{\lambda\rightarrow \infty}\underset{Z\in C}{\sup} 
		\sum\nolimits_{\langle \alpha,W \rangle < \varphi} \left| \frac{1}{\lambda^{\delta}} c_\alpha Z^\alpha \right| \\ 
		& = \ 
		\lim\limits_{\lambda\rightarrow \infty}\frac{1}{\lambda^{\delta }} \underset{Z\in C}{\sup} 
		\sum\nolimits_{\langle \alpha,W \rangle < \varphi} \left|  c_\alpha Z^\alpha \right| \ = \ 0,	
	\end{align*}
	since the norm in the last equality is bounded. By Hurwitz' Theorem~\ref{th:hurwitz}, $\initial_W(f)$ is psd-stable.
\end{proof}

\begin{theorem}\label{th:initial-form-W-pd}
	Let $f\in\C[Z]$ be psd-stable and $W\in\S_n$ be positive definite, then $\initial_W(f)$ is psd-stable.
\end{theorem}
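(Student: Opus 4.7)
My plan is to reduce the theorem directly to Lemma~\ref{le:initial-form-psd}, which already does the heavy lifting: it shows that $\initial_W(f)$ is psd-stable as long as $\lambda^W$ is positive definite for all sufficiently large $\lambda$. Thus the entire task boils down to establishing the following purely matrix-analytic statement: if $W\in\sym_n$ is positive definite, then there exists $\lambda_0>0$ such that the Hadamard exponential matrix $\lambda^W=(\lambda^{w_{ij}})_{1\leq i,j\leq n}$ is positive definite for every $\lambda>\lambda_0$.

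To prove this reduction statement, I would use a diagonal rescaling that isolates the dominant terms. First I would note that positive definiteness of $W$ forces $w_{ii}>0$ for all $i$, and that each $2\times 2$ principal minor of $W$ has positive determinant, so $w_{ij}^2 < w_{ii}w_{jj}$ for $i\neq j$. Combined with the AM-GM inequality, this yields the strict inequality
\[
 w_{ij} \;\leq\; |w_{ij}| \;<\; \sqrt{w_{ii}w_{jj}} \;\leq\; \tfrac{w_{ii}+w_{jj}}{2} \qquad (i\neq j).
\]
Next I would introduce the diagonal matrix $D_\lambda:=\diag(\lambda^{w_{11}/2},\ldots,\lambda^{w_{nn}/2})$, which is invertible and positive definite for every $\lambda>0$, and consider the congruence $D_\lambda^{-1}\lambda^W D_\lambda^{-1}$. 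Its diagonal entries are identically $1$, while its off-diagonal entries equal $\lambda^{w_{ij}-(w_{ii}+w_{jj})/2}$, whose exponent is strictly negative by the previous display. Therefore $D_\lambda^{-1}\lambda^W D_\lambda^{-1}\to I_n$ entrywise as $\lambda\to\infty$. Since the set of positive definite matrices is open in $\sym_n$, the congruence is positive definite for all sufficiently large $\lambda$, and so $\lambda^W$ is positive definite for those $\lambda$, as required.

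I do not expect any genuine obstacle: the bulk of the analytic work (the Hurwitz-type limit argument combined with the Schur product theorem) has already been carried out in Lemma~\ref{le:initial-form-psd}, and the remaining matrix-analytic lemma is elementary. The only point demanding care is ensuring that the inequality $w_{ij}<(w_{ii}+w_{jj})/2$ is strict for $i\neq j$, which is where the positive definiteness of $W$ (as opposed to mere positive semidefiniteness) is used.
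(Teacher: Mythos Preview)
Your proof is correct and, like the paper, reduces everything to verifying the matrix-analytic hypothesis of Lemma~\ref{le:initial-form-psd}. However, your verification of that hypothesis is genuinely different from the paper's. The paper observes that for a positive definite $W$ the Hadamard powers $W^{\circ k}$ are positive definite for all $k\ge 1$ by the Schur product theorem, so the Hadamard exponential $\exp[W]=\sum_{k\ge 0}W^{\circ k}/k!$ is positive definite; applying this to $\ln(\lambda)\cdot W$ yields $\lambda^W=\exp[\ln(\lambda)W]\succ 0$ for every $\lambda>1$, giving the explicit constant $\lambda_0=1$. Your diagonal-rescaling argument instead exploits only the $2\times 2$ minor inequalities $w_{ij}<\tfrac{1}{2}(w_{ii}+w_{jj})$ to show $D_\lambda^{-1}\lambda^W D_\lambda^{-1}\to I_n$, and then invokes openness of the positive definite cone. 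This is more elementary and entirely self-contained (no Hadamard exponential or Schur product theorem needed beyond what already appears inside Lemma~\ref{le:initial-form-psd}), at the mild cost of an inexplicit $\lambda_0$; since the lemma only asks for \emph{some} $\lambda_0$, nothing is lost.
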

\begin{proof}
	Let $W\in\S_n$ be positive definite. Then $W^{\circ k}$, the $k$-fold Hadamard product of $W$, is positive definite for all $k \ge 1$ and so is $\exp[W]:=\sum_{k=0}^\infty \frac{W^{\circ k}}{k!}$, with the convention that $W^{\circ 0}$ is the all-ones matrix. For $\lambda>1$, we have $\ln(\lambda)\cdot W\succ 0$. Therefore,  
	\begin{equation*}
		\exp[\ln(\lambda)\cdot W]
		=(e^{w_{ij}\ln(\lambda)})_{ij}
		=(\lambda^{w_{ij}})_{ij} = \lambda^W
	\end{equation*} is positive definite. The claim now follows from Lemma~\ref{le:initial-form-psd} with $\lambda_0=1$. 
\end{proof}

\section{Combinatorics of psd-stable polynomials\label{se:combinatorics}}

This section is about combinatorial properties of the support of psd-stable polynomials,
inspired by the results
 in \cite{braenden-hpp, cos-2004,rincon-vinzant-yu}
 on the support of stable polynomials listed in 
 Sections~\ref{se:introduction} and~\ref{se:prelim}. 
 Theorem~\ref{th:struc-thm} gives
 a necessary condition on the support of any psd-stable polynomial.
 In Sections~\ref{se:binomials-non-mixed} and~\ref{sse:pol-of-det}, 
 we characterize psd-stability of binomials and non-mixed polynomials
 and the class of \emph{polynomials
 of determinants}.
 Finally, Section~\ref{sse:comb-gen-psd} discusses some aspects
 on the support of general psd-stable polynomials, provides
 a conjecture and verifies this conjecture for some special
 families of polynomials.
 We sometimes write both $z_{ij}$ and $z_{ji}$ with some $i \neq j$, but both denote the same variable $z_{ij}$ with $i \le j$, as explained at the beginning of
Section~\ref{se:psd-preservers}.

\begin{theorem}\label{th:struc-thm}
If an off-diagonal variable $z_{ij}$ (where $i<j$) occurs in a psd-stable polynomial $f\in \C[Z]$, then the corresponding diagonal variables $z_{ii}$ and $z_{jj}$ must also occur in $f$.
\end{theorem}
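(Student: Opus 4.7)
The plan is to argue by contradiction, reducing to the case $n=2$ via iterated specialization and then invoking the fundamental theorem of algebra. By the symmetry between $i$ and $j$ (via Lemma~\ref{le:psd-preserv-general}(f) applied to the transposition $(i,j)$), it suffices to prove that if $z_{ii}$ does not occur in $f$ then $z_{ij}$ does not occur either. Assume, for contradiction, that $z_{ij}$ appears in $f$ while $z_{ii}$ does not.

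For the reduction, I apply Lemma~\ref{le:psd-preserv-general}(e) iteratively, once for each $k\in[n]\setminus\{i,j\}$, fixing $z_{kk}$ to a value in $\mathcal{H}$ and every off-diagonal variable $z_{kl}$ (with $l$ still active and $l\neq k$) to a real value. Each step preserves psd-stability, and after $n-2$ steps we obtain a psd-stable polynomial $h\in\C[Z]$ on $\sym^{\C}_{2}$ in the variables $z_{ii},z_{jj},z_{ij}$. Since specialization only substitutes constants for the other variables, $z_{ii}$ still does not appear in $h$. The delicate bookkeeping point is to choose the specialization values so that $z_{ij}$ survives in $h$: writing
\[
f(Z) \ = \ \sum_{a,b,k} z_{ii}^{a}z_{jj}^{b}z_{ij}^{k}\, g_{a,b,k}(T),
\]
with $T$ the tuple of all remaining variables, the hypothesis on $f$ yields indices $(a,b,k)$ with $k\ge 1$ and $g_{a,b,k}(T)\not\equiv 0$. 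The admissible set of specialization values $\mathcal{H}^{n-2}\times\R^{M}$ (where $M$ counts the relevant off-diagonal variables) is Zariski dense in the corresponding complex affine space, since $\mathcal{H}$ and $\R$ are each Zariski dense in $\C$; hence $g_{a,b,k}$ cannot vanish identically on it. Picking evaluation values where it does not vanish ensures that the monomial $z_{ij}^{k}$ persists in $h$.

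To close with a contradiction I use the structure of psd-stability on $\sym^{\C}_{2}$: $h(a_{jj},a_{ij})\neq 0$ whenever there exists some $a_{ii}\in\C$ making the imaginary part of $\bigl(\begin{smallmatrix} a_{ii} & a_{ij}\\ a_{ij} & a_{jj}\end{smallmatrix}\bigr)$ positive definite. For any $a_{ij}\in\C$ and any $a_{jj}\in\mathcal{H}$, choosing $\Im(a_{ii})$ sufficiently large does the job, so $h$ has no zero on $\mathcal{H}\times\C$. On the other hand, writing $h=\sum_{k=0}^{N} h_{k}(z_{jj})\, z_{ij}^{k}$ with $h_{N}\not\equiv 0$ and $N\ge 1$, I pick $w\in\mathcal{H}$ with $h_{N}(w)\neq 0$; then $h(w,\cdot)$ is a non-constant univariate polynomial in $z_{ij}$ and possesses a complex root by the fundamental theorem of algebra, contradicting the non-vanishing of $h$ on $\mathcal{H}\times\C$.

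I expect the main technical obstacle to be the reduction bookkeeping, namely verifying that the specialization values can be chosen simultaneously in the restricted hybrid domain $\mathcal{H}^{n-2}\times\R^{M}$ so that a prescribed non-zero coefficient polynomial does not vanish; the remaining steps are a direct application of Lemma~\ref{le:psd-preserv-general}(e) combined with a straightforward univariate argument.
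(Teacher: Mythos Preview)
Your argument is correct. Two small points: the preserver you invoke is part~(d) (Specialization) of Lemma~\ref{le:psd-preserv-general}, not part~(e); and in your decomposition $f=\sum_{a,b,k} z_{ii}^a z_{jj}^b z_{ij}^k g_{a,b,k}(T)$ the index $a$ is always zero by hypothesis, though this changes nothing.

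Your route and the paper's share the same core idea---exploit the absent diagonal variable as a free parameter whose imaginary part can be pushed large to force positive definiteness, after first pinning down a root in the off-diagonal variable via the fundamental theorem of algebra. The organizational difference is that you first reduce to $\sym_2^{\C}$ via iterated applications of Lemma~\ref{le:psd-preserv-general}(d), carrying along the Zariski-density bookkeeping to ensure $z_{ij}$ survives, and then run the contradiction in dimension~$2$; the paper instead works directly in dimension~$n$, substituting an $(n-1)\times(n-1)$ block with positive definite imaginary part and arbitrary complex off-diagonal entries in one shot, then invoking Sylvester's criterion to complete the matrix. Your version is more modular (it leverages the preserver machinery already established), while the paper's is slightly more direct and needs no iteration; both require the same genericity observation that a nonzero polynomial cannot vanish on an open (or Zariski-dense) set of admissible specialization values.
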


This mimics the basic fact about positive semidefinite matrices that if an off-diagonal entry is non-zero, the corresponding diagonal entries must also be non-zero.

\begin{proof}
We prove the contrapositive. Suppose without loss of generality that $z_{1n}$ is a variable appearing in $f$ but $z_{nn}$ is not.
We can choose an $(n-1)\times(n-1)$ complex symmetric matrix
$A$ and 
$a_{2n}, \ldots, a_{n-1,n} \in \C$
such that $\Im(A)$ is positive definite and such that substituting these values into $f$ gives a non-constant univariate polynomial $g$ in the variable $z_{1n}$. The second condition is possible because the set 
$\sym_{n-1}^{++} \times \C^{n-2}$ is an open set. Indeed, we can choose all real parts to be zero.
	
The univariate non-constant polynomial $g$ has a complex root $a_{1n}$. The assignment $z_{ij}=a_{ij}$ for all $(i,j) \neq (n,n)$ gives therefore a root of $f$ no matter what value we choose for $z_{nn}$. We now claim that
if we assign a value $a_{nn}$ with $\Im(a_{nn})$ positive and large enough, the matrix $A'$ which results from assigning these values to $Z$ has a positive definite imaginary part. 

Observe that by Sylvester's criterion of leading principle minors, it is enough to check that the determinant of $\Im(A')$ is strictly positive; the remaining leading principal minors will necessarily be positive because they are minors of $\Im(A)$, which we chose positive definite. Now, by developing the determinant along the last row, we obtain
\[
\det \Im(A') = \Im(a_{nn})\cdot \det \Im(A) + c,
\]
where $c$ is a constant. Since $\det \Im(A)$ is positive, we can choose $\Im(a_{nn})$ positive and sufficiently large so that $\det \Im(A')$ is positive.
Thus $f(A')=0$ with $\Im(A')$ positive definite, which proves that $f$ is not psd-stable.
\end{proof}
The argument used in the proof is connected to the 'positive (semi-)definite matrix completion problem', see for example \cite[Section 3.5]{bgf-1994}.
In the special case of binomials Theorem~\ref{th:struc-thm} can be extended as follows.

\begin{lemma}\label{le:struc-bin}
Let $f(Z)=c_\alpha Z^\alpha+c_\beta Z^\beta$ be a psd-stable binomial. If the two monomials $Z^\alpha$ and $Z^\beta$ do not have a common factor, then either both consist only of diagonal variables, or one only of diagonal and the other only of off-diagonal variables.
\end{lemma}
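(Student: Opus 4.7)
The plan proceeds in two stages. First, I would reduce to the situation where one of the two monomials, say $Z^\alpha$, is a diagonal monomial: if $\alpha = 0$ then $Z^\alpha = 1$, a diagonal monomial by convention, and otherwise $f(0) = 0$, so Corollary~\ref{co:psd-reduction-property}(b) implies that $f$ contains a diagonal monomial; since $f$ is a binomial, one of its two terms must be that diagonal monomial, and I label it $Z^\alpha$.

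With $Z^\alpha$ fixed as diagonal, I would argue by contradiction: assume $Z^\beta$ contains both a diagonal variable $z_{kk}$ with exponent $a \geq 1$ and an off-diagonal variable $z_{ij}$ ($i<j$) with exponent $d \geq 1$. The no-common-factor assumption forces $z_{kk} \notin Z^\alpha$. The goal is to construct a matrix $Z$ with $\Im(Z) \succ 0$ and $f(Z) = 0$, contradicting psd-stability.

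The construction would substitute: $z_{\ell\ell} := iT$ for every $\ell \neq k$ with a real parameter $T > 0$; $z_{kk} := u$ with $u \in \mathcal{H}$ to be chosen; $z_{ij} := \zeta$ with $\zeta \in \mathbb{C} \setminus \{0\}$ to be chosen; $z_{pq} := 0$ for the off-diagonal variables not appearing in $Z^\beta$ that are distinct from $z_{ij}$; and $z_{pq} := r_{pq}$ for arbitrary nonzero real values $r_{pq}$ for the remaining off-diagonal variables appearing in $Z^\beta$. Under this substitution, the equation $f(Z) = 0$ collapses, after collecting constants, to a single equation
\[
u^a \zeta^d = W(T),
\]
where $W(T) \in \mathbb{C} \setminus \{0\}$ depends polynomially on $T$. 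Since $a, d \geq 1$, the image of the map $(u, \zeta) \mapsto u^a \zeta^d$ on $\mathcal{H} \times (\mathbb{C} \setminus \{0\})$ is all of $\mathbb{C} \setminus \{0\}$, easily verified by independently choosing $|\zeta|$, $\arg(\zeta)$ and the $a$-th root branch of $u$. Hence the displayed equation always admits a solution.

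The main obstacle is reconciling the chosen $u, \zeta$ with the positive-definiteness constraint on $\Im(Z)$. With the described substitution, $\Im(Z)$ is diagonal except at the positions $(i,j), (j,i)$ where it equals $\Im(\zeta)$, so positive-definiteness reduces to a single $2 \times 2$ determinant inequality involving $T$, $\Im(u)$ and $\Im(\zeta)^2$ (with slightly different form according to whether $k \in \{i,j\}$). I would overcome this by rescaling $|u|$ and $|\zeta|$ by suitable powers of $T$ so that the equation $u^a \zeta^d = W(T)$ is preserved, while $T$ can then be taken either sufficiently large or sufficiently small, depending on the sign of a fixed integer combination of the exponents, to satisfy the determinant inequality.
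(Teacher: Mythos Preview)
Your approach is correct but takes a substantially longer route than the paper's proof. The paper dispatches the lemma in a few lines: first, both monomials cannot be off-diagonal, since Theorem~\ref{th:struc-thm} forces the diagonal variables $z_{ii},z_{jj}$ to appear once any $z_{ij}$ does. Second, if (say) $Z^\beta$ were mixed, pick a diagonal variable $z_{jj}$ occurring in it; the no-common-factor hypothesis gives $\partial Z^\alpha/\partial z_{jj}\equiv 0$, so $\partial f/\partial z_{jj}=c_\beta\beta_{jj}Z^{\beta'}$ is a single monomial still containing an off-diagonal variable. By Lemma~\ref{le:conic-deriv} (with $V=B_{jj}\in\sym_n^+$) this monomial is psd-stable, yet it vanishes at $iI_n$ --- contradiction. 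No explicit root construction is needed.

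Your argument instead builds an explicit root with positive definite imaginary part. The first reduction (one monomial is diagonal) via Corollary~\ref{co:psd-reduction-property}(b) is fine, though your phrasing ``otherwise $f(0)=0$'' tacitly assumes $\beta\neq 0$ as well; this is harmless by relabeling. The main construction is sound: after the substitution you describe, $f(Z)=0$ indeed reduces to $u^a\zeta^d=W(T)$ with $W(T)\neq 0$ (not always polynomial in $T$, but that is irrelevant). The surjectivity of $(u,\zeta)\mapsto u^a\zeta^d$ on $\mathcal H\times(\C\setminus\{0\})$ is clear. The rescaling step can be made precise by writing $u=T^p u_0$, $\zeta=T^q\zeta_0$ with $ap+dq=|\alpha|-e$; then the $2\times 2$ determinant condition becomes a single inequality of the form $T^{s}>\text{const}$ for an exponent $s$ that is nonzero for generic $(p,q)$ on this line, so one of $T\to\infty$ or $T\to 0^+$ works.

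What each approach buys: the paper's derivative argument is short, uses only the preserver Lemma~\ref{le:conic-deriv}, and immediately rules out a mixed monomial regardless of what the other monomial looks like. Your construction is more hands-on and independent of the derivative preserver, but requires a case analysis on whether $k\in\{i,j\}$ and a scaling argument to reconcile the algebraic equation with the positive-definiteness constraint. If you want an efficient proof for the paper, the derivative trick is preferable; your method would be more useful if one later needed explicit certificates of non-psd-stability.
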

\begin{proof}
$Z^\alpha$ and $Z^\beta$ cannot both be off-diagonal monomials, since this contradicts Theorem~\ref{th:struc-thm}. It remains to be shown that neither monomial can contain both diagonal and off-diagonal variables. Suppose towards a contradiction that one of the two monomials did contain both, w.l.o.g. $Z^\beta$, and choose $j$ such that $\beta_{jj}>0$. Since the monomials of $f$ do not share any variable, $\frac{\partial Z^\alpha}{\partial z_{jj}}\equiv 0$,
where we use the derivative notation $\frac{\partial}{\partial z_{ij}}$ 
on the symmetric matrix space as introduced at the beginning of
Section~\ref{se:psd-preservers}.

Hence, $g(Z):=\frac{\partial f}{\partial z_{jj}}=c_\beta\beta_{jj}Z^{\beta'}$ is a non-zero monomial with $\beta'_{kl}=\beta_{kl}$ for $(k,l)\neq (j,j)$ and $\beta'_{jj}=\beta_{jj}-1$, that is, $g$ is a monomial containing an off-diagonal variable. Thus $g(i\cdot I_n)=0$, which is a contradiction since $g$ is psd-stable by Lemma~\ref{le:conic-deriv}.
\end{proof}

\subsection{Binomials and non-mixed 
polynomials\label{se:binomials-non-mixed}}

We give characterizations
of the support 
of psd-stable binomials. 
Some of the results will be stated
for the family of \emph{non-mixed polynomials}, which includes irreducible binomials thanks to Lemma~\ref{le:struc-bin}. 

\begin{definition}
We call a polynomial $f\in \C[Z]$ 
\emph{non-mixed} if every monomial that occurs in $f$ either consists only of diagonal variables or only of off-diagonal variables. We always write such a non-mixed polynomial as $f= \sum_{\alpha \in A} c_\alpha Z^\alpha + \sum_{\beta \in B} c_\beta Z^\beta$, where $A$ refers to the exponent matrices of
diagonal monomials and $B$ refers to the exponent matrices of off-diagonal monomials.	
\end{definition}

 It is useful to consider this larger family because it is closed under directional derivatives while the family of binomials is not. The following two theorems are the main results in this subsection.

\begin{theorem}\label{th:hom-non-mix}
	Let $f(Z)=\sum_{\alpha\in A} c_\alpha Z^\alpha + \sum_{\beta\in B} c_\beta Z^\beta$ be a homogeneous non-mixed polynomial of degree $d\geq 3$ and assume $c_\beta \neq 0$ for some $\beta \in B$. Then f is not psd-stable. 
\end{theorem}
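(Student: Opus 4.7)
The plan is to argue by contradiction: assuming $f$ is psd-stable, I will exhibit a symmetric matrix $Z$ with $\Im(Z)\succ 0$ at which $f$ vanishes. The decisive family of substitutions is
\[
Z \ = \ \zeta\, I_n + \eta\, E,
\]
where $\zeta,\eta\in\C$ are parameters to be chosen and $E\in\sym_n$ is a fixed real symmetric matrix with vanishing diagonal. On this $Z$, every diagonal variable equals $\zeta$, while the off-diagonal variable $z_{kl}$ equals $\eta E_{kl}$. Since $f$ is homogeneous of degree $d$ and non-mixed, each diagonal monomial collapses to $\zeta^d$ and each off-diagonal monomial $Z^\beta$ to $\eta^d \prod_{k<l} E_{kl}^{2\beta_{kl}}$, yielding
\[
f(Z) \ = \ c_0\, \zeta^d + h(E)\, \eta^d,
\]
where $c_0 := \sum_{\alpha\in A} c_\alpha = f_\diag(I_n)$ and $h(E) := \sum_{\beta\in B} c_\beta \prod_{k<l} E_{kl}^{2\beta_{kl}}$.

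I would first verify that both $c_0$ and $h$ are non-zero: $c_0\neq 0$ by Corollary~\ref{co:psd-reduction-property}(c1), while $h$, viewed as a polynomial in the off-diagonal entries $E_{kl}$, is not identically zero because the monomials $\prod_{k<l} E_{kl}^{2\beta_{kl}}$ are pairwise distinct as $\beta$ ranges over $B$ and by hypothesis at least one $c_\beta$ is non-zero. Hence $h(E)\neq 0$ for a generic real symmetric matrix $E$ with vanishing diagonal; fix such an $E$. Setting $\eta = \lambda\zeta$, the equation $f(Z)=0$ becomes $\lambda^d = -c_0/h(E)$, so $\lambda$ must be one of the $d$ complex $d$-th roots of the non-zero number $-c_0/h(E)$.

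The hypothesis $d\geq 3$ enters exactly here. The $d$-th roots of any non-zero complex number form the vertices of a regular $d$-gon on a circle, so at most two of them are real. For $d\geq 3$ I can therefore select a non-real $\lambda = a+bi$ with $b\neq 0$. Given any $y>0$, let $\zeta := -ya/b + iy$ and $\eta := \zeta\lambda$. A direct expansion gives $\zeta\lambda = -y(a^2+b^2)/b\in\R$, so $\eta$ is real (and non-zero), while $\Im(\zeta)=y>0$. Since $E$ is real,
\[
\Im(Z) \ = \ \Im(\zeta)\, I_n + \Im(\eta)\, E \ = \ y\, I_n \ \succ \ 0,
\]
yet $f(Z) = \zeta^d(c_0 + \lambda^d h(E)) = 0$ by the choice of $\lambda$. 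This contradicts the assumed psd-stability.

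The main subtlety will be pinpointing why $d\geq 3$ is needed: for $d=2$ it may happen that every square root of $-c_0/h(E)$ is real (for instance, $f=\det Z$ has $-c_0/h(E) = 1/E_{12}^2 > 0$ for every admissible $E$), and the construction then breaks down, in agreement with the psd-stability of the determinant. Apart from this, verifying the collapse of $f(Z)$ on the two-parameter family, solving the single linear condition $\Im(\zeta\lambda)=0$, and checking positive definiteness of $\Im(Z)$ are elementary.
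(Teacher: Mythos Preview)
Your argument is correct and takes a genuinely different route from the paper's proof. The paper first reduces (via hyperbolicity) to real coefficients, then treats the base case $d=3$ by substituting a matrix with all diagonal entries equal to $s+i$ and all off-diagonal entries equal to a single real $t$; this leads to $(s+i)^3 + b t^3 = 0$, which is solved by hand after splitting into real and imaginary parts, with a separate perturbation argument when $b=\sum_\beta c_\beta$ happens to vanish. The case $d>3$ is then reduced to $d=3$ by an induction taking directional derivatives $\partial_{V^{(ij)}}$.

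Your substitution $Z=\zeta I_n+\eta E$ with a \emph{generic} real zero-diagonal $E$ neatly avoids both the $\sum_\beta c_\beta=0$ case split (since $h$ is a non-zero polynomial in the $E_{kl}$ and hence non-vanishing generically) and the induction on $d$: the single observation that for $d\ge 3$ the $d$-th roots of $-c_0/h(E)$ cannot all be real handles every degree at once, and your choice of $\zeta$ forcing $\eta=\zeta\lambda\in\R$ while $\Im(\zeta)>0$ produces the forbidden root directly. The paper's approach has the minor virtue of yielding more explicit numerical witnesses (e.g.\ $s=1/\sqrt{3}$), but yours is shorter, works uniformly in $d$, and does not require the preliminary reduction to real coefficients.
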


The following theorem is a complete characterization of the 
support of psd-stable binomials, analogous to the classification of stable binomials from Theorem~\ref{th:binom-class-stable}.

\begin{theorem}\label{th:bin-class-dia}
	Every psd-stable binomial
	 is of one of the following forms:
	\begin{enumerate}[label=\alph*)]
		\item Only diagonal variables appear in $f$ and $f$ satisfies the conditions of Theorem~\ref{th:binom-class-stable}:
		 $f(Z)=Z^\gamma (c_1 Z^{\alpha_1}+c_2 Z^{\alpha_2})$ with
		 $|\alpha_1-\alpha_2|\leq 2$ and at least one of $\alpha_1,\alpha_2$ is non-zero,
		\item $f(Z)=Z^\gamma(c_1 z_{ii}z_{jj}+c_2 z_{ij}^2)$
		with $i < j$ and $\frac{c_1}{c_2}\in\R$,
	\end{enumerate}
	where $c_1, c_2 \neq 0$ and $Z^\gamma$ is a diagonal monomial.  
\end{theorem}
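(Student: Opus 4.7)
The plan is to factor $f=Z^\gamma\,g$, where $Z^\gamma$ is the monomial gcd of the two monomials of $f$ and $g=c_1 Z^{\alpha_1}+c_2 Z^{\alpha_2}$ has coprime monomials, and then to analyze $g$ via the dichotomy of Lemma~\ref{le:struc-bin}. First I argue that $Z^\gamma$ is diagonal: if some off-diagonal variable $z_{ij}$ with $i\neq j$ divided $Z^\gamma$, then $f(iI_n)=0$ because all off-diagonal entries of $iI_n$ vanish, contradicting psd-stability since $\Im(iI_n)=I_n\succ 0$. Because a diagonal monomial does not vanish on $\{\Im Z\succ 0\}$ (each diagonal variable lies in the open upper half-plane there), the quotient $g$ is itself psd-stable. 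Applying Lemma~\ref{le:struc-bin} to $g$, either both $Z^{\alpha_1},Z^{\alpha_2}$ are diagonal (case (a)) or exactly one is diagonal and the other off-diagonal (case (b)).

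For case (a), Corollary~\ref{co:psd-reduction-property}(a) implies that $g=g_{\diag}$ is stable as an element of $\C[z_{11},\ldots,z_{nn}]$, and Theorem~\ref{th:binom-class-stable} then forces $\{\alpha_1,\alpha_2\}$ into one of the three listed shapes, all satisfying $|\alpha_1-\alpha_2|\le 2$ with at least one exponent non-zero. Multiplying back by $Z^\gamma$ gives the form of case (a).

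Case (b) is the substantive part. Label the monomials so that $Z^{\alpha_1}$ is diagonal and $Z^{\alpha_2}$ off-diagonal. Theorem~\ref{th:struc-thm} applied to $g$ forces every off-diagonal $z_{ij}$ occurring in $Z^{\alpha_2}$ to bring $z_{ii}$ and $z_{jj}$ into $g$, and hence into $Z^{\alpha_1}$; since $i\neq j$ this yields $|\alpha_1|\ge 2$. The next step shows that $g$ is homogeneous. Assuming $|\alpha_1|\neq|\alpha_2|$, set $z_{kk}=re^{i\theta}$ for $k$ in the support of $\alpha_1$ with $\theta\in(0,\pi)$ and $r>0$, set $z_{kk}=i$ for all other diagonal indices, assign a common value $w\in\C$ to each off-diagonal variable occurring in $Z^{\alpha_2}$, and set every remaining off-diagonal to $0$. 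The equation $g(Z)=0$ then reads $w^{|\alpha_2|}=-\tfrac{c_1}{c_2}r^{|\alpha_1|}e^{i|\alpha_1|\theta}$, so $|w|=|c_1/c_2|^{1/|\alpha_2|}\,r^{|\alpha_1|/|\alpha_2|}$. Meanwhile $\Im Z$ is block diagonal, and on the block indexed by the support of $\alpha_1$ it equals $r\sin\theta\cdot I+\Im w\cdot E$ for a fixed symmetric $0/1$ matrix $E$ determined by $\alpha_2$; hence $\Im Z\succ 0$ is guaranteed once $r\sin\theta>|w|\cdot\|E\|$, which rearranges to $r^{\,1-|\alpha_1|/|\alpha_2|}$ being sufficiently large. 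Since the exponent is non-zero by non-homogeneity, one may take $r$ large (when $|\alpha_1|<|\alpha_2|$) or small (when $|\alpha_1|>|\alpha_2|$) to simultaneously achieve $g(Z)=0$ and $\Im Z\succ 0$, contradicting psd-stability. Thus $g$ is homogeneous.

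Now $g$ is homogeneous non-mixed with a non-trivial off-diagonal monomial, so Theorem~\ref{th:hom-non-mix} forces $\deg g\le 2$; combined with $|\alpha_1|\ge 2$, the degree is exactly $2$. A short case analysis of degree-$2$ diagonal and off-diagonal monomials, subject to the constraint from Theorem~\ref{th:struc-thm} that the indices appearing in $Z^{\alpha_2}$ lie inside the two diagonal indices of $Z^{\alpha_1}$, leaves only $Z^{\alpha_1}=z_{ii}z_{jj}$ and $Z^{\alpha_2}=z_{ij}^2$ for some $i<j$. Finally, to see that $c_1/c_2\in\R$, I apply Lemma~\ref{le:psd-preserv-general}(e): after restricting to the principal $\{i,j\}$-block via the minorization in Lemma~\ref{le:psd-preserv-general}(c), fix $z_{ij}=a\in\R$ and identify $z_{ii}=z_{jj}=z$, producing the univariate polynomial $c_1z^2+c_2a^2$, which must be stable in $z$ for every $a\in\R$. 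Its roots $\pm a\sqrt{-c_2/c_1}$ can simultaneously lie in the closed lower half-plane only if $\sqrt{-c_2/c_1}\in\R$, i.e., $c_1/c_2\in\R_{\le 0}\subseteq\R$. The main obstacle is exactly the non-homogeneity step, where $g(Z)=0$ and $\Im Z\succ 0$ must be arranged at once; the degree mismatch is precisely what lets the free parameter $r$ be driven in the right direction so that the diagonal term $r\sin\theta$ dominates the off-diagonal perturbation $|\Im w|$.
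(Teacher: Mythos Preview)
Your proof is correct and follows the same overall scaffold as the paper (factor out the monomial gcd, invoke the dichotomy of Lemma~\ref{le:struc-bin}, and use Theorem~\ref{th:hom-non-mix} to cap the degree at~$2$). The differences are methodological. To exclude the non-homogeneous case with an off-diagonal monomial, the paper appeals to the separately proved Lemma~\ref{le:non-hom-bin}, whose argument first bounds $\big||\alpha|-|\beta|\big|\le 2$ via iterated directional derivatives (Lemma~\ref{le:binom-dist}) and then solves the real/imaginary system for the explicit test matrix $S$ case-by-case in degrees~$2$ and~$3$; your scaling construction with the parameter $r$ handles all degree gaps in one stroke and bypasses both auxiliary lemmas, at the cost of a mild spectral-norm estimate. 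For $c_1/c_2\in\R$, the paper uses the hyperbolicity observation of Remark~\ref{re:real-coeff}, while your reduction via Lemma~\ref{le:psd-preserv-general}(c),(e) to the univariate polynomial $c_1z^2+c_2a^2$ is a valid alternative and in fact yields the sharper conclusion $c_1/c_2<0$.
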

	This theorem shows that the only psd-stable binomials with off-diagonal variables are those described in b): in particular, at most one off-diagonal variable occurs in a psd-stable binomial, and it has degree exactly $2$.

The following lemma is a first step towards a proof of the main theorems and shows that the exponents of psd-stable binomials cannot be far apart. The proof relies on taking derivatives in direction $V^{(ij)}$, with $i \neq j$, which denotes the $n\times n$ matrix with 
$v_{ii} = v_{jj} = v_{ij} = v_{ji} = 1$ and
$0$ elsewhere. In terms of the basis matrices $B_{ij}$ introduced
at the beginning of Section~\ref{se:psd-preservers}, we have
$V^{(ij)} = B_{ii} + B_{jj} + 2 B_{ij}$.

\begin{lemma}\label{le:binom-dist}
	Let $f(Z)=c_\alpha Z^\alpha+c_\beta Z^\beta$ be a psd-stable binomial (thus 
	$c_{\alpha}, c_{\beta} \neq 0$).
	 Then $\left| \left| \alpha \right| - \left| \beta\right| \right|\leq 2$. 
\end{lemma}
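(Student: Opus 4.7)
The plan is to proceed by cases on the diagonal versus off-diagonal structure of $Z^\alpha$ and $Z^\beta$. First I would reduce to the case that $Z^\alpha$ and $Z^\beta$ share no common monomial factor, observing that any greatest common monomial factor $Z^\gamma$ must itself be a diagonal monomial: otherwise $Z^\gamma$ vanishes at $Z=iI_n$, forcing $f(iI_n)=0$ and contradicting psd-stability. Since a diagonal monomial is nonzero whenever $\Im(Z)\in\sym^{++}_n$, the quotient $f/Z^\gamma$ is again psd-stable with the same value of $|\alpha|-|\beta|$, so I may assume $Z^\alpha$ and $Z^\beta$ have disjoint variables. Lemma~\ref{le:struc-bin} then leaves exactly two cases: both monomials diagonal, or one diagonal and one off-diagonal.

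In the diagonal-diagonal case, by the Diagonalization preserver in Lemma~\ref{le:psd-preserv-general}\,a) and Corollary~\ref{co:psd-reduction-property}\,a), $f=f_\diag$ is a usual stable polynomial in the variables $z_{11},\ldots,z_{nn}$, and the classification of stable binomials in Theorem~\ref{th:binom-class-stable} yields three possible supports, each of which satisfies $||\alpha|-|\beta||\leq 2$ directly.

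In the mixed case, WLOG with $Z^\alpha$ diagonal and $Z^\beta$ off-diagonal, I would in fact establish the stronger conclusion $|\alpha|=|\beta|$ by contradiction. Assuming $|\alpha|\neq|\beta|$, the strategy is to exhibit an explicit symmetric $Z$ with $\Im(Z)\in\sym^{++}_n$ and $f(Z)=0$. Concretely, set $z_{kk}=iN$ for every $k$ (with a parameter $N>0$ to be chosen), set every off-diagonal variable not appearing in $Z^\beta$ to $0$, and give every off-diagonal variable appearing in $Z^\beta$ the common value $\rho\, e^{i\theta_0}$. The equation $f(Z)=0$ collapses to the single scalar equation
\[
\rho^{|\beta|}\, e^{i|\beta|\theta_0} \;=\; -\frac{c_\alpha}{c_\beta}\, i^{|\alpha|}\, N^{|\alpha|},
\]
which is solvable with $\rho=(|c_\alpha/c_\beta|)^{1/|\beta|}\, N^{|\alpha|/|\beta|}$ and $\theta_0$ determined modulo $2\pi/|\beta|$.

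The main obstacle is then the verification that $\Im(Z)\in\sym^{++}_n$. Writing $\Im(Z)=NI_n+E$, where $E$ is a symmetric off-diagonal matrix whose nonzero entries all have absolute value at most $\rho$, a standard diagonal dominance argument yields $\Im(Z)\succ 0$ as soon as $N>(n-1)\rho$, equivalently $N^{1-|\alpha|/|\beta|} > (n-1)(|c_\alpha/c_\beta|)^{1/|\beta|}$. The exponent $1-|\alpha|/|\beta|$ is positive when $|\alpha|<|\beta|$ and negative when $|\alpha|>|\beta|$, so the inequality is satisfiable either by taking $N$ sufficiently large or sufficiently small, according to the sign of $|\alpha|-|\beta|$. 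Either regime delivers a zero of $f$ with positive definite imaginary part, contradicting psd-stability and forcing $|\alpha|=|\beta|$ in the mixed case. This is actually stronger than the stated bound $||\alpha|-|\beta||\leq 2$, which is only needed here in order to uniformize with the diagonal-diagonal case.
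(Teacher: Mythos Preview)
Your proof is correct. The reduction to coprime monomials and the diagonal--diagonal case essentially follow the paper (which invokes Br\"and\'en's jump system Theorem~\ref{th:braenden-js} rather than the binomial classification, but these are interchangeable here). The genuine divergence is in the mixed case. The paper argues in two halves: $|\beta|\le |\alpha|$ comes from the structure Theorem~\ref{th:struc-thm} after directional derivatives in $V^{(ij)}$, and $|\alpha|\le|\beta|+2$ is obtained by taking $|\beta|$ successive $V^{(ij)}$-derivatives (which preserve psd-stability by Lemma~\ref{le:conic-deriv}) to produce a polynomial in diagonal variables only whose support violates the Two-Steps Axiom. Your explicit root construction with $z_{kk}=iN$ and common off-diagonal value $\rho e^{i\theta_0}$, combined with a diagonal-dominance check tuned by the sign of $1-|\alpha|/|\beta|$, bypasses the preserver machinery entirely and in fact yields the sharper conclusion $|\alpha|=|\beta|$, which the paper only recovers later in Lemma~\ref{le:non-hom-bin}. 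The paper's route integrates cleanly with the derivative preservers developed in Sections~\ref{se:preservers} and~\ref{se:psd-preservers}; your route is more self-contained and anticipates a later step of the classification.
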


\begin{proof} We may assume that the monomials of $f$ do not have a common factor since this does neither affect $|\alpha-\beta|$ nor $||\alpha|-|\beta||$. By Lemma~\ref{le:struc-bin}, either both monomials are diagonal monomials or w.l.o.g.\ only $Z^\beta$ is an off-diagonal monomial. If both monomials are diagonal, the claim follows directly from Theorem~\ref{th:braenden-js}, because 
	psd-stable polynomials involving only diagonal variables are stable polynomials.

	Now assume that $Z^\beta$ is an off-diagonal monomial. 
	Then $|\beta|\leq |\alpha|$ follows from Theorem~\ref{th:struc-thm}
	after possibly taking derivatives in direction $V^{(ij)}$ for some $z_{ij}$ appearing in $Z^\beta$. It remains to show $|\alpha|\leq |\beta|+2$. Assume to the contrary that $|\alpha|-|\beta| \geq 3$. Choose $i$ and $j$ with $i < j$ such that $z_{ij}$ occurs in $Z^\beta$. Since 
	$\frac{\partial f}{\partial V^{(ij)}}(Z)=\left(\frac{\partial}{\partial z_{ii}}+\frac{\partial}{\partial z_{jj}}\right)(c_\alpha Z^\alpha)+\frac{\partial}{\partial z_{ij}}( c_\beta Z^\beta)$ by the computation rules at the
	beginning of Section~\ref{se:psd-preservers},
	we see that $\frac{\partial f}{\partial V^{(ij)}}(Z)$ has at most two diagonal monomials, each of degree $|\alpha|-1$, and exactly one off-diagonal monomial of degree $|\beta|-1$. By applying this procedure consecutively $|\beta|$ times, we obtain a polynomial $g(Z)=\sum_{\alpha'}c_{\alpha'}Z^{\alpha'}+c_{\beta'}$, where $\sum_{\alpha'}c_{\alpha'}Z^{\alpha'}$ is a homogeneous polynomial in diagonal variables of degree $|\alpha|-|\beta|\geq 3$ and $c_{\beta'}$ is a constant. Further $g(Z)$ is psd-stable by Lemma~\ref{le:conic-deriv}. Since $g$ does not involve any off-diagonal variables, it is a stable polynomial. This is a contradiction to Theorem \ref{th:braenden-js}, since the support of $g$ does not satisfy the 
	Two-Steps Axiom. 
\end{proof}

In the following, we show that most binomials are not psd-stable by explicitly constructing a root $S$ (of the binomial or a directional derivative of it) whose imaginary part lies in the interior of the psd-cone. This root $S$ will be a symmetric $n\times n$ matrix of the form 
\begin{equation}\label{eq:S-form}
	S=\begin{pmatrix}
		s+i & t & \cdots & t\\
		t & s+i & \ddots & \vdots \\
		\vdots & \ddots & \ddots & t \\
		t & \cdots & t & s+i
	\end{pmatrix} \ \ \text{with} \ s,t\in\R.
\end{equation}
Since $\Im(S)=I_n \succ 0$, any polynomial with root $S$ is not psd-stable. 

\begin{lemma}\label{le:non-hom-bin}
	Let $f(Z)=c_\alpha Z^\alpha+c_\beta Z^\beta$ be a binomial (thus, $c_{\alpha}, c_{\beta} \neq 0$)
	with $|\alpha|>|\beta|\geq 1$ and
	such that $Z^\alpha$ and $Z^\beta$ do not have a common factor.
	Then $f$ is not psd-stable.
\end{lemma}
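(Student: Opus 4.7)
The plan is to argue by contradiction, splitting on the structural form of a psd-stable binomial. Suppose $f$ is psd-stable. Since $Z^\alpha$ and $Z^\beta$ share no common factor, Lemma~\ref{le:struc-bin} forces one of two situations: either (A) both $Z^\alpha$ and $Z^\beta$ are purely diagonal monomials, or (B), after possibly swapping $\alpha$ and $\beta$, the monomial $Z^\alpha$ is diagonal and $Z^\beta$ off-diagonal. I would dispose of each case in turn.

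Case (A) reduces to ordinary stability. Since $f=f_{\diag}$ involves only the diagonal variables $z_{11},\ldots,z_{nn}$, Corollary~\ref{co:psd-reduction-property}(a) implies that $f$ is stable as a polynomial in those variables. Applying the coprime binomial classification (Theorem~\ref{th:binom-class-stable}) forces $\{\alpha,\beta\}$ to be one of the pairs $\{0,\e_i\}$, $\{\e_i,\e_j\}$, or $\{0,\e_i+\e_j\}$, and none of these admit the strict inequality $|\alpha|>|\beta|\geq 1$, a contradiction.

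Case (B) is the substantive part. The plan is to produce an explicit root $S\in\sym_n^{\C}$ of $f$ with $\Im(S)\succ 0$, following the ansatz~\eqref{eq:S-form} but enlarged: set all diagonal entries of $S$ equal to $u=a+bi$ with $b>0$ a free parameter, and all off-diagonal entries equal to some $t\in\R$. Then $\Im(S)=bI_n\succ 0$. Evaluating the two monomials (each diagonal variable contributes a factor of $u$ and each off-diagonal variable a factor of $t$, while $|\alpha|$ and $|\beta|$ are the respective total degrees) yields $f(S)=c_\alpha u^{|\alpha|}+c_\beta t^{|\beta|}$. Fix $t=1$; then it suffices to realize the nonzero complex number $-c_\beta/c_\alpha$ in the form $u^{|\alpha|}$ for some $u\in\mathcal{H}$. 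The hypothesis $|\alpha|>|\beta|\geq 1$ gives $|\alpha|\geq 2$, and then the power map $u\mapsto u^{|\alpha|}$ is surjective from $\mathcal{H}$ onto $\C\setminus\{0\}$: writing $u=re^{i\theta}$ with $r>0$ and $\theta\in(0,\pi)$, the argument $|\alpha|\theta$ sweeps through $(0,|\alpha|\pi)\supseteq(0,2\pi)$. Hence a suitable $u$ exists, the resulting $S$ is a root of $f$ with positive definite imaginary part, and this contradicts psd-stability.

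The main obstacle is choosing a flexible enough ansatz in case (B). The form~\eqref{eq:S-form} with imaginary part pinned to~$1$ is slightly too rigid (the image curve $\{(s+i)^{|\alpha|}\}$ does not obviously meet every ray $\{w_0t^{|\beta|}\}$ in $\C$), but allowing a free positive imaginary scale $b$ on the diagonal introduces exactly the extra degree of freedom needed to reduce the existence of a root to the surjectivity of $u\mapsto u^{|\alpha|}$ on the upper half-plane. Once this observation is in place, the remaining work — the case split via Lemma~\ref{le:struc-bin}, the reduction to the classical classification in case (A), and the monomial evaluation $f(S)=c_\alpha u^{|\alpha|}+c_\beta t^{|\beta|}$ — is routine bookkeeping.
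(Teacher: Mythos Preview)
Your route is quite different from the paper's, which first appeals to Lemma~\ref{le:binom-dist} to restrict to $|\alpha|-|\beta|\in\{1,2\}$, then iterates $V^{(ij)}$-derivatives to reduce to the base case $|\beta|=1$, and finally solves $(s+i)^{|\alpha|}+bt=0$ by explicit real casework for $|\alpha|\in\{2,3\}$. Freeing the imaginary scale on the diagonal of $S$ and reducing root-finding to surjectivity of a power map is a genuinely cleaner idea that bypasses both the derivative reduction and the casework.

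There is, however, a real gap in Case~(B). The map $u\mapsto u^{m}$ from $\mathcal{H}$ to $\C\setminus\{0\}$ is \emph{not} onto when $m=2$: writing $u=re^{i\theta}$ with $\theta\in(0,\pi)$, the argument $2\theta$ ranges over the \emph{open} interval $(0,2\pi)$ and never attains $0\pmod{2\pi}$, so the image omits the positive real ray. Hence if the diagonal degree equals $2$ and $-c_\beta/c_\alpha\in\R_{>0}$ --- for instance $f=z_{ii}z_{jj}-z_{ij}$ --- fixing $t=1$ produces no root with $\Im(u)>0$. The repair is easy once you also fix the ``after possibly swapping'' step, which is illicit as written: the hypothesis $|\alpha|>|\beta|\ge 1$ does not survive a swap, so you cannot then conclude $|\alpha|\ge 2$. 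Instead cite Lemma~\ref{le:binom-dist} (or argue via Theorem~\ref{th:struc-thm}) that under psd-stability the off-diagonal monomial has the smaller degree; thus no swap is needed, $\alpha$ is the diagonal exponent, and $|\alpha|=2$ forces $|\beta|=1$. Now simply allow $t=-1$ rather than $t=1$: this flips the target into $\R_{<0}$, which $u\mapsto u^2$ does reach. Without first establishing that the diagonal monomial has the larger degree, the subcase ``diagonal degree $2$, off-diagonal degree even and at least $4$, coefficient ratio positive real'' would remain unreachable by your real-$t$ ansatz.
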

\begin{proof}
	Assume towards a contradiction that $f$ is psd-stable.
	Since $|\alpha| > |\beta| \ge 1$ and 
	$Z^\alpha$ and $Z^\beta$ do not share a factor, we have that $|\alpha-\beta| \geq 3$. 
	If both monomials were diagonal monomials, psd-stability would imply stability, and $|\alpha-\beta| \geq 3$ would yield a contradiction to  Theorem \ref{th:braenden-js}.
	
	Now assume that $Z^\beta$ is an off-diagonal monomial. We will show that there are $s,t\in\R$ such that $S$ is a root of $f$, thus contradicting that $f$ is psd-stable. By Lemma~\ref{le:binom-dist}, the only possibly psd-stable cases are $|\alpha|=|\beta|+1$ and $|\alpha|=|\beta|+2$.

First consider the case $|\beta| = 1$. Then $|\alpha|\in\{2,3\}$. 
After substituting $S$, $f=0$ is of the form
	\begin{equation}\label{eq:binom-S}
		(s+i)^a+bt=0 \quad \text{with} \quad a:=|\alpha|\in \{2,3\}, \ s,t\in \R, \ b\in \C\setminus\{0\}.
	\end{equation}
	One may split the real and imaginary part of equation~\eqref{eq:binom-S} to obtain two 
	real equations, denoted by (Re) and (Im). First let $a=2$. If $\Im(b)\neq 0$, there is a real solution $s=\frac{\Re(b)}{\Im(b)}+\sqrt{\left(\frac{\Re(b)}{\Im(b)}\right)^2+1}$ and $t=\frac{1-s^2}{\Im(b)}$. If $\Im(b)=0$, the solution $t=\frac{1}{\Re(b)}$ and $s=0$ may be found. Now let $a=3$. If $\Im(b)\neq 0$, (Im) implies $t=\frac{1-3s^2}{\Im(b)}$, which then gives $s^3-3s+\frac{\Re(b)}{\Im(b)}(1-3s^2)$, which has a real solution. If $\Im(b)=0$, (Im) becomes $3s^2=1$, which has the real solutions $s=\pm \frac{1}{\sqrt{3}}$. 
	Substituting these into (Re) gives a linear function in $t$, which has a real solution as well.

Now consider the case $|\beta|>1$. Choose $i$ and $j$ with $i < j$ such that the variable $z_{ij}$ occurs in $f$. Since $f$ is psd-stable, its partial derivative in direction $V^{(ij)}$ is psd-stable by Lemma~\ref{le:conic-deriv}. Further $\frac{\partial f}{\partial V^{(ij)}}(Z)=\left(\frac{\partial}{\partial z_{ii}}+\frac{\partial}{\partial z_{jj}}\right)( c_\alpha Z^\alpha)+\frac{\partial}{\partial z_{ij}}( c_\beta Z^\beta)$
	is a non-mixed polynomial with the degree of each monomial reduced by $1$ and exactly one off-diagonal monomial. Taking $|\beta|-1$ consecutive derivatives in a similar way, we obtain a non-mixed polynomial of the form $g(Z)=\sum_{\alpha'} c_{\alpha'} Z^{\alpha'} + c_{\beta'} Z^{\beta'}$ with $|\beta'|=1$ and $|\alpha'|\in\{2,3\}$. 
	Substituting $S$ into $g$ gives
	 equation~\eqref{eq:binom-S}. Thus neither $g$ nor $f$ can be psd-stable.
\end{proof}

Now we prove Theorem~\ref{th:hom-non-mix}, which shows that
homogeneous non-mixed polynomials of high degree cannot be psd-stable.

\begin{proof}[Proof of Theorem~\ref{th:hom-non-mix}]
	Assume to the contrary that $f$ is a homogeneous non-mixed psd-stable polynomial of degree at least $3$. 
	By Remark~\ref{re:real-coeff}, we can assume without loss of generality that all coefficients of $f$ are real. 
	
	First assume the degree of $f$ is $d=3$. We will show a contradiction to psd-stability by explicitly finding a forbidden root of $f$. 
	
	Let $a:=\sum_{\alpha \in A} c_\alpha$ and 
	$b:=\sum_{\beta \in B} c_\beta$. Note that $a$ and $b$ are real and 
	$a=f(I_n)\neq 0$ by Corollary~\ref{co:psd-reduction-property} c1).
	If $b\neq 0$, w.l.o.g.\ we normalize so that $a=1$.
	To obtain the desired forbidden root we look for a solution of the form $S$ introduced above, that is, real solutions $s,t$ for the equation $f(S)=(s+i)^3+bt^3=0$. By splitting the equation into real and imaginary part, we obtain the system
	\[
	\begin{array}{rrcl}
		\text{(Re)}: & \quad (s^3-3s)+bt^3&=&0, \\
		\text{(Im)}: & \quad 3s^2-1&=&0.
	\end{array}
	\]
	 Consider the positive real solution $s^*=\frac{1}{\sqrt{3}}$ of (Im). Plugging this solution into (Re) gives a real cubic in $t$, which has a real solution $t^*$. 
	
	If instead $b=0$, we tweak matrix $S$ to $S'$ as follows: let $\beta_0 \in B$ such that $c_{\beta_0} \neq 0$, and let $z_{ij}$ be a variable occurring in the monomial $Z^\beta$. Then we let $S'_{ij}=S_{ji}'=(1+\varepsilon)t$ for a small $\varepsilon>0$. The remaining entries of $S'$ are the same as those in $S$. Since $b=\sum_\beta c_\beta =0$, we have that $f(S')= (s+i)^3+ \varepsilon c_{\beta_0}  t^3$, and $\varepsilon c_{\beta_0} >0$, which means we fall into the case above with the coefficient of $t^3$ non-zero. 
	We have thus constructed solutions violating psd-stability for any such degree $3$ polynomial $f$.

	Now let $d>3$ and assume $d$ is the smallest degree such that there is a polynomial $f$ of the specified form which is psd-stable of degree $d$.
	Its partial derivative in any direction $V^{(ij)}$ 
	is psd-stable by Lemma~\ref{le:conic-deriv}. If we choose $(i,j), \ i\neq j$ such that the variable $z_{ij}$ occurs in $f$, $\frac{\partial f}{\partial V^{(ij)}}$ is a polynomial of the same form of degree $d-1$: since 
	$\frac{\partial f}{\partial V^{(ij)}}(Z)=\left(\frac{\partial}{\partial z_{ii}}+\frac{\partial}{\partial z_{jj}}\right)(\sum_\alpha c_\alpha Z^\alpha)+\frac{\partial}{\partial z_{ij}}(\sum_\beta c_\beta Z^\beta)$, 
the coefficients of off-diagonal monomials are positive multiples of those of $f$ and therefore there must be a non-zero one. This is a contradiction, since we assumed that $d$ was the smallest degree which a psd-stable polynomial of this form could have.
\end{proof}

We finally have all the tools needed to prove Theorem~\ref{th:bin-class-dia},
which provides a complete classification of the support of
psd-stable binomials.

\begin{proof}[Proof of Theorem~\ref{th:bin-class-dia}]
	Let $f$ be a binomial. Then $f$ can be written in the form $f(Z)=Z^\gamma \tilde{f}(Z)$, where $\tilde{f}(Z)=c_{\alpha} Z^\alpha+c_{\beta} Z^\beta$ is an irreducible psd-stable binomial and therefore also a non-mixed polynomial. 
	
	If all variables appearing in $f$ are diagonal variables, then $f$ is stable, and by Theorem \ref{th:braenden-js} its support has to satisfy the
Two-Steps Axiom, which leads to $|\alpha-\beta|\leq 2$ in the case of binomials. Thus, now we can assume the occurrence of an off-diagonal monomial, say 
$Z^{\beta}$. By the structure Theorem~\ref{th:struc-thm} and after possibly taking derivatives in direction $V^{(ij)}$ for some $z_{ij}$ appearing in $Z^\beta$, we see $|\beta| \le |\alpha|$.
		
	In the homogeneous case, by Theorem~\ref{th:hom-non-mix}, we have $\deg(\tilde{f})\leq 2$. The only possibility is given by 
	$\tilde{f}(Z)= c_1 z_{ii}z_{jj}+c_2 z_{ij}^2$ with $c_1, c_2 \neq 0$ and $i \neq j$, 
	since otherwise we would get a contradiction to the structure Theorem~\ref{th:struc-thm}.
		Clearly $|\alpha-\beta|\leq 2$ holds. Further we have $\frac{c_1}{c_2}\in\R$ by Remark~\ref{re:real-coeff}.
In the non-homogeneous case, i.e., $|\alpha| \neq |\beta|$,
Lemma~\ref{le:non-hom-bin} implies $\beta=0$ or $|\beta|> |\alpha|$.
$\beta=0$ is not involving an off-diagonal variable. The case 
$|\beta|>|\alpha|$ contradicts the earlier observation that
$|\beta| \le |\alpha|$. Therefore, there is no non-homogeneous 
psd-stable binomial involving off-diagonal variables.
\end{proof}

From Theorem~\ref{th:bin-class-dia}, we observe that psd-stable binomials cannot contain a monomial which is the product of different off-diagonal variables. This also holds for psd-stable homogeneous non-mixed polynomials.
 
\begin{theorem}\label{th:psd-stable-non-mixed}
	Let $f$ be a psd-stable homogeneous non-mixed polynomial of degree $2$. Then $f$ is of the form $f(Z)=\sum_{\alpha\in A}c_\alpha Z^\alpha+\sum_{i<j}c_{ij}z_{ij}^2$.
\end{theorem}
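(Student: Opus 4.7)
The plan is to argue by contradiction, following the strategy used in the proofs of Theorems~\ref{th:hom-non-mix} and~\ref{th:bin-class-dia}. Assume $f = \sum_{\alpha\in A} c_\alpha Z^\alpha + \sum_{\beta\in B} c_\beta Z^\beta$ is psd-stable, homogeneous, non-mixed of degree $2$, and suppose (for contradiction) that some $\beta \in B$ corresponds to a cross off-diagonal monomial $z_{ij}z_{kl}$ with $(i,j) \neq (k,l)$ and $c_\beta \neq 0$. By Remark~\ref{re:real-coeff}, I may take the coefficients of $f$ to be real; by Theorem~\ref{th:struc-thm}, the diagonal variables $z_{ii}, z_{jj}, z_{kk}, z_{ll}$ must all appear in $f$. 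I then distinguish two cases according to whether $\{i,j\} \cap \{k,l\}$ is empty (disjoint indices) or has exactly one element (shared index).

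In both cases, my first step is to restrict $f$ via minorization (Lemma~\ref{le:psd-preserv-general} c)) to the principal minor on $J := \{i,j\} \cup \{k,l\}$ (of size three or four), yielding a psd-stable non-mixed polynomial $\tilde f$ of degree $2$ that still contains the cross-term. Next, I apply a sequence of directional derivatives in positive semidefinite directions---which preserve psd-stability by Lemma~\ref{le:psd-preserv-general} g)---such as the rank-one matrices $V^{(pq)} = (e_p + e_q)(e_p + e_q)^T$ and $B_{pp} = e_p e_p^T$, to strip away all off-diagonal monomials of $\tilde f$ other than the cross-term and to reduce the diagonal part. The goal is to produce a psd-stable binomial containing $z_{ij}z_{kl}$ together with a single diagonal companion; this would contradict Theorem~\ref{th:bin-class-dia}, whose classification of psd-stable binomials shows that the only allowed off-diagonal contribution is of the form $z_{ii}z_{jj} + c\, z_{ij}^2$, so cross-terms $z_{ij}z_{kl}$ with $(i,j)\neq (k,l)$ cannot occur.

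Alternatively, mimicking the method of the proof of Theorem~\ref{th:hom-non-mix}, I can attempt to exhibit an explicit forbidden root. Substituting $Z = S$ with $S_{aa} = s + i$ (real $s$) and $S_{ab} = t$ (real $t$) for all $a \neq b$ gives $\Im(S) = I \succ 0$, and $f(S) = K(s+i)^2 + Q\, t^2$ where $K = f(I) \neq 0$ by Corollary~\ref{co:psd-reduction-property} c1) and $Q$ is the sum of all off-diagonal coefficients of $f$. Splitting $f(S) = 0$ into real and imaginary parts forces $s = 0$ and $Q\, t^2 = K$, which has a real solution $t$ whenever $KQ > 0$, immediately violating psd-stability. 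The main obstacle is the case $KQ \leq 0$: the naive substitution $S$ no longer provides a root, and one must perturb only the off-diagonal entries of $S$ at positions $(i,j)$ and $(k,l)$---e.g., replacing these entries by $(1+\varepsilon) t$ while keeping the others at $t$---and exploit the nonzero coefficient of $z_{ij}z_{kl}$ to bend $f(S)$ into a regime where a real root exists, all while preserving $\Im(S) \succ 0$. Carrying out this perturbation rigorously, and verifying that it can always be done regardless of the signs of the remaining coefficients of $f$, is the most delicate step of the argument.
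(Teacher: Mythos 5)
Your first route—minorize to $J=\{i,j\}\cup\{k,l\}$, then strip away unwanted monomials by directional derivatives until a psd-stable binomial containing $z_{ij}z_{kl}$ remains—cannot succeed. Since $\tilde f$ is homogeneous of degree~$2$, a single directional derivative $\partial_V \tilde f$ has degree~$1$, so the quadratic cross-term $z_{ij}z_{kl}$ necessarily disappears from $\partial_V\tilde f$. There is no way to ``reduce the diagonal part'' while preserving the off-diagonal cross-term, because every nonzero $V\in\sym_n^+$ drops the total degree uniformly. Hence you never arrive at a binomial of the form Theorem~\ref{th:bin-class-dia}~b) forbids, and the intended contradiction is out of reach.

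Your second route is closer to the paper's, but there is both a structural mismatch and an acknowledged gap. The paper does not use the uniform substitution $S$ with all off-diagonal entries equal to $t$; instead it zeroes out \emph{every} off-diagonal entry except at positions $(i,j)$ and $(k,l)$, and gives those two entries \emph{independent} real values $t_1,t_2$. After normalizing, $f(S')=0$ reduces to the clean quadratic $s^2+c_1t_1^2+c\,t_1t_2+c_2t_2^2=0$ with $c\neq 0$ the coefficient of $z_{ij}z_{kl}$. The key step you are missing is then to exploit hyperbolicity of $f$ (Remark~\ref{re:real-coeff}): the quadratic form $g(s,t_1,t_2)$ inherits hyperbolicity and must have signature $(2,1)$, which forces the $2\times 2$ block $\bigl(\begin{smallmatrix}c_1 & c/2\\ c/2 & c_2\end{smallmatrix}\bigr)$ to have signature $(1,1)$. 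That signature condition is exactly what makes the subsequent case analysis on the signs of $c_1,c_2$ close in all cases, producing real $t_1,t_2$ with $s^2+\gamma=0$ for some $\gamma>0$ and hence a root $S'$ with $\Im(S')\succ 0$. Your version, with a single parameter $t$ and a vaguely described $\varepsilon$-perturbation at two entries, neither isolates the relevant coefficients $c_1,c_2,c$ nor invokes the signature constraint, and you yourself flag the case $KQ\le 0$ as unresolved. That is precisely the case where the extra parameter and the signature argument are needed; without them the proof is incomplete.
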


\begin{proof}

	Let $f(Z)$ be a psd-stable homogeneous non-mixed polynomial of degree $2$ and assume to the contrary that there is a monomial $z_{ij}z_{kl}$ in $f$ involving distinct variables, that is, $\{i,j\} \neq \{k,l\}$. Note that the index sets $\{i,j\}$ and $\{k,l\}$ can intersect.
	The order of the variable matrix must therefore be at least~3.
	
	Consider $S'$ as a modified version of $S$ from~\eqref{eq:S-form} with $S'_{ij}:=S'_{ji}:=t_1, \ S'_{kl}:=S'_{lk} := t_2$ for
	complex $t_1$ and $t_2$ and set all other off-diagonal entries of $S'$ to $0$ while the diagonal of $S$ is set to some complex value $s$. 
	Thus, up to a factor, $f(S')=0$ is of the form
	\begin{equation}\label{eq:real-sol-S}
	s^2+c_1 t_1^2+c t_1 t_2 + c_2 t_2^2=0
	\end{equation}
	with some constants $c_1,c_2, c$ and $c \neq 0$. 
	Since $f$ is hyperbolic due to its homogeneity and psd-stability, we may assume $c_1,c_2,c$ to be real.
	
	Since $f$ is hyperbolic, the quadratic polynomial $g$ in $s,t_1,t_2$
	on the left hand side of~\eqref{eq:real-sol-S} is hyperbolic 
	as well. Hyperbolic quadratic polynomials have signature $(n-1,1)$
	or $(1,n-1)$ (\cite{garding-59}, see, e.g., also \cite{pemantle-2012}). 
	Since the term $s^2$ in $g$ comes from
	a substitution into the terms $z_{11}^2, \ldots, z_{nn}^2$,
	the representation matrix of $g$ must have signature $(2,1)$. 
	Hence, the lower right $2 \times 2$-matrix of the representation matrix
	\[
	\begin{pmatrix}
	1 & 0 & 0 \\
	0 & c_1 & \frac{c}{2} \\
	0 & \frac{c}{2} & c_2
	\end{pmatrix}
	\]
	has signature $(1,1)$.
	If at least one of the $c_i$ is positive, then we
	can choose real values for $t_1$ and $t_2$ such that
	$s^2 + \gamma = 0$ with some $\gamma > 0$,
	which gives among the two solutions for $s$ one with
	positive imaginary part. 
	If one of the $c_i$, say, $c_2$, is zero and
	$c_1 \le 0$, then setting $t_1 = 1$ and $t_2 = \frac{1-c_1}{c}$
	gives the solution $s=i$ with positive imaginary part.
	It remains to consider the case $c_1 < 0$, $c_2 < 0$, in which
	the signature condition implies $(c/2)^2 > c_1 c_2$. By choosing $t_1, t_2$ to 
	satisfy $t_1^2 = - \frac{1}{c_1}$, $t_2^2 = - \frac{1}{c_2}$, we obtain
	\[
	t_1^2 t_2^2 \left( \frac{c}{2} \right)^2 \ > \
	t_1^2 c_1 t_2^2 c_2 
	\ = \ \frac{1}{4} (t_1^2 c_1 + t_2^2 c_2)^2,
	\]
	which can formally be viewed as the equality case
	of the arithmetic-geometric inequality. We can pick the signs
	of $t_1, t_2$ such that $c t_1 t_2 > 0$. And the previous inequality implies
	\[
	| c t_1 t_2 | > | c_1 t_1^2 + c_2 t_2^2|
	\]
	(and the expression in the argument of the absolute value on the right hand side is negative).
	Hence, we obtain $s^2 + \gamma = 0$ for some positive $\gamma$,
	which gives among the two solutions for
	$s$ one with positive imaginary part.
	
	Altogether, we have constructed a zero $S'$ of $f$ with $\Im(S') \succ 0$,
	which contradicts the psd-stability of $f$.
\end{proof}

\subsection{Polynomials of determinants}\label{sse:pol-of-det}

We show that the following class of polynomials of determinants satisfies 
a generalized jump system criterion with regard to psd-stability.
	Suppose that the symmetric matrix of variables $Z$ is a diagonal block matrix with blocks $Z_1, \dots, Z_k$. 
	A \emph{polynomial of determinants} is a polynomial in $Z$ of the form $f(Z_1, \ldots, Z_k)=\sum_{\alpha} c_\alpha\det(Z)^\alpha$, where we define $\det(Z)^\alpha=\det(Z_1)^{\alpha_1}\cdots \det(Z_k)^{\alpha_k}$.

We say a polynomial of determinants $f(Z_1, \ldots, Z_k)=\sum_{\alpha} \det(Z)^\alpha$ is written in \emph{standard form} if the largest possible determinantal monomial is factored out, i.e., 
$f(Z_1, \ldots, Z_k)= $ $\det(Z)^\gamma\sum_\beta c_{\beta} \det(Z)^\beta$ $=\det(Z)^\gamma\tilde{f}(Z)$, and all $c_\beta \neq 0$.  
We investigate the following notion of support for polynomials of determinants. 

\begin{definition}\label{def:determinantal-supp}
	Let $f(Z_1, \ldots, Z_k)=\sum_{\alpha}c_\alpha\det(Z)^\alpha$ be a polynomial of determinants. Then the \emph{determinantal support} is defined as $\supp_{\det} (f)=\{\alpha\in\Z_{\geq 0}^k:c_\alpha\neq 0 \}$.
\end{definition}
Note that the determinantal support specializes to the usual support when $Z$ is a diagonal matrix, that is, all $Z_i$ are $1\times 1$ matrices of a single variable.
As a corollary of Theorem~\ref{th:braenden-js}, we obtain the following analogue for the determinantal support of psd-stable polynomials of determinants.
\begin{corollary}\label{th:det-jump-system}
	Let $f(Z_1, \ldots, Z_k)=\sum_{\alpha}c_\alpha\det(Z)^\alpha$  be psd-stable. Then the determinantal support of $f$ forms a jump system.
\end{corollary}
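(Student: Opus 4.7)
The plan is to reduce Corollary~\ref{th:det-jump-system} to Br\"and\'en's Theorem~\ref{th:braenden-js} by constructing from the psd-stable polynomial $f$ an ordinary stable polynomial $g_\varepsilon \in \C[w_1,\ldots,w_k]$ whose classical support equals $\supp_{\det}(f)$.

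Let $n_j$ denote the size of the block $Z_j$. For a fixed $\varepsilon > 0$, I would substitute $Z_j := \diag(w_j,\, i\varepsilon I_{n_j-1})$ for free complex scalars $w_1,\ldots,w_k$. Then $\det(Z_j) = w_j (i\varepsilon)^{n_j-1}$, and hence
\[
g_\varepsilon(w_1,\ldots,w_k) \;:=\; f\bigl(\diag(w_1,i\varepsilon I_{n_1-1}),\ldots,\diag(w_k,i\varepsilon I_{n_k-1})\bigr) \;=\; \sum_{\alpha} c_\alpha \prod_{j=1}^k (i\varepsilon)^{(n_j-1)\alpha_j}\, w_j^{\alpha_j}.
\]
Each coefficient of $g_\varepsilon$ is a nonzero scalar multiple of the corresponding $c_\alpha$, so $\supp(g_\varepsilon) = \supp_{\det}(f)$.

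The stability of $g_\varepsilon$ then follows directly from the psd-stability of $f$: for any $w \in \C^k$ with $\Im(w_j) > 0$ for all $j$, each block $\Im(Z_j)$ is a real diagonal matrix with strictly positive diagonal entries, so the block-diagonal matrix $\Im(Z)$ is positive definite. Thus $g_\varepsilon(w) = f(Z) \neq 0$, proving stability. Applying Theorem~\ref{th:braenden-js} to $g_\varepsilon$ yields that $\supp_{\det}(f) = \supp(g_\varepsilon)$ is a jump system.

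The main subtlety lies in the choice of substitution. The naive variant $Z_j := w_j I_{n_j}$ also preserves psd-stability, but it produces a polynomial whose support is the anisotropically rescaled set $\{(n_1 \alpha_1,\ldots,n_k \alpha_k) : \alpha \in \supp_{\det}(f)\}$; since such rescalings do not in general preserve the jump-system property (the Two-Steps Axiom refers to unit steps), this route is blocked. Inserting $i\varepsilon$ in all but one diagonal slot per block circumvents this obstruction: it simultaneously ensures $\Im(Z) \succ 0$ and leaves each monomial $\det(Z)^\alpha$ a nonzero scalar multiple of $w^\alpha$, thereby preserving the support exactly.
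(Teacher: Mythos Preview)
Your proof is correct and supplies exactly the details the paper omits: the paper states this result as an immediate corollary of Theorem~\ref{th:braenden-js} without spelling out the reduction, and your substitution $Z_j := \diag(w_j,\, i\varepsilon I_{n_j-1})$ (effectively repeated use of the Specialization preserver in Lemma~\ref{le:psd-preserv-general}\,d)) is the natural way to carry it out. Your remark on why the naive substitution $Z_j := w_j I_{n_j}$ fails is a helpful clarification that the paper does not make explicit.
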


The next theorem shows that psd-stable polynomials of determinants have a very special structure.

\begin{theorem}\label{th:mat-size-poly-of-det}
	Let $f(Z_1, \ldots, Z_k) = \det(Z)^\gamma \sum_{\beta \in B} c_\beta \det(Z)^\beta = \det(Z)^\gamma \tilde{f}(Z)$ be a psd-stable polynomial of determinants in standard form. Then any block $Z_i$ appearing in $\tilde{f}$ (that is, any $Z_i$ such that there is $\beta \in B$ with $\beta_i >0$) has size $d_i \leq 2$.  
	
	Further, for any matrix $Z_i$ which has size exactly $2$, let $C_i=\max_{\beta \in B} \beta_i$. Then if $\beta \in B$, then also $\beta+ c \e_i \in B$ for all  $- \beta_i \leq c \leq C_i -\beta_i$.
\end{theorem}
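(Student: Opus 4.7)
My plan is to view $\tilde f$ through the auxiliary variables $Y_j := \det(Z_j)$. I will introduce $\tilde F(Y_1, \ldots, Y_k) := \sum_{\beta \in B} c_\beta Y^\beta$, so that $\tilde f(Z) = \tilde F(\det Z_1, \ldots, \det Z_k)$. Since $\det(Z)^\gamma$ is psd-stable (Lemma~\ref{le:determinant}), $\tilde f$ is also psd-stable, which translates into $\tilde F(Y) \neq 0$ for all $Y \in \prod_j S_{d_j}$, where $S_d$ denotes the image of $\det$ on $\{Z \in \sym^\C_d : \Im Z \succ 0\}$. The first computational task is to identify $S_d$; using Lemma~\ref{le:alpha-in-h} together with diagonal matrices, one obtains $S_1 = \mathcal{H}$, $S_2 = \C \setminus \R_{\geq 0}$, and $S_d = \C \setminus \{0\}$ for $d \geq 3$.

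For the size bound $d_i \leq 2$, I will argue by contradiction. Assume $d_i \geq 3$ and some $\beta \in B$ has $\beta_i > 0$. For generic $y^* \in \prod_{j \neq i} S_{d_j}$, the univariate polynomial $\tilde F(y^*, \cdot)$ in $Y_i$ has no zero in $\C \setminus \{0\}$, hence is a monomial in $Y_i$. Standard form yields $\min_{\beta \in B} \beta_i = 0$, which forces this monomial to have degree $0$ in $Y_i$, contradicting the existence of $\beta \in B$ with $\beta_i > 0$.

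For the interval structure with $d_i = 2$, I will expand $\tilde F(Y) = \sum_{a=0}^{C_i} P_a(Y^*) Y_i^a$ with $Y^* = (Y_j)_{j \neq i}$ and $P_a(Y^*) := \sum_{\beta^*:(a,\beta^*) \in B} c_{(a,\beta^*)} (Y^*)^{\beta^*}$. For generic $y^* \in \prod_{j \neq i} S_{d_j}$, the polynomial $p(w) := \tilde F(y^*, w)$ has no root in $\C \setminus \R_{\geq 0}$, so all $C_i$ of its roots $r_\ell(y^*)$ lie in $\R_{\geq 0}$. Vieta's formulas then yield
\[
\frac{P_a(y^*)}{P_{C_i}(y^*)} \;=\; (-1)^{C_i - a}\, e_{C_i - a}\bigl(r_1(y^*), \ldots, r_{C_i}(y^*)\bigr) \;\in\; \R,
\]
where $e_k$ denotes the $k$-th elementary symmetric polynomial, on the nonempty open subset of $\C^{k-1}$ where $P_{C_i}(y^*) \neq 0$. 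Since a rational function that is real-valued on a nonempty open subset of $\C^{k-1}$ must be constant, I obtain $P_a = c_a P_{C_i}$ as polynomials. Consequently $B_a := \{\beta^* : (a, \beta^*) \in B\}$ is either $B_{C_i}$ or $\emptyset$.

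To close the argument I need $c_a \neq 0$ for every $a \in \{0, 1, \ldots, C_i\}$. Standard form ensures $P_0 \not\equiv 0$, so for generic $y^*$ the polynomial $p$ satisfies $p(0) = P_0(y^*) \neq 0$, which forces every $r_\ell(y^*) > 0$. Then $e_{C_i - a}(r(y^*)) > 0$ and $P_a(y^*) \neq 0$ for every $a \in \{0, \ldots, C_i\}$, hence $c_a \neq 0$. Thus $B_a = B_{C_i}$ for all $a \in \{0, 1, \ldots, C_i\}$, which is exactly the stated interval condition. I expect the main technical hurdle to be the identity-principle step promoting real-valuedness on an open set to global constancy; it relies on the openness and connectedness of the product $\prod_j S_{d_j}$ furnished by the explicit description of the sets $S_d$.
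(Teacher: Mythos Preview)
Your proof is correct and takes a genuinely different route from the paper's. The paper argues via the diagonalization preserver (Lemma~\ref{le:psd-preserv-general}~a)): it observes that $\tilde{f}_\diag$ is an ordinary stable polynomial whose support vectors all have the blocked form $(\underbrace{\beta_1,\ldots,\beta_1}_{d_1},\ldots,\underbrace{\beta_k,\ldots,\beta_k}_{d_k})$, and then derives both conclusions purely combinatorially from Br\"and\'{e}n's jump system theorem (Theorem~\ref{th:braenden-js}) applied to $\supp(\tilde{f}_\diag)$. You instead work in the determinantal coordinates $Y_j=\det Z_j$, identify the image sets $S_d$ of the Siegel upper half-space under $\det$ via Lemma~\ref{le:alpha-in-h} together with diagonal matrices, and argue analytically: for $d_i\geq 3$ the univariate restriction is forced to be a monomial in $Y_i$, which clashes with standard form; for $d_i=2$ all roots lie in $\R_{\geq 0}$, Vieta makes each ratio $P_a/P_{C_i}$ real-valued on a nonempty Euclidean-open subset of $\C^{k-1}$, and the open mapping/identity principle forces $P_a=c_a P_{C_i}$ with $c_a\neq 0$. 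The paper's approach keeps the result within the combinatorial framework of the paper and uses Theorem~\ref{th:braenden-js} as a black box; your approach is independent of Br\"and\'{e}n's theorem and in fact proves more than the stated support condition---it yields an actual factorization $\tilde F(Y)=P_{C_i}(Y^*)\cdot g_i(Y_i)$ for each $2\times 2$ block, with $g_i$ a real polynomial having only positive roots.
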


\begin{proof}
	Observe that, by construction, a variable in the matrix $Z_i$ does not appear in any other matrix $Z_j$. This ensures that all vectors in the support of the polynomial $\tilde{f}_\diag(Z)$, which involves only the diagonal variables, are of the form 
	\begin{equation}\label{eq:supp_diag}
		(\underbrace{\beta_1, \dots, \beta_1}_{d_1 \text{ times}},  \dots, \underbrace{\beta_k, \dots, \beta_k}_{d_k \text{ times}}),
	\end{equation}
	where $\beta=(\beta_1,  \dots, \beta_k) \in B$ is an exponent vector of $\det(Z)$ in $\tilde{f}$ and $d_i$ is the size of the matrix $Z_i$ for each $i$.
	
	Further, $\tilde{f}_\diag$ is stable and its support is therefore a jump system. Suppose now that some matrix $Z_i$, say $Z_1$, has size $d_1\geq 3$.
 Since $f$ is in standard form, there are $\beta \in B$ such that $\beta_1 >0$ and $\beta' \in B$ such that $\beta'_1=0$. Then there are corresponding vectors $\alpha = (\underbrace{\beta_1, \dots, \beta_1}_{d_1 \text{ times}}, \beta_2, \dots)$ and $\alpha' = (\underbrace{0, \dots, 0}_{d_1 \text{ times}}, \beta_2', \dots)$ in the support of $\tilde{f}_\diag$, which is a jump system. Thus $\e_1 $ is a valid step from $\alpha'$ to $\alpha$, but since $\alpha'+\e_1=(1, 0, \dots, 0, \dots)$ is not of the form (\ref{eq:supp_diag}) it cannot belong to the support of $\tilde{f}_\diag$. Now by definition of a jump system, there must be a step from $\alpha' + \e_1$ to $\alpha$ which is in the support. However, whichever step we take will lead us again to a vector where the first $d_1$ entries are not all equal, since $d_1\geq 3$, and thus none of these vectors can be in the support of $\tilde{f}_\diag$, contradicting the fact that it is a jump system. Thus all blocks $Z_i$ in $\tilde{f}$ must have size $d_i \leq 2$.
	
	Now suppose that $d_i=2$ for some block $Z_i$, without loss of generality let it be $Z_1$. Just as before, we know there are  $\beta \in B$ such that $\beta_1 >0$ and $\beta' \in B$ such that $\beta'_1=0$; further,  If $C_1=\max_{\beta \in B} \beta_1$, then there is also a vector $\beta'' \in B$ such that $\beta''_1=C_1$. This implies that in the support of $\tilde{f}_\diag$ there are vectors $\alpha = (\beta_1, \beta_1,  \dots)$, $\alpha' = (0, 0,  \dots)$ and $\alpha'' = (C_1, C_1,  \dots)$. Thus $\alpha - \e_1= (\beta_1 - 1, \beta_1, \dots)$ is a valid step from $\alpha$ to $\alpha'$. Just as before, $\alpha - \e_1$ does not belong to the support of $\tilde{f}_\diag$ because it is not of the form of (\ref{eq:supp_diag}). Thus there must be a further step from $\alpha - \e_1$ towards $\alpha'$ which is in the support. The only such step is in the second coordinate, so that (\ref{eq:supp_diag}) is satisfied, and thus $\alpha -\e_1 -\e_2 \in \supp(\tilde{f}_\diag)$. This argument can be repeated until we obtain the statement of the theorem. 
\end{proof}

\subsection{Considerations on the support of general psd-stable polynomials}\label{sse:comb-gen-psd}

By Theorem~\ref{th:braenden-js}, the support of a stable polynomial 
defines a jump system. Hence, there cannot be large gaps in the support, that is, if two vectors are in the support and are far apart, there is some other vector of the support between them.
The families studied in 
Subsections~\ref{se:binomials-non-mixed} and~\ref{sse:pol-of-det} 
suggest that a similar phenomenon happens for psd-stability: when there are too-large gaps in the support, the polynomial cannot be psd-stable. 

In order to quantify what a \emph{large gap} should be, we make two observations. First, since restricting a psd-stable polynomial in the symmetric matrix variables $Z$ to its diagonal yields a stable polynomial, between two monomials involving only diagonal variables the Two-Steps Axiom holds. A weaker statement is that between any two such monomials there is a sequence of linear and double steps which does not leave the support of the polynomial, where we define a \emph{linear step} from a monomial to be multiplying the monomial by $z_{ij}^{\pm 1}$, a \emph{double step} multiplying by $z_{ij}^{\pm 1}z_{kl}^{\pm 1}$.

Recall from Lemma~\ref{le:determinant} that a prominent example of psd-stable polynomials is the symmetric determinant $\det(Z)$. 
In the symmetric matrix variables $(z_{ij})_{i \le j}$, 
its support has a special structure: it contains all monomials that can be obtained from 
$z_{11} \cdots z_{nn}$ by transpositions of indices, that is, by successively multiplying the monomial by $z_{ij}z_{kl}z_{ik}^{-1}z_{jl}^{-1}$ for some indices $i, j, k, l \in [n]$. We call such a move on monomials a \emph{transposition step}. 

\begin{lemma}\label{le:det-transp}
	Any two monomials in the support of the symmetric determinant $\det(Z)$ are linked by a sequence of transposition steps decreasing the distance between the monomials which never leave the support.
\end{lemma}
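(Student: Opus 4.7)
The plan is to interpret each monomial in $\supp(\det(Z))$ combinatorially as a $2$-regular looped multigraph on vertex set $[n]$. By the Leibniz expansion $\det(Z)=\sum_{\sigma\in S_n}\mathrm{sgn}(\sigma)\prod_{i=1}^n z_{i,\sigma(i)}$, the monomial corresponding to $\sigma$ is the multiset of unordered edges $\{\{i,\sigma(i)\}:i\in[n]\}$: a fixed point gives a loop (contributing $2$ to the incident vertex degree), a $2$-cycle gives a double edge, and longer cycles give simple cycles. Under this correspondence, a transposition step multiplying by $z_{ij}z_{kl}z_{ik}^{-1}z_{jl}^{-1}$ is precisely a $2$-switch on the multigraph: remove $\{i,k\},\{j,l\}$ and insert $\{i,j\},\{k,l\}$. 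Since $2$-switches preserve the degree sequence, they map $\supp(\det(Z))$ into itself.

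I would induct on the distance $d(\alpha,\beta):=\sum_{i\le j}|\alpha_{ij}-\beta_{ij}|$ between the exponent matrices of the two monomials. The inductive step is the following claim: whenever $\alpha\neq\beta$ in $\supp(\det(Z))$, some transposition step on $\alpha$ produces $\alpha'\in\supp(\det(Z))$ with $d(\alpha',\beta)<d(\alpha,\beta)$. To find it, form the signed excess multigraph consisting of $P:=\{e:\alpha_e>\beta_e\}$ (multiplicity $\alpha_e-\beta_e$) and $N:=\{e:\alpha_e<\beta_e\}$ (multiplicity $\beta_e-\alpha_e$). Since $\alpha$ and $\beta$ both have constant vertex degree~$2$, the $P$- and $N$-degrees coincide at every vertex (with loops counted doubly); hence $P\cup N$ decomposes into closed trails that alternate between $P$- and $N$-edges.

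Any such alternating trail has even length at least $4$, and therefore contains three consecutive edges of pattern $P\,N\,P$, say $e=\{u,v\}\in P,\ f=\{v,w\}\in N,\ e'=\{w,x\}\in P$. I then perform the transposition step that removes $e$ and $e'$ and inserts $\{u,x\}$ together with $\{v,w\}=f$; this is a valid $2$-switch and thus stays in $\supp(\det(Z))$. The effect on $d$ is a decrease of $1$ from removing $e$, a decrease of $1$ from removing $e'$, a decrease of $1$ from inserting the $N$-edge $f$, and a change of at most $1$ in absolute value from inserting $\{u,x\}$. So $d$ strictly decreases by at least $2$ at each step, and iterating the construction yields the desired chain.

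The main technical obstacle is a careful case analysis when loops or repeated vertices occur in the alternating trail, for instance when $e$ itself is a loop (so $u=v$) or when $u=x$, causing $\{u,x\}$ to become a new loop. In such cases one must weight loops with multiplicity $2$ in the signed-degree bookkeeping and verify that the resulting $\alpha'$ is still a valid $2$-regular exponent matrix (in particular, no double loop at any single vertex). A short case-by-case argument, exploiting that a $P$-loop at $v$ forces two $N$-half-edges at $v$ (and vice versa), reduces all these situations to the generic $P\,N\,P$ construction described above.
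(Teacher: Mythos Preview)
Your argument is correct and takes a somewhat different route from the paper's. Both proofs rest on the same first step—identifying the monomials of $\det(Z)$ with $2$-regular looped multigraphs on $[n]$ and reading transposition steps as $2$-switches. (You use implicitly that every $2$-regular multigraph actually occurs in the support, i.e.\ that there is no cancellation in the Leibniz expansion; the paper spells this out via the observation that permutations with the same symmetric monomial differ only by inverting cycles and hence have the same sign.) From there the paper argues directly: pick $z_{ij}$ dividing $Z^\alpha$ but not $Z^\beta$, pick $z_{ik},z_{jl}$ dividing $Z^\beta$, and apply the switch $z_{ij}^{-1}z_{kl}^{-1}z_{ik}z_{jl}$. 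You instead build the signed excess graph, extract an alternating $P$/$N$ closed trail, and switch on a $PNP$ segment to force a distance drop of at least $2$.

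Your version is in fact the more robust one. The paper's explicit construction can fail to be a valid step: for $n=3$ with $Z^\alpha=z_{12}z_{13}z_{23}$ and $Z^\beta=z_{11}z_{22}z_{33}$, every choice of $z_{ij}$ forces $k=i$ and $l=j$, hence $z_{kl}=z_{ij}$, and the proposed step would have to remove $z_{ij}$ twice from a monomial where it occurs only once. Your alternating-trail construction handles this example cleanly (one finds the $PNP$ segment $\{1,2\},\{2,2\},\{2,3\}$ and switches to $z_{13}^2z_{22}$). The case analysis you defer for loops and repeated vertices is genuinely routine: a $P$-loop at $v$ forces two $N$-half-edges at $v$, so the trail through it has the shape $N,P\text{-loop},N$ and the associated switch replaces the loop by two ordinary edges at $v$, with the distance bookkeeping unchanged.
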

\begin{proof}
	Monomials in $\det(Z)$ are precisely those products of symmetric
	variables $z_{ij}$ (where $i \le j$)
	such that each index $k \in [n]$ appears exactly twice. Indeed,
	when considering the determinant as a polynomial in $n^2$ (i.e.,
	non-symmetric) variables, each monomial corresponds to a permutation in the symmetric group
	$S_n$, and thus each element of $[n]$ must appear precisely once in the
	rows and once in the columns index in the monomial. When considering the
	determinant as a polynomial in the symmetric variables, certain distinct
	permutations define the same monomial. Observe that the variable
	$z_{ij}$ appears in the monomial defined by a permutation $\pi$ if
	either $i= \pi(j)$ or $j=\pi(i)$. Thus, both a cycle $\sigma=(i_1 i_2
	\dots i_k) \in S_k$ and its inverse $(i_1 i_k i_{k-1} \dots i_2 ) $
	yield the monomial $\Pi_{j} z_{i_j i_{j+1}}$, and in general, two
	permutations correspond to the same monomial if and only if their cycle
	decompositions are made of pairwise the same or inverse cycles. Since two
	such permutations have the same sign, there is no cancelation of
	monomials in the symmetric determinant $\det(Z)$.
	
	Thus, applying any transposition step to any monomial of $\det(Z)$ will
	yield another monomial of $\det(Z)$: exchanging $z_{ij}z_{kl}$ with
	$z_{ik}z_{jl}$ or $z_{il}z_{kj}$ preserves the property that each index
	appears exactly twice.	We now only need to show that, given any two monomials $Z^\alpha$ and $Z^\beta$ of $\det(Z)$, 
	there exists a transposition step
	from $\alpha$ to $\beta$. Choose a variable
	$z_{ij}$ such that $z_{ij} \mid Z^\alpha$ but 
	$z_{ij} \nmid Z^\beta$. There must
	be an index $k \neq j$ such that $z_{ik} \mid Z^\beta$ and an index $l \neq
	i$ such that $z_{jl} \mid Z^\beta$. Then multiplying $Z^\alpha$ by
	$z_{ij}^{-1}z_{kl}^{-1}z_{ik}z_{jl}$ is a transposition step, since it
	decreases the distance to $\beta$ in the norm $| \cdot |$.
\end{proof} 

We conjecture that a property inspired by the structure of the determinant and that of stable polynomials holds for all psd-stable polynomials. 

\begin{conjecture}\label{conj:support}
	For any monomial $Z^\beta$ appearing in a psd-stable polynomial, there is a diagonal monomial $Z^\alpha$ appearing in $f$ which can be reached by a sequence of linear, double and transposition steps which decrease the distance from $\beta$ to $\alpha$ and which never leave the support of $f$.
\end{conjecture}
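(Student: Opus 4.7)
Since this is stated as a conjecture rather than a theorem, the following is a tentative plan. The natural approach is to proceed by strong induction on the off-diagonal degree of $\beta$, with a secondary induction on the total degree $|\beta|$. The base case, where $\beta$ is already diagonal, is trivial: take $\alpha=\beta$. Moreover, Corollary~\ref{co:psd-reduction-property} guarantees that $\supp(f)$ contains at least one diagonal monomial available as a target $\alpha$.

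For the inductive step, fix an off-diagonal variable $z_{ij}$ (with $i<j$) dividing $Z^\beta$. By Theorem~\ref{th:struc-thm}, both $z_{ii}$ and $z_{jj}$ appear in $\supp(f)$. I would then seek some $\beta'\in\supp(f)$ reachable from $\beta$ by a single allowed move (linear, double, or transposition) such that either $\beta'$ has strictly smaller off-diagonal degree, or the same off-diagonal degree but strictly smaller distance to a fixed diagonal target in $\supp(f)$. The inductive hypothesis would then construct the remainder of the path from $\beta'$.

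To locate such a $\beta'$ I would combine three preservation tools developed earlier. First, by Lemma~\ref{le:conic-deriv} the directional derivative $\partial_{V^{(ij)}}f$ is psd-stable; since $\partial_{V^{(ij)}}$ acts on diagonal variables as $\partial/\partial z_{ii}+\partial/\partial z_{jj}$ and (up to constants) as $\partial/\partial z_{ij}$ on $z_{ij}$, its support collects precisely the potential linear and transposition neighbors of $\beta$ in $f$. An induction applied to this smaller polynomial yields a path within its support which one then tries to lift. Second, Theorem~\ref{th:initial-form-W-pd} ensures that $\init_W(f)$ is psd-stable for every positive definite $W$; choosing $W$ concentrated near $\beta$ localizes the question to a face of the Newton polytope of $f$ where the classifications of Theorems~\ref{th:bin-class-dia}, \ref{th:hom-non-mix} and \ref{th:psd-stable-non-mixed} become directly applicable. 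Third, the minorization property from Lemma~\ref{le:psd-preserv-general} allows one to extract a $2\times 2$ principal block containing $z_{ij}$ in which the fundamental transposition $z_{ij}^2\leftrightarrow z_{ii}z_{jj}$ is witnessed by the psd-stable sub-determinant, mirroring Lemma~\ref{le:det-transp}.

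The principal obstacle is the \emph{lifting problem}: the inductive hypothesis applied to $\partial_{V^{(ij)}}f$ gives a path in its support, but lifting this back to $\supp(f)$ requires showing that each intermediate monomial has an unambiguous preimage in $\supp(f)$, not erased by cancellation among the preimage candidates under the derivative. Controlling such cancellations in full generality would seem to demand a refinement of the phase/sign constraints of Theorem~\ref{th:same-phase} and Corollary~\ref{co:psd-reduction-property}(c2) beyond the purely diagonal setting. This is likely the reason the statement is still open; special cases such as polynomials of determinants (Theorem~\ref{th:mat-size-poly-of-det}) and binomials (Theorem~\ref{th:bin-class-dia}) can be verified directly, giving evidence for the conjecture but not yet forcing the general lifting step.
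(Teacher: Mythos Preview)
The paper offers no proof of this statement: it is explicitly a conjecture, and the paper only verifies it for the special families treated in Section~\ref{se:combinatorics} (binomials via Theorem~\ref{th:bin-class-dia}, homogeneous non-mixed polynomials via Theorems~\ref{th:hom-non-mix} and~\ref{th:psd-stable-non-mixed}, polynomials of determinants via Lemma~\ref{le:det-transp}, and lpm polynomials). Your proposal is therefore not to be compared against an existing proof but assessed as a strategy toward an open problem.

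Viewed that way, your plan is sensible and your diagnosis of the difficulty is accurate. The induction on off-diagonal degree together with the preservers $\partial_{V^{(ij)}}$, $\init_W$ for $W\succ 0$, and minorization are exactly the tools the paper has available, and the special-case verifications in the paper proceed in this spirit. The obstacle you isolate---that a path in $\supp(\partial_{V^{(ij)}}f)$ need not lift to $\supp(f)$ because a monomial in the derivative may arise from several preimages whose coefficients cancel---is genuine and is precisely why the general case remains open. One minor correction: $\partial_{V^{(ij)}}$ does not act on monomials as a single ``transposition neighbor'' operator; it shifts each monomial by $-B_{ii}$, $-B_{jj}$, or $-2B_{ij}$ separately, so the derivative support records linear-step neighbors rather than transposition-step neighbors, and extracting a transposition step from it still requires the cancellation control you flag. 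Your remark that the phase/sign constraints of Corollary~\ref{co:psd-reduction-property}(c2) would need to be extended beyond diagonal monomials to rule out such cancellations is a reasonable guess at what is missing.
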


\begin{example}
	The polynomial
	\begin{align*} f(Z)&=(z_{11}+z_{22}-2z_{12})(z_{11}z_{33}-z_{13}^2)\\
	&= z_{11}^2z_{33} + z_{11}z_{22}z_{33}-2z_{11}z_{33}z_{12}-z_{11}z_{13}^2 - z_{13}^2z_{22}+2z_{12}z_{13}^2
	\end{align*}
	is psd-stable because it is the product of two psd-stable polynomials: the first one 
is the derivative in direction $V^{(12)}$ of the $2\times 2$ determinant, 
the other one is a $2\times 2$ determinant sharing one variable with the first.  
	
	This polynomial satisfies Conjecture \ref{conj:support}: for example, if we choose the monomial $z_{12}z_{13}^2$, with a double step we reach $z_{11}z_{13}^2$, which is also in the support of $f$, and with a transposition step we reach $z_{11}^2z_{33}$, a diagonal monomial in the support. Notice that the double step produces a monomial whose exponent vector is closer to the exponent of the final diagonal monomial (with respect to $| \cdot |$). Such a sequence of valid steps can be found for all monomials of $f$.
\end{example}

As evidence for the conjecture, we observe that it holds for the classes of polynomials we have studied. 
\begin{lemma}
	Psd-stable binomials satisfy Conjecture \ref{conj:support}.
\end{lemma}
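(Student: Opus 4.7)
The plan is to appeal directly to the complete classification of psd-stable binomials from Theorem~\ref{th:bin-class-dia} and verify Conjecture~\ref{conj:support} in each of the two resulting cases. Since the classification leaves only two shapes for a psd-stable binomial, the proof reduces to short case analyses with essentially no combinatorial search required.

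In case~(a) of Theorem~\ref{th:bin-class-dia}, the binomial $f$ involves only diagonal variables, so every monomial in $\supp(f)$ is already a diagonal monomial. For any $Z^\beta \in \supp(f)$ I would simply choose $Z^\alpha := Z^\beta$; the empty sequence of steps trivially certifies the conjecture, since there is nothing to traverse.

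In case~(b), $f(Z) = Z^\gamma(c_1 z_{ii}z_{jj} + c_2 z_{ij}^2)$ with $Z^\gamma$ a diagonal monomial and $i<j$. The monomial $Z^\gamma z_{ii}z_{jj}$ is itself diagonal and serves as its own target via the empty sequence. For the remaining monomial $Z^\gamma z_{ij}^2$, the plan is to exhibit a single transposition step that reaches $Z^\gamma z_{ii}z_{jj}$: writing $z_{ij}^2 = z_{ij}\cdot z_{ij}$ and applying the transposition that exchanges $z_{ij}z_{kl}$ with $z_{ik}z_{jl}$ for the choice $k=i,\, l=j$ replaces $z_{ij}^2$ by $z_{ii}z_{jj}$, so the step takes $Z^\gamma z_{ij}^2$ to $Z^\gamma z_{ii}z_{jj} \in \supp(f)$. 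The step lands inside the support and strictly decreases the distance to the target (from $|\alpha-\beta| = 4$ to $0$ in the norm on exponent matrices), so both conditions of the conjecture are met.

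There is essentially no obstacle once Theorem~\ref{th:bin-class-dia} is in hand: the heavy lifting is the classification itself, and case~(b) is structurally tailored to exactly accommodate a single transposition step. The only subtlety worth noting in writing up the proof is that the transposition step is to be understood as a general monomial move on $\C[Z]$ (as introduced before Lemma~\ref{le:det-transp}) and does not require the polynomial to be a determinant.
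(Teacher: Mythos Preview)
Your proposal is correct and follows essentially the same approach as the paper: invoke the classification Theorem~\ref{th:bin-class-dia}, note that case~(a) is trivial since all monomials are already diagonal, and in case~(b) observe that a single transposition step takes $Z^\gamma z_{ij}^2$ to the diagonal monomial $Z^\gamma z_{ii}z_{jj}$. Your write-up is in fact a bit more explicit than the paper's (spelling out the empty sequence in case~(a) and verifying the distance decrease in case~(b)), but the argument is the same.
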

\begin{proof}
By Theorem~\ref{th:bin-class-dia}, $c_\alpha z_{ii}z_{jj}+ c_\beta z_{12}^2$ is the only irreducible psd-stable binomial involving off-diagonal variables. Clearly, it is exactly one transposition step between the both monomials.
\end{proof}
\begin{lemma}
	Psd-stable homogeneous non-mixed polynomials satisfy Conjecture~\ref{conj:support}.
\end{lemma}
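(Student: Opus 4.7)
The plan is to use Theorem~\ref{th:hom-non-mix} to restrict attention to psd-stable homogeneous non-mixed polynomials of degree at most~$2$. Diagonal monomials (including nonzero constants, by convention) trivially satisfy the conjecture via the empty sequence of steps, so only off-diagonal monomials need work.

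In degree~$1$, write $f = \sum_i c_i z_{ii} + \sum_{i<j} d_{ij} z_{ij}$. For any off-diagonal monomial $z_{ij}$ appearing in $f$, Theorem~\ref{th:struc-thm} ensures that $z_{ii}$ also appears in $f$. The double step multiplying by $z_{ii} z_{ij}^{-1}$ then takes $z_{ij}$ directly to the diagonal monomial $z_{ii}$, decreasing the distance and staying inside the support.

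In degree~$2$, Theorem~\ref{th:psd-stable-non-mixed} yields the canonical form $f = \sum_{\alpha \in A} c_\alpha Z^\alpha + \sum_{i<j} c_{ij} z_{ij}^2$. For an off-diagonal monomial $z_{ij}^2$ with $c_{ij} \neq 0$, I would apply the transposition step with indices $(i,i,j,j)$, which multiplies $z_{ij}^2$ by $z_{ii}z_{jj}/z_{ij}^2$ and so sends $z_{ij}^2$ directly to $z_{ii}z_{jj}$ in one move, collapsing the distance from $4$ to $0$. This succeeds precisely when $z_{ii}z_{jj}$ lies in the support of $f$, and establishing that containment is the real content of the proof.

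I would show this last point by contradiction, chaining preservers with the binomial classifications. Assume $c_{ij} \neq 0$ but the coefficient of $z_{ii}z_{jj}$ in $f$ is zero. Minorization (Lemma~\ref{le:psd-preserv-general}~c)) applied to $J=\{i,j\}$ yields the psd-stable polynomial $f(Z_J) = c_{ii} z_{ii}^2 + c_{jj} z_{jj}^2 + c_{ij} z_{ij}^2$ on $\sym^{\C}_2$. If both $c_{ii}$ and $c_{jj}$ are nonzero, diagonalization (Lemma~\ref{le:psd-preserv-general}~a)) gives the stable binomial $c_{ii} z_{ii}^2 + c_{jj} z_{jj}^2$, whose support $\{2e_i, 2e_j\}$ does not fit any configuration allowed by Theorem~\ref{th:binom-class-stable}. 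If exactly one of $c_{ii}, c_{jj}$ is zero, then $f(Z_J)$ becomes a psd-stable binomial containing an off-diagonal variable whose monomials share no common factor and match neither case of Theorem~\ref{th:bin-class-dia}. If both vanish, $f(Z_J) = c_{ij} z_{ij}^2$ vanishes at $Z = i I_2$, contradicting psd-stability directly. The main obstacle is identifying minorization and diagonalization as the correct chain of reductions; once that is done, the three subcases are dispatched quickly by the existing binomial results.
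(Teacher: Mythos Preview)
Your proof is correct and follows essentially the same route as the paper: reduce to degree $\le 2$ via Theorem~\ref{th:hom-non-mix}, handle degree~$1$ by a single double step (using Theorem~\ref{th:struc-thm} to locate a diagonal target), and in degree~$2$ minorize to $J=\{i,j\}$ and argue that the coefficient of $z_{ii}z_{jj}$ in $f(Z_J)$ must be nonzero.

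The only difference is cosmetic. The paper first applies Theorem~\ref{th:struc-thm} to $f(Z_J)$ to conclude that both $z_{ii}$ and $z_{jj}$ occur, which immediately forces $c_{ii},c_{jj}\neq 0$ once the $z_{ii}z_{jj}$ coefficient is assumed zero; it then dispatches that single remaining case by observing that $f_\diag(Z_J)=c_{ii}z_{ii}^2+c_{jj}z_{jj}^2$ has support $\{(2,0),(0,2)\}$, which violates the Two-Steps Axiom (Theorem~\ref{th:braenden-js}). You instead treat all three subcases explicitly, appealing to Theorems~\ref{th:binom-class-stable} and~\ref{th:bin-class-dia} rather than the jump system axiom directly. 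Both arguments are short and amount to the same contradiction.
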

\begin{proof} Let $f$ be a psd-stable homogeneous non-mixed polynomial. If $f$ does not involve off-diagonal monomials, the claim follows from the jump system property of usual stable polynomials. Thus assume that $f$
involves off-diagonal variables. We have $d:=\deg(f)\leq 2$ by Theorem~\ref{th:hom-non-mix}. In the case of $d=1$ there is a double step between every two monomials of $f$, thus assume $d=2$ and let $c_\beta Z^\beta$ be an off-diagonal monomial of $f$. By Theorem~\ref{th:psd-stable-non-mixed}, $c_\beta Z^\beta$ is of the form $c_{jk}z_{jk}^2$ for some $j\neq k$. Let $J=\{j,k\}$, then $f(Z_J)$ is psd-stable by Lemma~\ref{le:psd-preserv-general} c). By the structure Theorem~\ref{th:struc-thm}, $f(Z_J)$ is of the form
	\begin{equation*}
		f(Z_J)=c_1z_{jj}^2+c_2z_{jj}z_{kk}+c_3z_{kk}^2+c_{jk}z_{jk}^2 
	\end{equation*}
	with $c_k\in\C$ such that $z_{jj}$ and $z_{kk}$ both appear. We claim that $c_2 \neq 0$.
	Assuming $c_2=0$ gives $c_1,c_3\neq 0$. Reducing $f(Z_J)$ to the diagonal contradicts the jump system property and thus we obtain $c_2\neq 0$. Therefore, the monomial $c_2z_{jj}z_{kk}$ appears in $f(Z_J)$ and hence also in $f(Z)$. Thus, it is a transposition step from $c_{jk}z_{jk}^2$ to the corresponding diagonal monomial $c_2z_{jj}z_{kk}$. 
\end{proof}
\begin{lemma}
	Psd-stable polynomials of determinants satisfy Conjecture \ref{conj:support}.
\end{lemma}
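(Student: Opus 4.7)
The plan is to exploit the strong structural restrictions provided by Theorem~\ref{th:mat-size-poly-of-det}. Write $f$ in standard form $f(Z)=\det(Z)^\gamma\tilde f(Z)=\sum_{\beta\in B}c_\beta\prod_i\det(Z_i)^{\gamma_i+\beta_i}$. By that theorem, every block $Z_i$ appearing in $\tilde f$ has size $d_i\le 2$. Blocks of size $1$ contribute only a diagonal variable $z_{ii}^{\gamma_i+\beta_i}$ and can be ignored; the work concentrates on the $2\times 2$ blocks, where $\det(Z_i)=z_{p_ip_i}z_{q_iq_i}-z_{p_iq_i}^2$ for an appropriate index pair $(p_i,q_i)$.

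The first step is to decode the monomial structure of $f$. Expanding each $\det(Z_i)^{\gamma_i+\beta_i}$ by the binomial theorem shows that a monomial of $f$ is uniquely labeled by $\beta\in B$ together with an integer $\ell_i\in\{0,1,\ldots,\gamma_i+\beta_i\}$ for every $2\times 2$ block, contributing the factor $(z_{p_ip_i}z_{q_iq_i})^{\gamma_i+\beta_i-\ell_i}z_{p_iq_i}^{2\ell_i}$. Since distinct values of $\beta$ force distinct total degrees in each block, there is no cancellation between the contributions coming from different $\beta$'s; the coefficient of the monomial indexed by $(\beta,(\ell_i))$ is $c_\beta\prod_i\binom{\gamma_i+\beta_i}{\ell_i}(-1)^{\ell_i}$, which is non-zero for every $\beta\in B$ and every admissible tuple $(\ell_i)$.

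Given an arbitrary monomial $Z^\mu$ in the support, with label $(\beta,(\ell_i))$, I would take as target the diagonal monomial $Z^\alpha$ with the same $\beta$ and all $\ell_i$ replaced by $0$; this is in the support since its coefficient is $c_\beta\neq 0$. To pass from $Z^\mu$ to $Z^\alpha$, for each $2\times 2$ block with $\ell_i>0$ I apply $\ell_i$ transposition steps within that block, each one multiplying by $z_{p_ip_i}z_{q_iq_i}z_{p_iq_i}^{-2}$. This is precisely the transposition step of Lemma~\ref{le:det-transp} corresponding to the indices $(a,b,c,d)=(p_i,p_i,q_i,q_i)$. Each such step decreases $\ell_i$ by $1$ while keeping $\beta$ and the remaining $\ell_j$ unchanged, so the intermediate monomial stays in the support; moreover, the exponent matrix changes by $-1$ at the $(p_i,q_i)$ and $(q_i,p_i)$ entries and by $+1$ at the $(p_i,p_i)$ and $(q_i,q_i)$ entries, reducing the distance to $\alpha$ in the $|\cdot|$-norm by exactly $4$.

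I do not anticipate a serious obstacle: Theorem~\ref{th:mat-size-poly-of-det} restricts the block sizes so severely that the combinatorics of the support reduces to the binomial expansion of a single $2\times 2$ determinant, which is the simplest non-trivial instance of Lemma~\ref{le:det-transp}. The only point that needs careful verification is the absence of coefficient cancellation across different $\beta\in B$, which follows because the total $z$-degree in the variables of block $i$ determines $\gamma_i+\beta_i$, hence $\beta_i$, uniquely. Once this bookkeeping is in place, the sequence of transposition steps described above furnishes the required path from $Z^\mu$ to a diagonal monomial.
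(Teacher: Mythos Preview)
Your approach differs from the paper's and has a genuine gap. The paper does not invoke Theorem~\ref{th:mat-size-poly-of-det} at all; it argues directly that any monomial of $f$ lies in a single determinantal term $c_\gamma\det(Z)^\gamma$ and hence factors as a product, over the blocks, of monomials of the individual $\det(Z_j)$. Lemma~\ref{le:det-transp} is then applied inside each block to walk every such factor to the diagonal monomial by transposition steps. This works for blocks of arbitrary size and, notably, never uses psd-stability.

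Your argument only treats blocks of size at most $2$, but Theorem~\ref{th:mat-size-poly-of-det} restricts only those blocks that actually appear in $\tilde f$. A block $Z_i$ with $\beta_i=0$ for every $\beta\in B$ may have size $d_i\ge 3$ and still contributes the common factor $\det(Z_i)^{\gamma_i}$ to every monomial of $f$. Concretely, take $Z_1$ a $3\times 3$ block and $Z_2$ a $1\times 1$ block; the polynomial $f=\det(Z_1)\,(\det(Z_2)+1)$ is psd-stable, its standard form has $\gamma=(1,0)$ and $\tilde f=\det(Z_2)+1$, and the monomial $2\,z_{12}z_{13}z_{23}\cdot z_{44}$ lies in the support. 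Your labeling $(\beta,(\ell_i)_{d_i=2})$ cannot distinguish this monomial from the diagonal one $z_{11}z_{22}z_{33}\cdot z_{44}$, and the binomial expansion gives no transposition path inside the $Z_1$-block. To repair this you must handle the large blocks in $\det(Z)^\gamma$ exactly as the paper does, via Lemma~\ref{le:det-transp}; once that is in place, the detour through Theorem~\ref{th:mat-size-poly-of-det} and the explicit binomial bookkeeping are no longer needed.
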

\begin{proof}
	Every monomial $Z^\beta$ in a polynomial of determinants $f$ belongs to a determinantal monomial $\det(Z)^\gamma$ and thus is a product of monomials $Z^{\beta_j}$ (with multiplicities $\gamma_j$) belonging to determinantal blocks $\det(Z_j), \ 1\leq j \leq k$. Let $Z^{\alpha_j}$ be the diagonal monomial of block $\det(Z_j)$. By Lemma~\ref{le:det-transp} there is a sequence of transposition steps from $Z^{\beta_j}$ to $Z^{\alpha_j}$ which never leaves the support of $\det(Z_j)$ for all $1\leq j \leq k$. Concatenation of these sequences (with multiplicities $\gamma_j$) gives a sequence of transposition steps from $Z^\beta$ to the diagonal monomial $Z^\alpha$ of $\det(Z)^\gamma$ which never leaves the support of $f$. 
\end{proof}

Another class of psd-stable polynomials which satisfy Conjecture~\ref{conj:support} are the psd-stable lpm polynomials
introduced in \cite{blek-et-al-2021}, which are
polynomials of the form $f(Z)=\sum_{J\subseteq [n]} c_J\det(Z_J)$, where $Z_J$ is the square submatrix of $Z$ with index set $J$. Indeed,  every monomial belongs to a square minor of $Z$, and since every minor has a different index set, there is no cancellation of monomials in the sum. Thus for each summand the Lemma \ref{le:det-transp} holds and it holds for the whole polynomial as well.

\bibliography{bibstable}
\bibliographystyle{plain}

\end{document}